\theoremstyle{definition} 
\newtheorem{definition}{Definition}[section]
\theoremstyle{plain} 
\newtheorem{proposition}[definition]{Proposition}
\newtheorem{lemma}[definition]{Lemma}
\newtheorem{theorem}[definition]{Theorem} 
\newtheorem{convention}[definition]{Convention}
\newtheorem{corollary}[definition]{Corollary}
\theoremstyle{remark} 
\newtheorem{remark}[definition]{Remark}
\newtheorem*{Reader's guide}{Reader's guide}
\title[marked horizontal separatrices]{Moduli space of meromorphic differentials with marked horizontal separatrices}
\author{Corentin Boissy}
\address{
Université de Toulouse, CNRS, Institut Mathématique de Toulouse, UMR5219,  UPS IMT, F-31062 Toulouse Cedex 9, France}
\email{corentin.boissy@math.univ-toulouse.fr}
\subjclass[2000]{Primary: 32G15. Secondary: 30F30, 57R15}
\keywords{Meromorphic differentials, translation surfaces, Moduli spaces}
\date{\today}
\begin{document}
\begin{abstract}
We study framed translation surfaces corresponding to meromorphic differentials on compact Riemann surfaces, for which a horizontal separatrix is marked for each pole or zero. Such geometric structures naturally appear when studying flat geometry surfaces ``near'' the Deligne-Mumford  boundary.

We compute the number of connected components of the corresponding strata, and give a simple topological invariant that distinguishes them. In particular we see that for $g>0$, there are at most two such components, except in the hyperelliptic case.
\end{abstract}

\maketitle

\section{Introduction}

A nonzero holomorphic one-form ({Abelian differential}) on a compact Riemann surface naturally defines a flat metric with conical singularities on this surface. Geometry and dynamics on such flat surfaces, in relation to geometry and dynamics on the corresponding moduli space of Abelian differentials is a very rich topic and has been widely studied in the last 30 years. It is related to interval exchange transformations, billards in polygons, Teichmüller dynamics. 

A non-compact translation surface corresponds to a one-form on a non-compact Riemann surface. The dynamics and geometry on some special cases of non-compact translation surfaces have been studied more recently. 


In \cite{B:merom}, we have investigated the case of translation surfaces that come from meromorphic differentials defined on compact Riemann surfaces. In this case, we obtain non-compact translation surfaces with infinite area.
Such structures naturally appear when studying compactifications of strata of the moduli space of Abelian differentials. For instance, Eskin, Kontsevich and Zorich  \cite{EKZ}, based on results of Rafi \cite{Rafi}, showed
 that when a sequence of Abelian differentials $(X_i,\omega_i)$ converges to a boundary point in the Deligne-Mumford compactification, then subsets $(Y_{i,j},\omega_{i,j})$ corresponding to thick components of the $X_i$, after suitable rescaling converge to meromorphic differentials (see \cite{EKZ}, Theorem~10). Similar results were independently proved by Grushevsky and Krichever \cite{GruKri}, by Koch and Hubbard \cite{KH} and by Smillie. See also \cite{BCG+}.

 In this paper, a meromorphic differential on a compact Riemann surface will be called \emph{translation surface with poles}, or simply translation surface when there is no confusion with the usual (compact) translation surfaces.
 
 This work was suggested to the author by Smillie, as a step in a project of constructing a geometric compactification of the strata of the moduli space of Abelian differentials by using only flat geometry. A (compact) translation surface ``near'' the boundary, should be seen as a collection of translation surfaces with poles, glued together suitably after cutting out a neighborhood of a collection of singularities (including all the poles, in order to obtain in the end a compact translation surface). However, the gluing operation requires some extra combinatorial data, that can be expressed in terms of a ``frame'' on the translation surfaces with poles.

As in \cite{B:labeled}, a framed translation surface is a translation surface with a choice, for each singularity of a horizontal separatrix (see Section~\ref{mod:space:framed} for a precise definition). When the singularity is a conical singularity (\emph{i.e.}, a zero of the corresponding one-form), it corresponds to a horizontal separatrix. When the singularity corresponds to a non-simple pole,  it corresponds to an equivalence class of horizontal geodesics going to infinity for the flat metric. A singularity of degree $n\in \mathbb{Z}$ will have $|n+1|$ possible choices of horizontal separatrices. Such a framed translation surface will be also called a translation surface with marked horizontal separatrices.

The number of connected components of the moduli space of framed (compact) translation surfaces was computed by the author in \cite{B:labeled}. In this paper, we answer the same question for the moduli space of framed translation surfaces with poles. 

The first theorem deals with the case of nonhyperelliptic connected components in genus at least 1. 
\begin{theorem}\label{MT:g}
Let $g\geq 1$. Let $\mathcal{H}$ be a stratum of the moduli space of genus $g$ meromorphic differentials, and $\mathcal{C}\subset \mathcal{H}$ be a nonhyperelliptic connected component. Let  $\mathcal{H}_\mathcal{C}^{hor}$ be the moduli space of translation surfaces in $\mathcal{C}$ with marked horizontal separatrices.  We assume that the set of poles does not consists of a pair of simple poles. We have:
\begin{itemize}
\item If there exists a simple pole, or if there are only even degree singularities, then $\mathcal{H}_\mathcal{C}^{hor}$ is connected.
\item Otherwise, $\mathcal{H}_\mathcal{C}^{hor}$ has two connected components that are distinguished by the invariant $Sp$ defined in Section~\ref{topo:inv}.
\end{itemize}

When the set of poles consists of a pair of simple poles, we have the following result.
\begin{itemize}
\item If there are only even degree zeroes, then $\mathcal{H}_\mathcal{C}^{hor}$ is connected.
\item Otherwise, $\mathcal{H}_\mathcal{C}^{hor}$ has two connected components that are distinguished by the invariant $Sp$ defined in Section~\ref{topo:inv}.\end{itemize}
\end{theorem}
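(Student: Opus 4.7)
The plan is to analyze the natural finite covering $\pi\colon \mathcal{C}^{hor}\to \mathcal{C}$, whose fiber over a point $(X,\omega)$ has cardinality $\prod_\alpha |n_\alpha+1|$ (simple poles contributing the trivial factor $1$). The number of connected components of $\mathcal{C}^{hor}$ is then exactly the number of orbits of the monodromy action of $\pi_1(\mathcal{C},(X,\omega))$ on the fiber, so the entire problem reduces to understanding which permutations of marked separatrices are realized by continuous loops in $\mathcal{C}$.

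The first step is to produce explicit surgery loops in $\mathcal{C}$ and compute their action on the fiber. I would use two complementary families. The local family consists of surgeries performed near a single singularity $\alpha$: break the singularity into two of compatible degree, drag one around the other along a small loop, and recombine. As in the compact case treated in \cite{B:labeled}, these loops produce cyclic shifts on the marked separatrix at $\alpha$, but the elementary building blocks give shifts by $2$ and not by $1$. The transport family consists of surgeries where a satellite singularity released by a break-up travels around a distinct singularity before being reabsorbed; these moves transfer odd shifts from one singularity to another. Combined, the two families should reduce the monodromy analysis to a single global parity.

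The candidate invariant is $\sigma=\sum_\alpha s_\alpha \pmod 2$, summed over all singularities of odd degree (equivalently, those with an even number of separatrices), where $s_\alpha\in\Z/(n_\alpha+1)\Z$ denotes the index of the marked separatrix. The argument would then proceed by three verifications: $\sigma$ is preserved by every surgery loop above; when every singularity has even degree, $\sigma$ is the empty sum and the surgery groupoid acts transitively, giving connectedness; and when a simple pole is present, one can exhibit an additional surgery---a cut-and-reglue along a closed horizontal curve encircling the simple pole, rebalanced by an associated twist at a chosen other singularity---that flips $\sigma$ by $1$. This handles all cases in the first bullet. In the remaining generic case (no simple pole, at least one odd-degree singularity), the same machinery produces exactly two orbits, distinguished by $\sigma$.

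The special case of a set of poles consisting of exactly two simple poles requires separate treatment because the cut-and-reglue surgery around a simple pole now pairs the two simple poles and produces no net effect on $\sigma$. Reworking the analysis with only the surgeries available among the zeros (plus the constrained simple-pole pair) shows that $\sigma$ restricted to the odd-degree zeros remains the sole monodromy invariant, yielding connectedness precisely when all zeros have even degree. The main obstacles I anticipate are two. First, verifying that each surgery path stays inside the chosen nonhyperelliptic component $\mathcal{C}$; this is where the exclusion of hyperelliptic components enters and where extra care is needed in strata supporting spin structures. Second, establishing that $\sigma$ is a coordinate-free topological invariant, which requires fixing a canonical ``base'' separatrix at each singularity and checking that the residual ambiguity of this choice lies in the image of the local shift-by-$2$ surgeries.
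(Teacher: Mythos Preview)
Your overall architecture---reduce to the monodromy group $Mon\subset Hor=\prod_P\mathbb{Z}/h_P\mathbb{Z}$, produce shift-by-$2$ moves $\sigma_P=2\delta_P$ via local bubbling and transport moves $\tau_{P,Q}$ via break-up/recombine, and argue that these force $[Hor:Mon]\le 2$---is exactly the paper's strategy. Your handling of the simple-pole case is more complicated than necessary: in the paper one simply observes that if $P$ is a simple pole then $h_P=0$, so $\tau_{P,Q}=\deg(P)\,\delta_Q=-\delta_Q$ lies in $Mon$ for every $Q$, and $Mon=Hor$ immediately. No cut-and-reglue surgery around the cylinder is needed.

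The real gap is your invariant. Your $\sigma=\sum_{\alpha\text{ odd}} s_\alpha\pmod 2$ is a well-defined function on a single fiber $Hor$ once a base point is chosen, and it is indeed constant on $Mon$-cosets; that suffices to show $[Hor:Mon]\ge 2$. But it is \emph{not} a function on $\mathcal{C}^{hor}$: the integer $s_\alpha$ only makes sense relative to a labelling of the separatrices at $\alpha$ by $\mathbb{Z}/(n_\alpha+1)\mathbb{Z}$, and there is no canonical such labelling. Your proposed fix---``fixing a canonical base separatrix and checking the residual ambiguity lies in the shift-by-$2$ subgroup''---cannot work, because any continuous choice of base separatrix over $\mathcal{C}$ would already trivialize the covering. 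The paper's invariant $Sp$ is genuinely different and not a sum of local labels: one fixes an ordered pairing $(P_j^-,P_j^+)$ of the odd-degree singularities, chooses simple arcs $\gamma_j$ from $P_j^-$ to $P_j^+$ disjoint from a symplectic homology basis $(\alpha_i,\beta_i)$, defines $ind(\gamma_j)\pmod 2$ using the marked separatrices at the endpoints, and sets
\[
Sp=\sum_i(ind(\alpha_i)+1)(ind(\beta_i)+1)+\sum_j ind(\gamma_j)\pmod 2.
\]
The Arf-invariant term involving the symplectic basis is what makes $Sp$ independent of all choices; this global input is precisely what your purely local $\sigma$ is missing.

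A secondary point you flag but underestimate: verifying that each surgery loop stays inside the given nonhyperelliptic component $\mathcal{C}$ is the bulk of the work. The paper needs a separate existence result (zero-residue poles in every component), a lemma that $\tau_{P,Q}$ with $p\ne q$ never lands in a hyperelliptic component, and a lengthy case analysis in genus one (by rotation number) and genus $\ge 2$ (by spin parity) to show that $\sigma_P$ can be realized in every nonhyperelliptic component. Your sketch does not engage with this.
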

The topological invariant $Sp$ that distinguishes the connected components $\mathcal{H}_\mathcal{C}^{hor}$ is a variation of the classical Arf invariant for moduli space of Abelian differentials, and is therefore easily computable in terms of the flat structure.

The case of hyperelliptic connected components is easy and studied in Section~\ref{hyp:case}. In this case, there are more connected components for $\mathcal{H}_\mathcal{C}^{hor}$ due to the extra symmetry of the surfaces.

The genus zero case is particular: there might be many more components, as described in the following theorem.
\begin{theorem}\label{MT:g0}
Let $\mathcal{H}=\mathcal{H}(n_1,\dots n_r)$ be a stratum of genus zero translation surfaces. Let  $\mathcal{H}^{hor}$ be the moduli space of translation surfaces in $\mathcal{H}$ with marked horizontal separatrices.
Let 
$$N=\prod_{i,j} \gcd\left(\{n_k\}_{k\notin \{i,j\}} \cup \{n_i+1,n_j+1\}\right)$$
\begin{itemize}
\item If there exists $i\in \{1,\dots ,r\}$ such that $n_i=-1$, then $\mathcal{H}^{hor}$ is connected.
\item If all $n_i$ are different from $-1$ and if there are at most two odd degree singularities, then there are $N$ connected components of $\mathcal{H}^{hor}$ that are distinguished by the invariant $\Phi$ defined in Section~\ref{zero:genus}.
\item Otherwise, there are $2N$ connected components  of $\mathcal{H}^{hor}$ that are distinguished by the invariant $(\Phi,Sp)$.
\end{itemize}
\end{theorem}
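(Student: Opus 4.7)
The plan is to analyze the forgetful covering $p \colon \mathcal{H}^{hor} \to \mathcal{H}$, whose fiber over a given surface is $F = \prod_{i : n_i \neq -1} \mathbb{Z}/(n_i+1)\mathbb{Z}$, since a simple pole carries no marked horizontal separatrix. The genus zero strata $\mathcal{H}(n_1,\dots,n_r)$ are known to be connected by \cite{B:merom}, so the connected components of $\mathcal{H}^{hor}$ are in bijection with the orbits of the monodromy representation $\pi_1(\mathcal{H}) \to \mathrm{Sym}(F)$. The theorem therefore reduces to computing that orbit set explicitly.

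My first step is to identify a generating set of loops in $\pi_1(\mathcal{H})$ whose action on the horizontal labels can be tracked. The natural candidates are the surgeries used in \cite{B:merom}: the \emph{breaking} moves, which split a singularity into two pieces along a short saddle connection, move the pieces slightly apart, and merge them back, and the \emph{sliding} moves, which carry one singularity along a closed path encircling a chosen subset of the others. In genus zero, every simple closed curve on the sphere separates the singularities into two groups, and sliding along such a curve induces a shift of the labels of the enclosed singularities by an amount computed from the total cone angle swept out, namely from $\sum_{k \in I}(n_k+1)$ where $I$ indexes the enclosed set.

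From these moves I extract the subgroup $\Lambda \subset F$ they generate and show that $|F/\Lambda| = N$. This is where the $\gcd$ formula enters: fixing two non-simple singularities $i, j$ and using sliding and breaking to annihilate the labels at the remaining positions, one reduces to a computation inside $\mathbb{Z}/(n_i+1) \times \mathbb{Z}/(n_j+1)$, where the available shifts are generated by the integers $n_k$ for $k \notin \{i,j\}$ together with $n_i+1$ and $n_j+1$; the resulting cokernel has order exactly $\gcd\bigl(\{n_k\}_{k\notin\{i,j\}} \cup \{n_i+1,n_j+1\}\bigr)$. When a simple pole $n_i = -1$ is present, the corresponding factor of $F$ disappears and all divisibility obstructions running through the index $i$ become trivial, which immediately yields the unconditional connectedness claimed in the first bullet.

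The extra factor of $2$ in the third bullet will come from a parity invariant analogous to the spin invariant in the compact case treated in \cite{B:labeled}. I would define a $\mathbb{Z}/2$-valued function as a weighted parity $\sum_i \varepsilon_i a_i \bmod 2$, with weights $\varepsilon_i$ depending on the parity of $n_i$, check invariance under every generating surgery by a direct local calculation, and verify nontriviality by exhibiting two explicit representatives from polygonal gluings. Under the hypothesis that at most two singularities have odd degree, a parity count forces this invariant to already be a function of the $\Lambda$-orbit, collapsing the extra factor. The step I expect to be most delicate is the precise bookkeeping of how the breaking and sliding surgeries act on labels at singularities of distinct degrees, since this is exactly where the arithmetic behind $N$, and the parity obstruction separating ``at most two odd'' from ``three or more odd'', both get pinned down.
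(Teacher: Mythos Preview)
Your overall framework---analyzing the monodromy of $\mathcal{H}^{hor}\to\mathcal{H}$ on the fiber $\prod_i\mathbb{Z}/(n_i+1)\mathbb{Z}$---matches the paper, but the proposal is a plan, not a proof, and the central reduction has a real gap. You claim that after ``annihilating the labels at the remaining positions'' the shifts available on the pair $(i,j)$ are generated by $\{n_k\}_{k\notin\{i,j\}}\cup\{n_i+1,n_j+1\}$. This is not what the surgeries actually produce. The paper's basic move (Proposition~\ref{prop:g0}) is $\tau_{P,Q}=\deg(Q)\,\delta_P+\deg(P)\,\delta_Q$, realized by breaking a singularity and rotating; it is \emph{coupled}: applying $\tau_{P_i,P_k}$ to shift label $i$ by $n_k$ simultaneously shifts label $k$ by $n_i$, destroying the ``annihilated'' state at $k$. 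Untangling this coupling is the nontrivial arithmetic content. The paper shows (Lemma~\ref{elements:g0}) that the single-factor elements one can actually manufacture are $n_l(n_l+1)\,\delta_{P_k}$ and $2n_ln_m\,\delta_{P_k}$, and then runs a prime-by-prime $\gcd$ computation to identify the subgroup they generate as $\varepsilon_k\prod_{l\neq k}N_{kl}$, with $\varepsilon_k\in\{1,2\}$ encoding exactly the parity obstruction. Your sketch asserts the final $\gcd$ without this work. You also implicitly assume the orbit count factors as $\prod_{i<j}N_{ij}$; the paper gets this via a Chinese-remainder map $\Psi$, which works only because the $N_{ij}$ are pairwise coprime---a fact you neither state nor use.

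The parity invariant is left undefined (``I would define\dots''). The paper's $Sp$ is not a linear form $\sum\varepsilon_i a_i$ in the labels but a geometric quantity built from indices of arcs joining paired odd-degree singularities (Section~\ref{topo:inv}), and its well-definedness takes two lemmas. More structurally, the paper's architecture is a two-sided squeeze: the invariants $\Phi$ and $Sp$ give an upper bound on the index of $Mon$ via surjectivity (Lemma~\ref{lem:Phi:surj}), and the explicit elements of Lemma~\ref{elements:g0} give a matching lower bound on $|Mon|$. Your outline has neither direction nailed down, and in particular never addresses why the loops you describe generate the full monodromy image.
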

The topological invariant $\Phi$ in the above theorem is easily computable in terms of the flat structure. The idea is to look at indices of the Gauss map  modulo relevant integers for a certain collection of paths.

\subsection*{Structure of the paper}
The paper is organized as follows:
\begin{itemize}
\item Section~\ref{sec:flat:merom} is devoted to generalities and background about translation surfaces with poles. The classification theorem of the connected components of moduli space of meromorphic differentials by the author is recalled, and few important statements about the structure of these connected components. We end with the proof of a preliminary result about the existence, in each connected component, of a surface with a pole of prescribed degree and zero residue.
\item Section~\ref{mod:space:framed} gives the precise definition of the moduli space of framed meromorphic differentials, and reduces the problem to the computation of the index of a subgroup $H$ of a product of cyclic groups.
\item Section~\ref{elem:moves} describes paths in the underlying stratum that produces some particular elements  in $H$ that will be ultimately proven to be the  generators of $H$. One key step there is to show that these elements exist for each connected component of each stratum. 
\item Section~\ref{pos:genus} defines first a topological invariant for the positive genus case, then proves Theorem~\ref{MT:g}.
\item Section~\ref{zero:genus} defines a topological invariant for the zero genus case, then proves Theorem~\ref{MT:g0}.
\end{itemize}

\subsubsection*{Acknowledgements}
I thank John Smillie for motivating the work on this paper and interesting discussions. This work is partially supported by the ANR Project "GeoDym".

\section{Preliminaries}\label{sec:flat:merom}

\subsection{Holomorphic one-forms and flat structures}
Let $X$ be a Riemann surface and let $\omega$ be a holomorphic one-form. For each $z_0\in X$ such that $\omega(z_0)\neq 0$, integrating $\omega$ in a neighborhood of $z_0$ gives local coordinates whose corresponding transition functions are translations, and therefore $X$ inherits a flat metric, on $X\backslash \Sigma$, where $\Sigma$ is the set of zeroes of $\omega$.

In a neighborhood of an element of $\Sigma$, such metric admits a conical singularity of angle $(k+1)2\pi$, where $k$ is the order of the corresponding zero of $\omega$. Indeed, a zero of order $k$ is given locally, in suitable coordinates by $\omega=(k+1)z^k dz$. This form is precisely the pre-image of the constant form $dz$ by the ramified covering $z\to z^{k+1}$. In terms of the flat metric, it means that the flat metric defined locally by a zero of order $k$ appears as a connected covering of order $k+1$ over a flat disk, ramified at zero.

When $X$ is compact, the pair $(X,\omega)$, seen as a smooth surface with such translation atlas and conical singularities, is usually called a \emph{translation surface}.

If $\omega$ is a meromorphic differential on a compact Riemann surface $\overline{X}$, we can consider the translation atlas defined by $\omega$ on $X=\overline{X}\backslash \Sigma'$, where $\Sigma'$ is the set of poles of $\omega$. We obtain a translation surface with infinite area. We will call such a surface a \emph{translation surface with poles}, or simply a \emph{translation surface}. 

\begin{convention}
When speaking of a translation surface with poles $S=(X,\omega)$: the surface $S$ equipped with the flat metric is noncompact; the underlying Riemann surface $X$ is a punctured surface and $\omega$ is a holomorphic one-form on $X$; the corresponding closed Riemann surface is denoted by $\overline{X}$, and $\omega$ extends to a meromorphic differential on $\overline{X}$ whose set of poles is precisely $\overline{X}\backslash X$.
\end{convention}

As in the case of Abelian differentials, a \emph{saddle connection} is a geodesic segment that joins two conical singularities (or a conical singularity to itself) with no conical singularities on its interior. 

We fix some terminology, that we will use during this paper.
\begin{itemize}
\item The order, or degree of a zero of $\omega$ is defined as usual. The cone angle at a zero of degree $n$ is $2\pi(n+1)$.
\item The order of a pole of $\omega$ is defined as usual. It is a \emph{positive} integer.
\item A singularity of $(X,\omega)$ is a zero or a pole of $\omega$. By convention, the \emph{degree} of the singularity $P$ will correspond to  its order if $P$ is a zero, or the opposite of its order if $P$ is a pole. For instance, a pole of order 2  corresponds to a singularity of degree -2. We denote by $\deg(P)\in \mathbb{Z}$ the degree of $P$. 
\end{itemize}


With the above convention, we  recall that it is well known that $\sum_{i=1}^r n_i=2g-2$, where $\{n_1,\dots ,n_r\}$ is the set (with multiplicities) of the degree of the singularities of $(X,\omega)$.

\subsection{Local model for poles}
 The neighborhood of a pole in $\overline{X}$ of order one is an infinite cylinder with one end. Indeed, up to rescaling, the pole is given in local coordinates by $\omega=\frac{1}{z}dz$. Writing $z=e^{z'}$, we have $\omega=dz'$, and $z'$ is in an infinite cylinder.

Now we describe the flat metric in a neighborhood of a pole in $\overline{X}$ of order $k\geq 2$ (see also \cite{strebel, B:merom}).
First, consider the meromorphic 1-form on $\mathbb{C}\cup \{\infty \}$ defined on $\mathbb{C}$ by $\omega=z^k dz$. Changing coordinates $w=1/z$, we see that this form has a pole $P$ of order $k+2$ at $\infty $, with zero residue. In terms of the translation structure, a neighborhood of the pole is obtained by taking an infinite cone of angle $(k+1)2\pi$ and removing a compact neighborhood of the conical singularity. Since the residue is the only local invariant for a pole of order k, this gives a local model for a pole with zero residue.

Now, define $U_R=\{z\in \mathbb{C}| |z|>R\}$ equipped with the standard flat metric.
Let $V_R$  be the Riemann surface obtained after removing from $U_R$ the ${\pi}$--neighborhood of the real half line $\mathbb{R}^-$, and identifying by the translation $z\to z+\imath 2\pi$ the lines $-\imath {\pi}+\mathbb{R}^-$ and $\imath {\pi}+\mathbb{R}^-$. The surface $V_R$ is naturally equipped with a holomorphic one-form  $\omega$ coming from $dz$ on $V_R$. We will show that this one-form has a pole of order~2 at infinity and residue -1.
Start from the one-form on $U_{R'}$ defined by $(1+1/z)dz$ and integrate it. Choosing the usual determination of $\ln(z)$ on $\mathbb{C}\backslash \mathbb{R}^-$, 
one gets the map $z\to z+\ln(z)$ from $U_{R'}\backslash \mathbb{R}^-$ to $\mathbb{C}$, which extends to an injective holomorphic map $f$ from $U_{R'}$ to $V_R$, if $R'$ is large enough. We claim that $f$ is also surjective around infinity, \emph{i.e.} for $Z\in V_R$ with large enough modulus, there exists $z\in U_{R'}$ with $f(z)=Z$.  Indeed, considering $Z$ as an element of $\mathbb{C}$, we consider the map $g(z)=Z-\ln(z)$ which is contracting. Choosing $\rho=2\ln(|Z|)$ we consider the ball $B$ of center $Z$ and radius $\rho$. If $B$ intersects $\mathbb{R}^-$, we consider $B^+=B\cap\{Im(z)>0\}$ if $Im(Z)>0$ (or $B^-=B\cap\{Im(z)<0\}$ if $Im(Z)<0$). Then, if $|Z|$ is greater than a constant large enough, we see that $g(B)\subset B$ ($g(B^{\pm})\subset B^{\pm}$ in the relevant cases), hence there is a fixed point of $g$ in $\overline{B}$ and therefore an element $z$ satisfying $z+\ln(z)=Z$. 

Furthermore, the pullback by $f$ of the form $\omega$ on $V_R$ gives $\omega'=(1+1/z)dz$. Then, the change of coordinate $w=1/z$ gives us that $(U_R,\omega')$ has a pole of order two at infinity with residue -1. Hence it is also the case for $(V_R,\omega)$.

Let $k\geq 2$. The pullback of the form $(1+1/z)dz$ by the map $z\to z^{k-1}$ gives $((k-1)z^{k-2}+(k-1)/z)dz$, \emph{i.e.} we get at infinity a pole of order $k$ with residue $-(k-1)$. In terms of the  flat metric, a neighborhood of a pole of order $k$ and residue $-(k-1)$ is just the natural cyclic $(k-1)$--covering of $V_R$. Then, suitable rotation and rescaling gives the local model for a pole of order $k$ with a nonzero residue.


\subsection{Moduli space}

If $(X,\omega)$ and $(X',\omega')$ are such that there is a biholomorphism $f:X\to X'$ with $f^* \omega'=\omega$, then $f$ is an isometry for the metrics defined by $\omega$ and $\omega'$. Even more, for the local coordinates defined by $\omega,\omega'$, the map $f$ is in fact a translation. 

As in the case of Abelian differentials, we consider the moduli space of meromorphic differentials, where $(X,\omega)\sim (X',\omega')$ if there is a biholomorphism $f:X\to X'$ such that $f^* \omega'=\omega$.  A stratum corresponds to prescribed degree of zeroes and poles. We denote by $\mathcal{H}(n_1^{\alpha_1},\dots ,n_r^{\alpha_r})$ the \emph{stratum} that corresponds to meromorphic differentials with $\alpha_i$ singularities of degree $n_i$. Such stratum is nonempty if and only if $\sum_{i=1}^r \alpha_i n_i=2g-2$ for some integer $g\geq 0$ and if there is not just one simple pole. 

We define the topology on this space in the following way: a small neighborhood of $S$, with conical singularities $\Sigma$, is defined to be the equivalence classes of surfaces $S'$ for which there is a differentiable injective map $f:S\backslash V(\Sigma)\to S'$ such that $V(\Sigma)$ is a (small) neighborhood of $\Sigma$, $Df$ is close the identity in the translation charts, and the complement of the image of $f$ is a union of disks. One can easily check that this topology is Hausdorff.

\subsection{Connected components of the moduli space of meromorphic differentials}
The connected components of the moduli space of meromorphic differentials were classified by the author in \cite{B:merom}. Here we recall this classification, and state some technical facts that appear in the proof, and that are necessary for this paper. First, recall the well known fact that any stratum of genus zero meromorphic differentials is connected since it corresponds more or less to a moduli space of marked points on the sphere.

Let $\gamma$ be a simple closed curve parametrized by the arc length on a translation surface that avoids the singularities. Then $t\to \gamma'(t)$ defines a map from $\mathbb{S}^1$ to $\mathbb{S}^1$. We denote by $Ind(\gamma)$ the index of this map.

Assume that the surface has genus one. Let $(a,b)$ be a pair of closed curves  representing a symplectic basis of the homology of $S$, then we define the \emph{rotation number} of $S$ as 
$$rot(S)=\gcd(Ind(a),Ind(b),n_1,\dots n_r,p_1,\dots ,p_s)$$
where $n_1,\dots ,n_r$ are the order of zeroes of $S$ and $p_1,\dots ,p_s$ are the degree of poles of $S$. We can show that it does not depend on the choice of $(a,b)$ and hence is an invariant of connected components. We have the following result.

\begin{theorem}\label{th:cc:g1}
Let $\mathcal{H}(n_1,\dots ,n_r,p_1,\dots ,p_s)$, with $n_i>0$, $p_j<0$ and   $\sum_{j}p_j<-1$ be a stratum of genus one meromorphic differentials. Let $d$ be a positive divisor of $N=\gcd(n_1,\dots ,n_r,p_1,\dots ,p_s)$. There is a unique connected component of $\mathcal{H}(n_1,\dots ,n_r,p_1,\dots ,p_s)$ with rotation number $d$, except when $r=s=1$ and $d=N$, in which case such a component does not exists.
\end{theorem}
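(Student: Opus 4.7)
My plan is to establish the theorem by proving three complementary statements: existence (every $d\mid N$ outside the excluded case is realized by some surface in the stratum), uniqueness (any two surfaces with the same rotation number lie in the same connected component), and non-realization in the exceptional case $r=s=1$, $d=N$. The well-definedness of $\mathrm{rot}(S)$ as a component invariant is stated just before the theorem, and I will assume it.

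For existence, I would produce representatives via local surgeries applied to a flat torus $(\mathbb{C}/\Lambda, dz)$, whose basis cycles $a,b$ satisfy $\mathrm{Ind}(a)=\mathrm{Ind}(b)=0$. Two surgery types suffice: \emph{bubbling}, which inserts into a small disk a local model carrying one zero and one pole whose orders are multiples of $d$; and \emph{splitting a zero of order $n$} into two zeros of orders $n',n''$ with $n'+n''=n$. Iterating these to realize the prescribed order data, while keeping every introduced order a multiple of $d$, gives a surface whose rotation number is divisible by $d$. To pin the rotation number down to exactly $d$ and not a proper multiple, I would twist one of the surgeries (for instance, by a half-period of the underlying torus) so that the resulting $\gcd$ of indices and orders is precisely $d$.

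For uniqueness, I would adopt a normal-form strategy: show that any surface in $\mathcal{H}$ with rotation number $d$ can be connected, by a path inside $\mathcal{H}$, to one of the model surfaces built above. The main tool is a family of continuous paths that collide singularities of matching degrees, successively simplifying the stratum data until a minimal configuration is reached in which the component structure is transparent. The principal obstacle I anticipate is verifying that every such collision path preserves the rotation number, i.e.\ that the indices $\mathrm{Ind}(a),\mathrm{Ind}(b)$ change only by multiples of $d$ under each deformation. This reduces to a local check on the boundary curve of each surgery region, which is divisible by $d$ because every surgery involves only singularities whose orders are multiples of $d$.

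For the exceptional case $r=s=1$, $d=N=n$, I would derive a contradiction from a Gauss--Bonnet-type identity on the twice-punctured torus. Cutting along a symplectic basis $a,b$ produces a topological quadrilateral containing both singularities; a total-turning computation around its boundary combines the indices $\mathrm{Ind}(a),\mathrm{Ind}(b)$ with the known indices $n+1$ and $1-n$ of small loops around the zero and the pole. Working modulo $n$, this identity forces a nonzero residual congruence that is incompatible with $n\mid\mathrm{Ind}(a)$ and $n\mid\mathrm{Ind}(b)$, ruling out rotation number $d=n$ and completing the proof.
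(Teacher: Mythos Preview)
This theorem is not proved in the present paper; it is recalled from \cite{B:merom}. The strategy indicated there (and summarized in Section~\ref{sec:flat:merom}) runs opposite to yours: one starts from genus-zero strata, bubbles handles to reach minimal genus-one strata, and then breaks up the zero, rather than starting from a smooth flat torus and inserting singularities. Your existence and uniqueness sketches are plausible outlines but too vague to assess --- in particular, ``inserting a local model carrying one zero and one pole'' into a flat torus is not one of the standard surgeries and would need a precise definition together with a computation of the resulting rotation number.

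The concrete gap is in your non-realization argument for $\mathcal{H}(n,-n)$ with $d=n$. A total-turning computation on the fundamental quadrilateral gives \emph{no} constraint on $\mathrm{Ind}(a),\mathrm{Ind}(b)$ modulo $n$: the boundary is the commutator $aba^{-1}b^{-1}$, and the turning along $a$ cancels exactly against that along $a^{-1}$ (likewise for $b$), so the boundary index depends only on the corner angles and the enclosed singularity degrees, which here sum to $n+(-n)=0$. You recover the identity $1=1$ and nothing more; no ``nonzero residual congruence'' appears. A correct argument is of a different nature. Let $\omega_0$ be a nowhere-vanishing holomorphic $1$-form on the underlying elliptic curve and set $f=\omega/\omega_0$, a meromorphic function with divisor $nP-nQ$. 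For $\omega_0$-geodesic representatives of $a,b$ one has $\mathrm{Ind}_\omega(a)$ equal to the winding number of $f\circ a$ around $0$, and similarly for $b$. If both were divisible by $n$, then every branch of $f^{1/n}$ would have trivial monodromy on the twice-punctured torus (the local monodromies at $P$ and $Q$ are already trivial because $f$ has order exactly $n$ there), so $f$ would be a global $n$-th power. That would yield a meromorphic function on the elliptic curve with divisor $P-Q$, forcing $P=Q$, a contradiction. Your Gauss--Bonnet computation cannot substitute for this.
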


A translation surface $S=(X,\omega)$ is \emph{hyperelliptic} if the underlying Riemann surface is hyperelliptic, \emph{i.e.} there is an involution $i$ such that $X/i$ is the Riemann sphere, and if $\omega$ satisfies $i^*\omega=-\omega$. 

Assume that the translation surface $S$ has only even degree singularities $S\in \mathcal{H}(2n_1,\dots ,2n_r,2p_1,\dots ,2p_s)$. Let $(a_i,b_i)_{i\in \{1,\dots ,g\}}$ be a collection of simple closed curves representing a symplectic basis of the homology of $S$. We define the \emph{spin structure} of $S$ as 
$$\sum_{i=1}^g (ind(a_i)+1)(ind(b_i)+1) \mod 2.$$

It is an invariant of connected components of the moduli space of meromorphic differentials. When the surface $S$ has only a pair of poles that are simple, and with even degree zeroes, \emph{i.e.} $S$ is in the stratum $\mathcal{H}(2n_1,\dots ,2n_r,-1,-1)$, it is also possible to define a ``spin structure'' invariant by considering a surface in $\mathcal{H}(2n_1,\dots ,2n_r)$ obtained after cutting the ends of the two infinite cylinders, and gluing them together (see \cite{B:merom}).

Note that an elementary computation shows that, when a surface of genus one has only even degree singularities, then it has an even spin structure if and only if its rotation number is odd.

In the next theorem, we say that the set of poles and zeroes is:
\begin{itemize}
\item of \emph{hyperelliptic type} if the degree of the zeroes are of the kind $\{2n\}$ or $\{n,n\}$, for some positive integer $n$, and if the degree of the poles are of the kind $\{2p\}$ or $\{p,p\}$, for some negative integer $p$.
\item of \emph{even type} if the degrees of zeroes are all even, and if the degrees of the poles are either all even, or are $\{-1,-1\}$.
\end{itemize}

\begin{theorem}\label{th:cc:g2}
Let $\mathcal{H}=\mathcal{H}(n_1,\dots ,n_r,p_1,\dots ,p_s)$, with $n_i>0$, $p_j<0$ be a stratum of genus $g\geq 2$ meromorphic differentials. We have the following.
\begin{enumerate}
\item If $\sum_{i} p_i$ is odd and smaller than -2, then $\mathcal{H}$ is nonempty and connected.
\item If $\sum_i p_i=-2$ and $g=2$, then:
\begin{itemize}
\item if the set of poles and zeroes is of hyperelliptic type, then there are two connected components, one hyperelliptic, the other not (in this case, these two components are also distinguished by the parity of the spin structure).
\item otherwise, the stratum is connected.
\end{itemize}
\item If $\sum_i p_i<-2$ or if $g>2$, then:
\begin{itemize}
\item  if the set of poles and zeroes is of hyperelliptic type, there is exactly one hyperelliptic connected component, and one or two nonhyperelliptic components that are described below. Otherwise, there is no hyperelliptic component.
\item if the set of poles and zeroes is of even type, then $\mathcal{H}$ contains exactly two nonhyperelliptic connected components that are distinguished by the parity of the spin structure. Otherwise $\mathcal{H}$ contains exactly one nonhyperelliptic component.
\end{itemize}
\end{enumerate}
\end{theorem}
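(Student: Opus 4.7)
The plan is to follow the template of Kontsevich--Zorich for the holomorphic case, adapted to the meromorphic setting. The proof splits into two parts: first, showing that hyperellipticity and (where defined) the parity of the spin structure are invariants of connected components; second, showing that any two surfaces in the same stratum with the same invariants can be joined by a continuous path.

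For the invariance part, hyperellipticity is a closed condition in the stratum, and I would show it is open by exploiting that the hyperelliptic involution is determined by the flat structure through its fixed points (the Weierstrass points, which are located at specific singularities or centers of cylinders) and therefore deforms continuously. For the spin parity in the even type case, I would define as in the holomorphic setting the $\Z/2\Z$-valued function $q(\gamma) = Ind(\gamma)+1 \pmod 2$ on a suitable homology of $S$, verify it is a quadratic refinement of the intersection form, and observe that its Arf invariant is locally constant. For a pair of simple poles, the cylinder-gluing procedure recalled in the preliminaries reduces the definition to the holomorphic case, and one must check independence of the chosen cuts by a direct cobordism argument.

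The bulk of the work is the connectedness statement. I would proceed by double induction: on the genus $g$, and for fixed genus on a complexity measure such as $\sum n_i + \sum p_j$. The toolbox consists of local surgeries that stay inside the prescribed stratum: breaking up a zero of order $n$ into two zeroes of orders summing to $n$; bubbling or unbubbling a handle from a zero; and pole-specific surgeries, namely merging two simple poles into a pole of order $2$ with zero residue, splitting a higher order pole into a simple pole plus a lower order pole (paying for it with a compensating zero), and rotating or adjusting a local flat model of a pole using the preliminary result on the existence of zero-residue poles of prescribed order in each component. Each move has a predictable effect on $rot$, on hyperellipticity, and on spin parity, allowing one to reduce every surface in a given component to a normal form in a fixed base stratum.

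The main obstacle is twofold. First, the $g=2$, $\sum p_i = 2$ case is special because every genus two Riemann surface is hyperelliptic, so the geometry of the involution interacts nontrivially with the pole configuration and must be analyzed by hand; this is where the hyperelliptic-type dichotomy becomes sharpest. Second, for the pole surgeries one must carefully verify that the intermediate surfaces remain in the correct stratum and that the hyperelliptic and spin invariants behave as predicted; the subtle point is to show that whenever the stratum is \emph{not} of even type, the extra surgeries introduced by an odd-degree singularity genuinely kill the spin obstruction, and that whenever the stratum is \emph{not} of hyperelliptic type, no component carries the symmetry. These are essentially explicit constructions in each ambient stratum, and the preliminary lemma on prescribed zero-residue poles is what makes them uniformly available across all components.
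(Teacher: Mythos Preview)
This theorem is not proved in the present paper: it is recalled from the author's earlier work \cite{B:merom} as background, and the paper only summarizes the strategy used there. That summary says every connected component is obtained from a \emph{minimal} component (a single zero, all poles fixed) by repeatedly breaking up the zero, and every minimal component is obtained from a genus-zero stratum by repeatedly bubbling a handle at the zero. So the actual scheme is a reduction \emph{downward} to genus zero via handle-unbubbling and zero-collapsing, with the pole data held fixed throughout.

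Your outline is in the right spirit (Kontsevich--Zorich template, induction, tracking spin and hyperellipticity), but two points diverge from the indicated argument. First, your ``pole-specific surgeries'' (merging simple poles, splitting a higher pole into a simple pole plus a lower one, etc.) are not part of the approach: in \cite{B:merom} the poles are never touched, only the zeros are. Introducing pole surgeries would force you to control residues and stratum membership in ways the actual proof avoids entirely. Second, you invoke the zero-residue proposition (Proposition~\ref{zero:residue} here) as a tool, but in this paper that proposition is established \emph{after} the classification and its proof explicitly appeals to bubbling handles and breaking up zeros to reach every connected component---i.e.\ it relies on the very structure theorem you are trying to prove. Using it as input would be circular unless you first re-prove it independently, which is not what the paper does.
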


The proof of these theorems involve some constructions, introduced first by Kontsevich and Zorich in \cite{KoZo}. These constructions are called \emph{breaking up a zero} and \emph{bubbling a handle}. We do not give a precise definition here since we will generalize them in Section~\ref{connected:sum}, but we summarize the important properties.
\begin{itemize}
\item \emph{Breaking up a zero} is a local surgery in a neighborhood of a singularity of order $n \geq 0$ (the metric is unchanged outside that neighborhood), that replaces that singularity by a pair of singularities of order $n_1,n_2 \geq 0$, with $n_1+n_2=n$. We can show (see \cite{B:merom}) that each connected component of the moduli space of meromorphic differentials can be obtained from a connected component of a stratum of the form $\mathcal{H}(n,p_1,\dots ,p_s)$ (with $n\geq 0$ and $p_1,\dots ,p_s<0$) after successive use of that surgery. In the case that either $n_1$ or $n_2$ is zero, we just add a marked point and the metric is unchanged.
\item \emph{Bubbling a handle} is a local  surgery in a neighborhood of a singularity of order $n\geq 0$, that replaces that singularity by a singularity of order $n+2$. The genus of the surface increases by one. We can show (see \cite{B:merom}) that each minimal connected component can be obtained starting from a genus zero stratum, by using this surgery repeatedly.
\end{itemize}

\subsection{Poles with zero residues}
The geometric constructions involved in Section~\ref{elem:moves} often require the use of a pole with zero residue. Here we give a necessary and sufficient condition for a connected component of stratum to contain a surface with a pole of a given order with zero residue.

The following lemma lists some well known cases where all poles necessarily have non-zero residues.
\begin{lemma}
Let $\omega$ be a meromorphic one-form on a closed Riemann surface $S$ and $P$ be a (non-simple) pole. Then, $P$ has necessarily nonzero residue in the following two cases.
\begin{itemize}
\item $S=\mathbb{CP}^1$ and $\omega$ has exactly two poles and a zero.
\item There exists exactly one other pole, which is simple.
\end{itemize}

\end{lemma}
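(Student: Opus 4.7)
The plan is to apply the residue theorem $\sum_P \mathrm{Res}_P \omega = 0$ together with the elementary fact that a simple pole always has nonzero residue, and to treat the two items separately.

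The second item is immediate. By definition a simple pole has Laurent expansion $\frac{a_{-1}}{z} + \mathrm{hol.}$ with $a_{-1}\neq 0$, so its residue is nonzero. Since $P$ is the only other pole, the residue theorem forces $\mathrm{Res}_P \omega$ to equal the negative of the residue at the simple pole, hence to be nonzero.

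For the first item, I would apply a Möbius transformation to place the two poles at $0$ and $\infty$ of $\mathbb{CP}^1$, so that $\omega = f(z)\,dz$ with $f$ rational, regular on $\mathbb{C}^*$, and having a single zero at some $z_0 \in \mathbb{C}^*$. If the orders of the poles at $0$ and $\infty$ are $p_1$ and $p_2$, the relation $\sum \deg = 2g-2 = -2$ forces the zero to have order $n = p_1+p_2-2$, and hence
$$f(z) = C\, z^{-p_1}(z-z_0)^n \text{ for some } C \in \mathbb{C}^*.$$
The residue at $0$ is the coefficient of $z^{-1}$ in this Laurent expansion; expanding $(z-z_0)^n$ by the binomial theorem, this coefficient equals
$$C \binom{n}{p_1-1}(-z_0)^{p_2-1}.$$
Since $p_1,p_2 \geq 1$ one has $0 \leq p_1-1 \leq n$, so the binomial coefficient is a positive integer; combined with $C\neq 0$ and $z_0\neq 0$, the residue at $0$ is nonzero. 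The residue theorem then yields that the residue at $\infty$ is also nonzero, covering both choices of which pole plays the role of $P$.

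The argument is elementary and presents no real obstacle: the entire content of the first item reduces to the positivity of the binomial coefficient $\binom{n}{p_1-1}$, which is automatic from $p_1, p_2 \geq 1$.
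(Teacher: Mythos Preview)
Your proof is correct and follows essentially the same approach as the paper's: the residue theorem (Stokes) handles the second item, and for the first item one writes down the rational form explicitly and reads off the residue as a nonvanishing binomial coefficient. The only cosmetic difference is the M\"obius normalization---the paper places the poles at $0$ and $1$ and the zero at $\infty$, obtaining $\omega = z^{-p}(1-z)^{-q}\,dz$, and leaves the residue check as an exercise; you place the poles at $0$ and $\infty$ and the zero at a finite point, which makes the binomial expansion slightly cleaner to write out. Either way the content is the positivity of $\binom{p_1+p_2-2}{p_1-1}$.
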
\label{nonzero:residue}
 \begin{proof}
For the first case:  let $p$ and $q$ be the degree of the poles. We identify $\mathbb{CP}^1$  with $\mathbb{C}\cup \{\infty \}$, and can assume that $P=0$,  the other pole is $1$, and the zero of $\omega$ is at $\infty $. Then, up to a multiple constant, 
 $\omega={z^p}{(1-z)^q}dz$, and we easily check that the residue at 0 is nonzero.
 
 For the second case, the residue of a simple pole is nonzero and if $P$ is the only other pole, it has opposite residue since by Stokes theorem the sum of residues of poles is zero.
 \end{proof}

 \begin{proposition}\label{zero:residue}
Let $\mathcal{C}\subset \mathcal{H}(n_1,\dots ,n_r,p_1,\dots ,p_s)$, with $n_i>0$, $p_j<0$ be a connected component of the moduli space of meromorphic differentials. We assume that there exists $p\in \{p_1,\dots ,p_s\}$, such that $p<-1$.
We assume that we are not in the case of the previous lemma.
Then,  there exists in $\mathcal{C}$ a flat surface with a pole of degree $p$ with zero residue.
 \end{proposition}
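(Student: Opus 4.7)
The plan is to reduce the statement to a tractable base case via the two surgeries \emph{breaking up a zero} and \emph{bubbling a handle} recalled above. The key point is that each is a purely local modification in a small neighborhood of a zero of the form, so neither affects the residues at any pole. Hence, whenever two connected components of moduli space are related by such surgeries (or their inverses), one contains a surface with zero residue at a prescribed pole exactly when the other does.

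By the classification recalled in this section, $\mathcal{C}$ is obtained by successively breaking up zeros from a minimal connected component $\mathcal{C}_{\min}\subset\mathcal{H}(\sum_i n_i,-p_1,\ldots,-p_s)$, and $\mathcal{C}_{\min}$ is in turn obtained by bubbling handles from the (connected) genus zero stratum $\mathcal{H}_0=\mathcal{H}(\sum_j p_j-2,-p_1,\ldots,-p_s)$. So whenever I can exhibit a surface in $\mathcal{H}_0$ with zero residue at the distinguished pole, the proposition follows.

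For this base case I would proceed by subcases. If $s=1$, Stokes' theorem forces the unique residue to be zero. If $s\geq 3$, I would write $\omega=\frac{c(z-b)^{\tilde n}}{\prod_{j=1}^s(z-a_j)^{p_j}}\,dz$ on $\mathbb{CP}^1$ with $\tilde n=\sum_j p_j-2$, treat the vanishing of the residue at the chosen pole as one complex equation on the parameters $c$, $b$ and the pole positions $a_j$, and verify by an elementary parameter count that there are solutions with $b\neq a_j$ for all $j$. The excluded subcase $s=2$ in genus zero is precisely case~1 of the previous lemma, so in that situation the reduction via bubbling must stop before reaching genus zero.

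To handle the outstanding $s=2$ case I would not descend past genus one. Since we are not in case~2 of the lemma, both poles have order $\geq 2$; since we are not in case~1, we may assume the genus is at least one after reducing to a minimal component. It then suffices to construct, in each connected component of $\mathcal{H}(p_1+p_2,-p_1,-p_2)$, a surface with zero residue at one (hence, by Stokes, both) poles. A natural candidate is a Weierstrass-type construction: on an elliptic curve with two marked points $P_1,P_2$, any $\mathbb{C}$-linear combination of derivatives of $\wp(z-P_1)$ and $\wp(z-P_2)$ automatically has zero residues at both poles because $\wp$ is even; a parameter count then produces examples with a single zero of the required order. The main obstacle I anticipate is checking that such constructions meet every connected component of $\mathcal{H}(p_1+p_2,-p_1,-p_2)$: the components are indexed (by the classification recalled earlier) by a rotation number dividing $\gcd(p_1,p_2)$, and one must verify that each divisor can be realized as the rotation number of a surface produced by this family.
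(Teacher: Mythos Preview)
Your reduction strategy---observing that breaking up zeros and bubbling handles are local near zeros and hence leave all pole residues unchanged---is sound and is also what the paper uses. But there is a genuine gap in your handling of the two-pole case.

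You write: ``since we are not in case~1, we may assume the genus is at least one after reducing to a minimal component.'' This inference is incorrect. The hypothesis ``not in case~1'' applies to the \emph{original} component $\mathcal{C}$, not to its minimal stratum, and passing to the minimal stratum does not change the genus. If $\mathcal{C}$ is a genus-zero stratum with two non-simple poles and at least two zeros---for instance $\mathcal{H}(1,1,-2,-2)$---then $\mathcal{C}$ is not excluded by the lemma, yet its minimal stratum $\mathcal{H}(p_1+p_2-2,-p_1,-p_2)$ \emph{is} exactly the forbidden case~1 and still has genus zero. There is no ``genus one'' to stop at, and your argument says nothing about this family. The paper treats it by an explicit flat construction: start from two one-pole spheres $(\mathbb{CP}^1,z^{p_i-2}dz)$, break each zero into a pair joined by a short vertical saddle connection of the same length, and cross-glue the two slits; the result is a genus-zero surface with both residues zero and two zeros of any desired orders.

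For the genus-one two-pole case you correctly identify the real difficulty---hitting every rotation number dividing $\gcd(p_1,p_2)$---but you do not resolve it, and it is not clear your Weierstrass family does. The paper gives an explicit slit construction and computes the rotation number directly, with an ad hoc fix for the exceptional case $p_1=p_2=2$. Similarly, for $s\geq 3$ the paper avoids a parameter count (which would have to rule out degenerations such as $b=a_j$) and instead glues one-pole spheres along strips so that the distinguished pole visibly has zero residue while the others absorb the nonzero ones.
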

 \begin{proof}
The case is trivial when there is only one pole. 
In this proof, we will  assume first that there are exactly two poles of degree $p$ and $q$, (by assumption, we must have $p,q<-1$). This leads to the study of three cases, depending on the genus. Then, we will deal with the case of  at least three poles.

\emph{Case 1: two poles, genus zero}:
Since we are not in the case of the previous lemma, there are necessarily at least two zeroes. We start from $(\mathbb{CP}^1,z^{-p-2}dz)$, $(\mathbb{CP}^1,z^{-q-2}dz)$, then break the zero $P$ of the first one (resp. $Q$ of the second one) into a pair of zeroes $P_1,P_2$ of order $p_1,p_2 \geq 0$ with $p_1+p_2=-p-2$ (resp. $Q_1,Q_2$ of order $ q_1,q_2 \geq 0$ with $q_1+q_2=-q-2$), so that there is a vertical saddle connection $\gamma_1$ (resp. $\gamma_2$) of length $\varepsilon$ joining the two singularities. We obtain two surfaces $S_1$ and $S_2$. Then, cut $\gamma_1,\gamma_2$, and paste the left part of $\gamma_1$ (resp. $\gamma_2$) to the right part of $\gamma_2$ (resp. $\gamma_1$). This defines the segment $a$ (resp. $b$) in Figure~\ref{fig:2poles:noresidue:g0}. We obtain a flat surface in $\mathcal{H}(p,q,p_1+q_1+1,p_2+q_2+1)$. Choosing suitably $p_i,q_i$, we can obtain any stratum with two zeroes. Examples in the other strata are obtained from these examples by breaking up zeros.
Since each stratum in genus zero is connected, the case is proven.

\begin{figure}[htb]
\begin{tikzpicture}[scale=1]
\coordinate (vb) at (0,2);
\coordinate (mvb) at (0,-2);

\draw[dotted,fill=gray!10] (2,1)[] circle (2);
\draw[dotted,fill=gray!10] (10,1)[] circle (2);
\draw (2,0)--++(vb) node[midway,left] {$a$} node {\tiny $\bullet$} node[above] {$P_2$}--++(mvb) node[midway,right] {$b$} node {\tiny $\times$} node[below] {$P_1$};
\draw (10,0)--++(vb) node[midway,left] {$b$} node {\tiny $\bullet$} node[above] {$Q_2$} --++ (mvb) node[midway,right] {$a$}  node {\tiny $\times$} node[below] {$Q_1$}; 
\draw (0.5,1.5) node {$S_1$};
\draw (8.5,1.5) node {$S_2$};

\end{tikzpicture}
\caption{Surface of genus zero with two poles and no residue.}
\label{fig:2poles:noresidue:g0}

\end{figure}
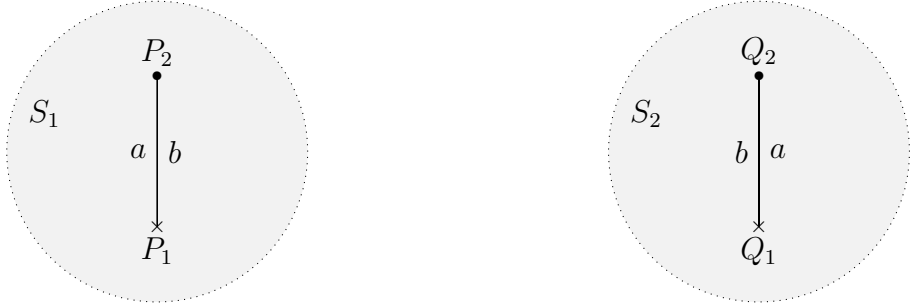

\medskip	

\emph{Case 2: two poles, genus one}:
We first build a suitable surface in any  component of the stratum $\mathcal{H}(p,q,-p-q)$ of genus one surfaces. We start from $S_0=(\mathbb{CP}^1,z^{-p-2}dz)$ and $S_2$ as previously. The surface  $S_0$ has a zero $P$ of order $-p-2$, and the surface $S_2$ has a pair of zeroes $Q_1,Q_2$ of orders $q_1,q_2$ with $q_1+q_2=-q-2$.

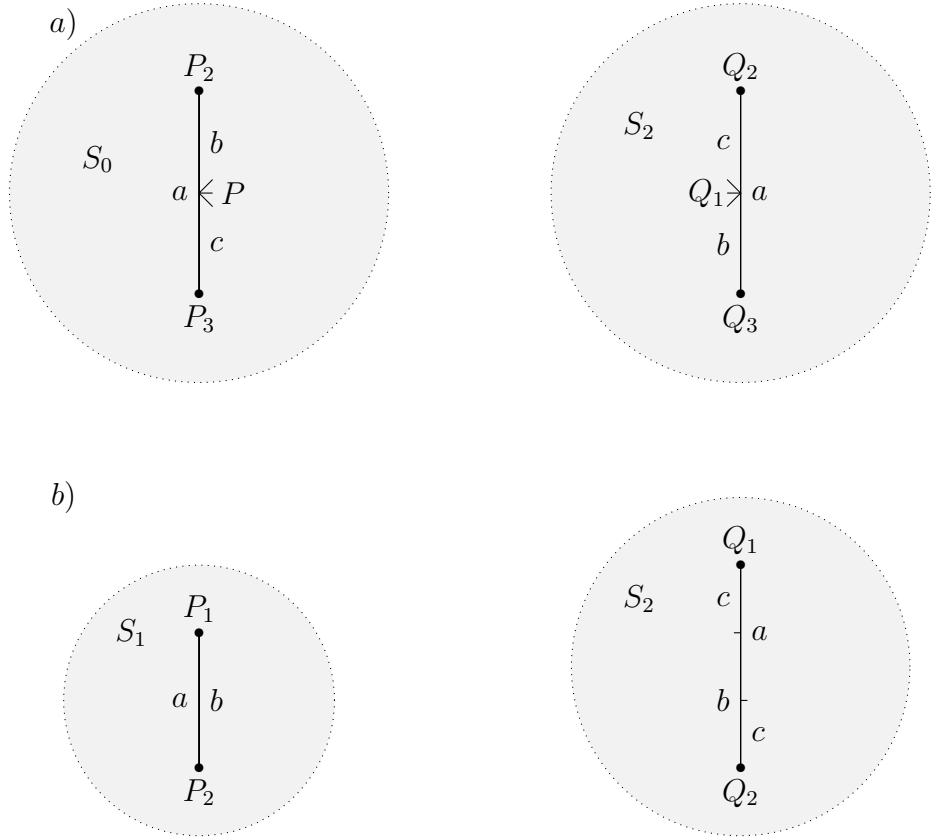
\begin{figure}[htb]
\begin{tikzpicture}[scale=0.9]
\coordinate (v) at (0,2);
\coordinate (mv) at (0,-2);
\coordinate (v1) at (0,1);
\coordinate (mv1) at (0,-1);
\coordinate (vb) at (0,3);
\coordinate (mvb) at (0,-3);
\coordinate (v1b) at (0,1.5);
\coordinate (mv1b) at (0,-1.5);

\draw[dotted,fill=gray!10] (2,1.5)[] circle (2.8);
\draw[dotted,fill=gray!10] (10,1.5)[] circle (2.8);
\draw (2,0)--++(vb) node[midway,left] {$a$} node {\tiny $\bullet$} node[above] {$P_2$}--++(mv1b) node[midway,right] {$b$} --++ (mv1b) node[midway,right] {$c$}node {\tiny $\bullet$} node[below] {$P_3$};
\draw (2.5,1.5) node {$P$};
\draw (10,0)--++(v1b) node[midway,left] {$b$} --++(v1b) node[midway,left] {$c$} node {\tiny $\bullet$} node[above] {$Q_2$} --++ (mvb) node[midway,right] {$a$}  node {\tiny $\bullet$} node[below] {$Q_3$}; 
\draw (9.5,1.5) node {$Q_1$};
\draw (9.8,1.7)--(10,1.5)--(9.8,1.5);
\draw (9.8,1.3)--(10,1.5);
\draw (2.2,1.7)--(2,1.5)--(2.2,1.5);
\draw (2.2,1.3)--(2,1.5);
\draw (0.5,2) node {$S_0$};
\draw (8.5,2.5) node {$S_2$};
\draw (0,4) node {$a)$};

\draw[dotted,fill=gray!10] (2,-6)[] circle (2);
\draw[dotted,fill=gray!10] (10,-5.5)[] circle (2.5);
\draw (2,-7)--++(v) node[midway,left] {$a$} node {\tiny $\bullet$} node[above] {$P_1$}--++(mv) node[midway,right] {$b$} node {\tiny $\bullet$} node[below] {$P_2$};
\draw (10,-7)--++(v) node[midway,left] {$b$} --++(v1) node[midway,left] {$c$} node {\tiny $\bullet$} node[above] {$Q_1$} --++ (mv) node[midway,right] {$a$}--++ (mv1) node[midway,right] {$c$} node {\tiny $\bullet$} node[below] {$Q_2$}; 
\draw (10,-5)--(9.9,-5);
\draw (10,-6)--(10.1,-6);
\draw (1,-5) node {$S_1$};
\draw (8.5,-4.5) node {$S_2$};
\draw (0,-3) node {$b)$};                   
\end{tikzpicture}
\caption{Surfaces in $\mathcal{H}(-p,-q,p+q)$.}
\label{fig:2poles:noresidue}
\end{figure}

Consider a metric segment $[P_2,P_3]$ on $S_0$, with  $P$ on its middle, and such that one of the angular sectors at $P$ defined by this segment has angle $\pi$ (see  Case $a)$ of Figure~\ref{fig:2poles:noresidue}). Similarly, we consider a segment $[Q_2,Q_3]$ on $S_2$, with $Q_1$ on its middle and the same condition on the angular sector at $Q_1$.
We remark that such segment exists, since $Q_1, Q_2$ are obtained after breaking up a singularity, and in this case, there is by construction (see \cite{KoZo}) a segment joining $Q_1$ to $Q_2$ that we can assume to be arbitrarily small. We can assume that the two segments are vertical, isometric, and with opposite orientation. Then, cutting the surfaces along these segments, and gluing them according to Figure~\ref{fig:2poles:noresidue}, one gets a surface $S$ in  $\mathcal{H}(p,q,-p-q)$ (note that the midpoint of $a$ is not  a singularity). 
 We must check that all connected components of this stratum are obtained. We first consider a basis for the homology of $S$: consider the saddle connection $\gamma_b$ corresponding to $b$, joining the unique conical singularity to itself. At the singularity, it defines a sector of angle $2\pi(1+q_2+1+(-p-2))+\pi=(-2p+2q_2+1)\pi$. We take a smooth path $\eta_b$ homotopic to $\gamma_b$ that avoids the singularity. It has index $-p+q_2$ (or $-p-q-(-p+q_2)$). Similarly, we have a smooth path $\eta_c$, homotopic to the  saddle connection corresponding to the segment $c$ with index $-p$, and $\eta_b,\eta_c$ define a symplectic basis of $S$. So the rotation number of the surface is $\gcd(q_2, -p,-q)$, with $q_2$ that can be any integer in  $\{0,\dots , -q-2\}$. If $-q>-p$, we can clearly obtain any divisor of $\gcd(p,q,-p-q)$, so we obtain any connected component. When  $p=q$, one cannot obtain in this way the component with rotation number $-p-1$. But since the rotation number must divide $p$, we are in the case $p=q=-2$. 
 In this case, we glue two Euclidean planes as in Figure~\ref{fig:2poles:noresidue}, $b)$. Here, paths $\eta_a$ and $\eta_c$ define a symplectic basis of $S$, and we see that the rotation number is $1$, since the index of $\eta_a$ is 1. Finally, once obtained any connected component of $\mathcal{H}(p,q,-p-q)$, breaking up the zero in a suitable way gives any component of any stratum of genus one with two non-simple poles.

\emph{Case 3: two poles, higher genus}:
Here, suitably bubbling handles from genus one surfaces leads to any minimal connected component in higher genus, and breaking up the zero leads to any connected component of the moduli space of meromorphic differentials.

\emph{Case 4: at least three poles}:
\begin{figure}[htb]
\begin{tikzpicture}[scale=1]
\coordinate (v) at (0,1);
\coordinate (mv) at (0,-1);

\draw[dotted,fill=gray!10] (1,1.5)[] circle (1);
\draw[dotted,fill=gray!10] (4,1.5)[] circle (1);
\draw[dotted,fill=gray!10] (8,1.5)[] circle (1);
\draw[dotted,fill=gray!10] (11,1.5)[] circle (1);
\draw (4,1)--++(v) node[midway,left] {$a_0$} node {\tiny $\bullet$} --++(mv) node[midway,right] {$a_1$} node {\tiny $\bullet$} ;
\draw (8,1)--++(v) node[midway,left] {$a_{k-1}$} node {\tiny $\bullet$} --++(mv) node[midway,right] {$a_{k}$} node {\tiny $\bullet$} ;
\fill [fill=gray!00] (-0.5,1)--(1,1) -- (1,2) -- (-0.5,2)--cycle;
\draw (0.1,1)--(1,1)  node[midway,below] {$l_0$} node  {\tiny $\bullet$}  -- (1,2) node[midway,right] {$a_0$} node  {\tiny $\bullet$} -- (0.1,2) node[midway,above] {$l_0$};
\fill [fill=gray!00] (13,1)--(11,1) -- (11,2) -- (13,2)--cycle;
\draw (11.9,1)--(11,1)  node[midway,below] {$l_{k+1}$} node  {\tiny $\bullet$} -- (11,2) node[midway,left] {$a_{k}$} node  {\tiny $\bullet$} -- (11.9,2) node[midway,above] {$l_{k+1}$};
\draw (1,0) node {$S_0$};                   
\draw (4,0) node {$S_1$};
\draw (8,0) node {$S_k$};                   
\draw (11,0) node {$S_{k+1}$};   
\draw (6,1.5) node {\dots }  ;              

\end{tikzpicture}
\caption{Surface of genus zero in $\mathcal{H}(n,p_0,\dots ,p_{k+1})$ with $k$ poles of zero residues and two poles of nonzero residues.}
\label{fig:2poles:noresidue:3p}
\end{figure}

Let $k\geq 1$, we first build a genus zero surface with $2+k$ poles of degree $p_0,\dots ,p_{k+1}$ respectively. We assume that $p_0,\dots ,p_{k+1}<-1$.
We start from spheres $S_{0},\dots ,S_{k+1}$ with exactly one pole of degree $p_0,\dots ,p_{k+1}$ respectively and one zero (of degree $-p_0-2,\dots ,-p_{k+1}-2$ respectively). 
Consider on $S_{0}$ an infinite horizontal segment $l_0$ joining the zero to the pole $P_0$ ($l_0$ is chosen so that it identifies by a translation map to the half-line $]-\infty ,0[$), then consider the half-infinite horizontal band of width $1$ with bottom side $l_0$. Cut this band, and glue together by translation the two horizontal sides. One obtains a surface, still denoted $S_0$ with a small vertical boundary component of length $1$, and the pole $P_0$ has now a  nonzero residue. We do the same for $S_{k+1}$, but starting from an half-line $l_{k+1}$ that identifies to $]0,\infty [$. Then, on each $S_{i}$, we cut along a vertical segment of length $1$, that is attached to a singularity. Then, as in Figure~\ref{fig:2poles:noresidue:3p}, we glue by translation a vertical boundary segment of $S_i$ to the corresponding one of $S_{i-1}$, and the other vertical boundary segment of $S_i$ to the corresponding one of $S_{i+1}$. This defines a (closed) flat sphere with poles $P_1,\dots ,P_k$ of zero residue, two poles $P_0,P_{k+1}$ of nonzero residue, and a single singularity of positive degree. 
The surfaces $S_0, S_{k+1}$ can be replaced without difficulties by half-infinite cylinders, hence we can have $P_0$ or $P_{k+1}$ that are simple poles.

If we want to have more simple poles, we start from a horizontal half-line  joining $P_0$ to the singularity of positive degree, then cut along this line, consider a half-infinite horizontal band, and glue together each infinite horizontal side of the band to the corresponding half-line on the surface. We obtain a translation surface with a boundary component which is a vertical segment. We glue on this segment a half-infinite cylinder so that we obtain a translation surface with no boundary component. This procedure adds a simple pole, modifies the residue of $P_0$, and leaves invariant the other residues. Repeating this procedure, we obtain as many simple poles as we want, and we only change the residue of $P_0$.

Now, suitably bubbling handles and breaking up zeroes, we obtain any connected component with at least three poles, and this does not change the residues of $P_1,\dots ,P_k$.
\end{proof}

\section{Moduli spaces of framed meromorphic differentials}\label{mod:space:framed}
As in \cite{B:labeled},  a \emph{frame} on a translation surface $S$ is a map $F_S$ from a finite alphabet $\mathcal{A}$ to a discrete combinatorial data of $S$.

For a suitable collection of frames on translation surfaces in a stratum $\mathcal{H}(n_1^{\alpha_1},\dots ,n_r^{\alpha_r})$, we define the corresponding moduli space of framed surfaces by identifying $(S,F_S)$ and $(S', F_{S'})$ if there is a translation mapping from $S$ to $S'$ that is consistent with the frames.

We are interested in two cases.
 The first case is when the alphabet $\mathcal{A}$ admits a partition $\sqcup_{i=1}^r \mathcal{A}_i$ with $|\mathcal{A}_i|=\alpha_i$ and the collection of frames we consider are all possible one-to-one maps $F_S$ from $\mathcal{A}$ to the set of singularities of $S$ such that, for all $ i$, for all $ P\in \mathcal{A}_i$, $F_S(a)$ is a singularity of $S$ of degree $n_i$. We obtain the moduli space of translation surface with named singularities $\mathcal{H}^{sing}(n_1^{\alpha_1},\dots ,n_r^{\alpha_r})$.

The following proposition will be useful.
\begin{proposition}\label{cc:marked:sing}
The connected components of $\mathcal{H}^{sing}(n_1^{\alpha_1},\dots ,n_r^{\alpha_r})$ are in one-to-one correspondence with the connected components of $\mathcal{H}(n_1^{\alpha_1},\dots ,n_r^{\alpha_r})$.
\end{proposition}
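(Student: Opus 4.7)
The plan is to study the forgetful map $\pi \colon \mathcal{H}^{sing}(n_1^{\alpha_1},\dots,n_r^{\alpha_r}) \to \mathcal{H}(n_1^{\alpha_1},\dots,n_r^{\alpha_r})$ that forgets the labeling. In the orbifold sense this is a finite covering map whose fiber above $S$ is the set of orderings of the singularities of $S$ compatible with the partition by degree; this fiber is a torsor under $G = \prod_{i=1}^r S_{\alpha_i}$. Since $\pi$ is surjective, it induces a surjection on connected components, so the content of the proposition is injectivity: for every connected component $\mathcal{C} \subset \mathcal{H}$, the preimage $\pi^{-1}(\mathcal{C})$ must be connected. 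By elementary covering theory this is equivalent to the monodromy action of $\pi_1(\mathcal{C})$ on the fiber being transitive, and since transpositions generate each $S_{\alpha_i}$, it is enough to exhibit, for each index $i$ with $\alpha_i \geq 2$ and each pair $P,Q$ of singularities of degree $n_i$ on some surface $S\in\mathcal{C}$, a loop in $\mathcal{C}$ based at $S$ whose monodromy swaps $P$ with $Q$ and fixes every other singularity.

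To construct such a loop I would first bring $P$ and $Q$ close together inside $\mathcal{C}$. For $n_i > 0$ this can be done using the ``breaking up a zero'' surgery recalled in Section~\ref{sec:flat:merom}: after deforming $S$ inside $\mathcal{C}$ one may assume that $P$ and $Q$ are joined by an arbitrarily short saddle connection $\sigma$ whose neighborhood is the local picture of a singularity of degree $2n_i$ slightly broken up. Rotating this local picture by an angle $\pi$, while leaving the rest of $S$ fixed, defines a loop along which the endpoints of $\sigma$ are exchanged, giving the required transposition. For $n_i < 0$ (two poles of the same order), the same idea applies to the local models for poles recalled in Section~\ref{sec:flat:merom}: two poles of the same order and compatible residues can be arranged to lie inside a local picture admitting a rotational symmetry of order $2$ exchanging them, and continuously turning on this rotation defines the required loop.

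The main technical point will be to verify that each of these deformations really stays inside the fixed connected component $\mathcal{C}$. For zeroes this is built into the local nature of the breaking-up surgery and essentially reproduces the argument used by the author in \cite{B:labeled} in the compact case. For poles the construction must be tailored so that the residues of the local gadget match those of $P$ and $Q$; the proof of Proposition~\ref{zero:residue} and the gluing constructions appearing there provide enough flexibility inside $\mathcal{C}$ to arrange this. Once transpositions are produced in each factor $S_{\alpha_i}$ the monodromy action is transitive, yielding the claimed bijection between the connected components of $\mathcal{H}^{sing}$ and those of $\mathcal{H}$.
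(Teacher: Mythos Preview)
Your reduction to transitivity of the monodromy of the covering $\pi$ is correct, and for \emph{zeroes} your swapping argument via breaking up a zero and rotating the local disc by half a period is essentially the mechanism the paper uses as well (through the connected-sum construction of Section~\ref{connected:sum}).

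The paper's proof, however, is organised differently and this matters precisely where your proposal is weakest. The paper treats genus zero separately (where both $\mathcal{H}$ and $\mathcal{H}^{sing}$ are connected, being configuration spaces of marked points on the sphere), and for $g\geq 1$ it collapses \emph{all} zeroes at once to reach the minimal stratum $\mathcal{H}(n,-p_1,\dots,-p_s)$, then invokes Proposition~7.2 of \cite{B:merom}. The point is that permutation of \emph{poles} is never done by a local half-rotation: it is inherited from the genus-zero base of the tower ``genus zero $\to$ minimal (bubbling handles) $\to$ general (breaking up zeroes)'', since bubbling handles and breaking up zeroes do not touch the poles.

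This is exactly where your proposal has a genuine gap. For two poles of the same order there is no bounded ``local picture'' admitting a half-rotation: a neighbourhood of a pole has infinite area and its boundary is not a metric circle along which one can cut and rotate as in Section~\ref{local:move}. One can try instead to realise the pair $P,Q$ by breaking up a single pole of order $2p$ and rotating the attached sphere in $\mathcal{H}(2p-2,-p,-p)$, but then you must (i) first deform inside $\mathcal{C}$ so that $P,Q$ have opposite residues (otherwise no order-two symmetry of the sphere piece swaps them), and (ii) show that the given component $\mathcal{C}$ is actually adjacent to the stratum with the collapsed pole. Neither step is supplied, and Proposition~\ref{zero:residue} by itself does not give (i) or (ii). The paper sidesteps both issues by the global reduction above rather than by a pairwise local construction.
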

\begin{proof}
This is clearly the case for genus zero stratum. Otherwise, we use the fact that each connected component of $\mathcal{H}^{sing}(n_1^{\alpha_1},\dots ,n_r^{\alpha_r})$ is adjacent to the minimal stratum obtained by coalescing all singularities of positive degree. Then the proof is similar to the proof of Proposition~7.2 of \cite{B:merom}. See also the connected sum construction of this paper in Section~\ref{connected:sum}.
\end{proof}

Now we define a more specific combinatorial datum for a singularity.
\begin{definition}
Let $P$ be a singularity of $S$ which is not a simple pole. A (right) \emph{horizontal separatrix} for $P$ is an equivalence class of horizontal geodesics $\gamma:]a,b[\to S$, satisfying $\gamma'=1$, $\lim_{t\to a} \gamma(t)=P$ with the following conditions:
\begin{itemize}
\item if $\deg(P)>0$: $a=0$ and $\gamma_1\sim\gamma_2$ if they coincide on a subinterval of the form $]0,\varepsilon[$.
\item if $\deg(P)<-1$:  $a=-\infty$, and $\gamma_1\sim\gamma_2$ if the distance for the euclidean metric between $\gamma_1(t)$ and $\gamma_2(t)$ is bounded as $t$ tends to $-\infty $. 
\end{itemize}
\end{definition}

\begin{remark}
For a singularity $P$ (pole or zero), the number of possible choices of horizontal separatrices is $|\deg(P)+1|$.
\end{remark}

Recall that two translation surfaces $S_1$ and $S_2$ define the same element in the moduli space if and only if  there is a one-to-one map $f:S_1\to S_2$ which is a translation. In particular such map sends a horizontal separatrix attached to $P\in S_1$ to a horizontal separatrix attached to $f(P)\in S_2$.

Now we define the second moduli space of framed meromorphic differentials. It corresponds to choosing a horizontal separatrix for each singularity. More precisely, 
we still assume that $\mathcal{A}$ admits a partition $\sqcup_{i=1}^r \mathcal{A}_i$ with $|\mathcal{A}_i|=\alpha_i$ and the collection of frames we consider are all possible maps $\tilde{F}_S$ such that:
\begin{itemize}
\item if $n_i\neq -1$ then for all $ P\in \mathcal{A}_i$, $\tilde{F}_S(P)$ is a horizontal separatrix for a singularity of degree $n_i$. 
\item if $n_i=-1$ then for all $ P\in \mathcal{A}_i$, $\tilde{F}_S(P)$ is a singularity of degree $-1$.
\item if $P\neq Q$, then the singularity corresponding to  $\tilde{F}_S(P)$ is different from the singularity corresponding to  $\tilde{F}_S(Q)$.
\end{itemize}
In particular, two framed surfaces $(S_1,\tilde{F}_1)$ and $(S_2,\tilde{F}_2)$ represent the same element in the moduli space if:

\begin{itemize}
\item there is an one-to-one map $f:S_1\to S_2$ that is a translation.
\item $f\circ \tilde{F}_{S_1}=\tilde{F}_{S_2}$, \emph{i.e.} for each $P\in \mathcal{A}$,
\begin{itemize}
\item the image of the singularity labeled $P$ of $S_1$ is the singularity labeled $P$ of $S_2$,
\item the image by $f$ of the marked horizontal separatrix of the singularity labeled $P$ of $S_1$ is corresponding  horizontal separatrix labelled $P$ of $S_2$ (when the singularity is not a simple pole). 
\end{itemize}
\end{itemize}

We obtain the moduli space of translation surfaces with marked horizontal separatrices $\mathcal{H}^{hor}(n_1^{\alpha_1},\dots ,n_r^{\alpha_r})$.

Denote by $\pi_h: \mathcal{H}^{hor}(n_1^{\alpha_1},\dots ,n_r^{\alpha_r})\to \mathcal{H}^{sing}(n_1^{\alpha_1},\dots ,n_r^{\alpha_r})$ and $\pi_s: \mathcal{H}^{sing}(n_1^{\alpha_1},\dots ,n_r^{\alpha_r})\to \mathcal{H}(n_1^{\alpha_1},\dots ,n_r^{\alpha_r})$ the coverings obtained by forgetting the horizontal separatrices, and the names of the singularities, respectively. 

Let $\mathcal{C}$ be a connected component of $\mathcal{H}(n_1^{\alpha_1},\dots ,n_r^{\alpha_r})$. From Proposition~\ref{cc:marked:sing},  $\mathcal{C}^{sing}=\pi_s^{-1}(\mathcal{C})$ is connected.  We define $\mathcal{H}_\mathcal{C}^{hor}$ the subset of $\mathcal{H}^{hor}(n_1^{\alpha_1},\dots ,n_r^{\alpha_r})$ whose underlying translation surfaces are in $\mathcal{C}$. We  have $\mathcal{H}_\mathcal{C}^{hor}=(\pi_h)^{-1}\mathcal{C}^{sing}$.
We want to compute the number of connected components of $\mathcal{H}_\mathcal{C}^{hor}$.

We choose ${S}_b^{hor}$ a base element of $\mathcal{H}_\mathcal{C}^{hor}$ and $S_b^{sing}=\pi_h(S_b^{hor})$ the corresponding flat surface (with marked singularities). By definition, for each element $S$ of $\pi_h^{-1}(S_b^{sing})$, and each element $P\in \mathcal{A}$, $\tilde{F}_S(P)$ is a horizontal separatrix attached to the same singularity as $\tilde{F}_{S_b^{hor}}(P)$.

For each singularity $P$ of $S_b^{sing}$ which is not a simple pole, the set of horizontal separatrices, once fixed the corresponding one of $S_b^{hor}$, is  identified to the cyclic group $\mathbb{Z}/ (\deg(P)+1)\mathbb{Z}$ in the following way:
\begin{enumerate}
\item if $P$ is a conical singularity, we go from a separatrix to the next one by considering a small counterclockwise arc of angle $2\pi$ around the singularity. Hence, the identification is just given by the cyclic (counterclockwise) order around $P$. 
\item if $P$ is an nonsimple pole, we go from a separatrix to the next one by considering a large counterclockwise arc of angle $2\pi$. In particular, from the point of view of the pole $P$, it corresponds to the cyclic \emph{clockwise} order around $P$.
\end{enumerate}


We consider the action of the monodromy group of the covering $\pi_h$, on the fiber $\pi_h^{-1}(S_b^{sing})$, \emph{i.e.} all the possible frames on the surface $S_b^{sing}$.

\begin{definition}
Let $Hor$ be the group $\prod_{P}\mathbb{Z}/(\deg(P)+1)\mathbb{Z},$
where the product is taken over all singularities of degree different from~$-1$. From above, $Hor$ is  identified with the fiber $\pi_h^{-1}(S_b^{sing})$. We define $Mon$ to be  the image of the element $(0,\dots ,0)\in Hor$ for the monodromy action.
\end{definition}

\begin{lemma}
The action of the monodromy group of the covering $\pi_h$ on $Hor$ corresponds to the addition of the corresponding elements of $Mon$.  In particular, $Mon$ is a subgroup of $Hor$.
\end{lemma}
\begin{proof}
 Let $(l_P)_{P} \in Hor$ and let $\gamma=(S_t)_{t\in [0,1]}$ be a closed path in $\mathcal{C}^{sing}$ with $S_0=S_b^{sing}$, and that defines a element $(x_P)_{P}\in Mon$. For each $P$, the angle between two separatrices of $P$ is preserved along $\gamma$, hence the image of the action of $\gamma$ on $(l_P)_P$ is $(l_P+x_P)_P$. 
\end{proof}

Since $\mathcal{C}^{sing}$ is connected, we can therefore identify connected components of $\mathcal{H}_\mathcal{C}^{hor}$ with cosets of $Mon$ in $Hor$. This proves the following:
\begin{corollary}
 The number of connected component of $\mathcal{H}_\mathcal{C}^{hor}$ is the index of the subgroup $Mon$ of $Hor$.
\end{corollary}

\begin{definition}
Let $Hor$ be the group defined above, and let $P$ be a singularity of the surface $S_b^{hor}$. We denote by $\delta_P$ the element in $Hor$ which is $1$ on the factor corresponding to $P$, and zero everywhere else. If $P$ is a simple pole, then $\delta_P=0$. For any singularity, we have $(\deg(P)+1)\delta_P=0$.
\end{definition}

The goal of the next  section is to prove the following two propositions, which give a collection of elements that are in $Mon$.

\begin{proposition}\label{prop:g0}
Let $S_b^{hor}$ be a framed genus zero translation surface.
 Let $P,Q$ be a pair of singularities of  $S_b^{hor}$. We have
 $$\tau_{P,Q}:=\deg(Q) \delta_P+\deg(P)\delta_Q\in Mon.$$
\end{proposition}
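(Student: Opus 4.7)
\medskip
\noindent\textbf{Proof plan.}
In genus zero, any element of $\mathcal{H}(n_1,\dots,n_r)$ has the form $\omega = c\prod_{i=1}^r(z-z_i)^{n_i}\,dz$ on $\mathbb{CP}^1$, so after fixing three of the punctures a neighborhood of $S_b^{sing}$ is parametrised by the remaining positions $z_i$ together with the $\mathbb{C}^*$-scaling $c$. The strategy is to exhibit an explicit loop in $\mathcal{C}^{sing}$ whose monodromy on the set of horizontal frames realises $\pm\tau_{P,Q}$; since $Mon$ is a subgroup of $Hor$, this is enough.

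Take $\gamma_{P,Q}$ to be the path that drags the marked singularity $P$ once counterclockwise along a small circle centred at $z_Q$, keeping all other singularities and the constant $c$ fixed. If the circle is chosen small enough that no other singularity is enclosed, this is a genuine loop in $\mathcal{C}^{sing}$ based at $S_b^{sing}$.

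To compute the induced monodromy, observe that near any singularity $R$ with $n_R := \deg(R) \neq -1$, the form reads $\omega \sim c_R(t)(z-z_R(t))^{n_R}\,dz$ with $c_R(t)=c\prod_{i\neq R}(z_R(t)-z_i(t))^{n_i}$, and by the local model of Section~\ref{sec:flat:merom} the $|n_R+1|$ horizontal separatrices at $R$ are the rays $\arg(z-z_R)\equiv -\arg c_R(t)/(n_R+1) \pmod{2\pi/(n_R+1)}$. Tracked continuously along the loop, the intrinsic cyclic label at $R$ is therefore shifted by exactly $-k_R$, where $k_R$ is the winding number of $t\mapsto c_R(t)$ around $0$. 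A direct count gives $k_P = \deg(Q)$ (from the factor $(z_P(t)-z_Q)^{n_Q}$), $k_Q = \deg(P)$ (from $(z_Q-z_P(t))^{n_P}$), and $k_R = 0$ for every other $R$, since $z_P(t)$ encircles no singularity other than $Q$. Hence the monodromy of $\gamma_{P,Q}$ equals $-\deg(Q)\delta_P - \deg(P)\delta_Q = -\tau_{P,Q}$, proving $\tau_{P,Q}\in Mon$. The cases where $P$ or $Q$ is a simple pole are absorbed by the convention $\delta_\bullet = 0$: the shift computed at the other singularity remains a genuine element of $Mon$.

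The point to treat with care is the identification of the continuously transported separatrix with its intrinsic label at the endpoint; this is settled by the explicit angle formula above, which is linear in $\arg c_R$. That $\gamma_{P,Q}$ stays inside $\mathcal{C}^{sing}$ throughout is automatic, as no collision of singularities occurs and every genus-zero stratum is connected.
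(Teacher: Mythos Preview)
Your argument is correct and takes a genuinely different route from the paper's. The paper realises $\tau_{P,Q}$ through the surgeries of Sections~\ref{local:move}--\ref{nonlocal:move}: one first collapses $P,Q$ to a single singularity of degree $p+q$ in a surface of $\mathcal{H}(p+q,n_1,\dots)$, then rotates the small flat disk produced by the breaking-up construction. Because that construction is obstructed in several situations (when the collapsed stratum forces a nonzero residue by Lemma~\ref{nonzero:residue}, when there is an extra simple pole, etc.), the paper must treat the three-singularity case separately via $\rho=\delta_P+\delta_Q+\delta_R$ and then patch the remaining exceptional configurations by algebraic combinations of already-known elements. Your braid-type loop ``drag $z_P$ once around $z_Q$'' is defined for \emph{every} pair $(P,Q)$ at once and yields $-\tau_{P,Q}$ uniformly from the winding of the leading coefficients $c_R$, so no case analysis is needed. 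In fact, when the paper's local move is available in genus zero, its rotation of the flat disk containing $P,Q$ by $(p+q+1)2\pi$ corresponds in the $z$-chart to moving the pair once around their common centre, and one checks that the winding numbers $k_R$ coincide with yours; so the two loops produce the same monodromy where both make sense.

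Two points worth making explicit. First, at a pole with nonzero residue the local expression $\omega\sim c_R(z-z_R)^{n_R}dz$ is only the leading term, and the primitive is multivalued; but your angle formula still computes the \emph{asymptotic} direction along which each horizontal geodesic approaches $z_R$, and that asymptotic direction is exactly what indexes the $|n_R+1|$ equivalence classes in the paper's definition. Second, the statement that the cyclic label shifts by $-k_R$ uses that ``$+\delta_R$'' means rotation by $+2\pi$ in the flat cone; one sees this is consistent for zeros and poles alike by testing against the global-rotation loop (winding $c$), which gives $\rho=\sum_R\delta_R$ on the nose. Neither point is a gap, but both deserve a sentence.

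The trade-off: the paper's surgery argument is designed to work in every genus and feeds directly into Proposition~\ref{prop:g}, while your argument is shorter and case-free precisely because it exploits the global product formula for $\omega$ on $\mathbb{CP}^1$, which has no analogue for $g\geq 1$.
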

Note that $\tau_{P,Q}=(\deg(P)+\deg(Q)+1)(\delta_P+\delta_Q)$.

\begin{proposition}\label{prop:g}
Let $S_b^{hor}$ be a framed translation surface of genus $g\geq 1$, such that the underlying translation surface is neither in a hyperelliptic connected component, nor in a stratum of the kind $\mathcal{H}(-1,-1,n_1,\dots ,n_r)$, with $n_i>0$.
\begin{enumerate}
\item Let $P,Q$ be a pair of singularities of $S_b^{hor}$. We assume that neither $P$ nor $Q$ is the only pole of $S_b^{hor}$.
We have
 $$\tau_{P,Q}:=\deg(Q) \delta_P+\deg(P)\delta_Q\in Mon.$$
 \item Let $P$ be a singularity of   $S_b^{hor}$. We have 
 $$\sigma_{P}:=2\delta_P\in Mon.$$
\end{enumerate}
If the underlying translation surface is in a nonhyperelliptic connected component of a stratum of the kind $\mathcal{H}(-1,-1,n_1,\dots ,n_r)$, with $n_i>0$, then the previous statement is still true if we assume that neither $P$ nor $Q$ are poles.
\end{proposition}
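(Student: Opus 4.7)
The plan is to construct, for each claimed element, an explicit closed loop in $\mathcal{C}^{sing}$ whose monodromy on horizontal separatrices realizes that element. The two basic ingredients are the connected-sum/cutting constructions that will be formalized in Section~\ref{connected:sum}, and the classification of connected components recalled in Section~\ref{sec:flat:merom}. The key global move is that a full $2\pi$ rotation of a subsurface whose boundary consists of saddle-connections shifts the horizontal separatrix at each interior singularity by exactly one, while leaving separatrices outside the subsurface untouched; this is the only calculation we will need.

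For Proposition~\ref{prop:g0}, I would first establish the claim on the "basic" genus-zero surface $(\mathbb{CP}^1, z^n\,dz)\in\mathcal{H}(n,-n-2)$. There the $SO(2)$-action gives a loop in $\mathcal{H}^{sing}$ whose monodromy on horizontal separatrices is $\delta_P+\delta_Q$, and one checks directly that $\tau_{P,Q}\equiv -(\delta_P+\delta_Q)\pmod{n+1}$ because $h_P=h_Q=n+1$ and $\deg(P)+\deg(Q)+1=-1$. For a general genus-zero surface, given any pair $(P,Q)$ one decomposes the sphere along saddle-connections into a subsurface containing only $P$ and $Q$ (plus its complement), rotates this subsurface by $2\pi$ keeping the boundary pointwise fixed, and computes that the induced shift is $\delta_P+\delta_Q$, hence (by the same arithmetic using $\tau_{P,Q}=(\deg P+\deg Q+1)(\delta_P+\delta_Q)$ in the ambient quotient) an element of $Mon$ realizing $\tau_{P,Q}$.

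For Proposition~\ref{prop:g}, two different constructions are needed.
\begin{itemize}
\item For $\sigma_P=2\delta_P$: since $g\geq 1$, the classification theorem lets us represent $\mathcal{C}$ by a surface obtained from a genus-zero piece by iterated handle-bubbling, and we can arrange (possibly after first breaking $P$ apart and then recombining) that a handle is bubbled at $P$. The attaching slits of the handle lie on two horizontal separatrices at $P$; performing a full $2\pi$ twist of the handle around $P$ gives a loop in $\mathcal{C}^{sing}$ whose monodromy shifts the separatrix at $P$ by exactly $2$ (once per slit) and fixes all other singularities. This yields $\sigma_P=2\delta_P\in Mon$.
\item For $\tau_{P,Q}$: provided neither $P$ nor $Q$ is the unique pole, Proposition~\ref{zero:residue} lets us move inside $\mathcal{C}$ to a configuration where there is a pole with zero residue, giving enough flexibility for a connected-sum decomposition in which $P$ and $Q$ sit inside a subsurface with saddle-connection boundary. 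Rotating that subsurface by $2\pi$ proceeds exactly as in the genus-zero case and realizes $\tau_{P,Q}$.
\end{itemize}

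The main obstacle, in both propositions, is to carry out these local constructions \emph{within the given connected component}, not just within the ambient stratum. This is where the exclusion of hyperelliptic components and of $\mathcal{H}(-1,-1,n_1,\dots,n_r)$ enters: the hyperelliptic involution rigidly identifies separatrices in pairs, obstructing independent shifts; and in a stratum whose only poles are the two simple ones, no pole admits zero residue, so the subsurface construction cannot be localized away from $P$ or $Q$ when one of them is a pole. Outside these cases, the combination of Proposition~\ref{zero:residue}, the structure theorems for connected components, and the two local surgeries above produces explicit loops realizing both $\tau_{P,Q}$ and $\sigma_P$ in every relevant connected component; the remaining verification is a bookkeeping of the angular shifts, which reduces to the arithmetic already performed in the basic case.
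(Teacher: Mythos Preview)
Your proposal has the right architecture---build loops by rotating a subsurface attached via connected sum or surgery, then check realizability inside each connected component---but both the local computations and the global ``realizability'' step have genuine gaps.

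\textbf{The local moves are not described correctly.} For $\tau_{P,Q}$ the relevant disc (obtained by breaking up a singularity of degree $p+q$ into $P,Q$) has boundary a $(|p+q+1|)$-fold cover of a metric circle; a single $2\pi$ rotation does \emph{not} close up as a loop in $\mathcal{C}^{sing}$. The closed loop is rotation by $2\pi(p+q+1)$, and its monodromy is $(p+q+1)(\delta_P+\delta_Q)=\tau_{P,Q}$ directly---you never obtain $\delta_P+\delta_Q$ on its own. Similarly, for $\sigma_P$ the handle is attached along a circle that is a $|n-1|$-fold cover; the closed loop is rotation by $2\pi|n-1|$, and the separatrix at $P$ moves by $n-1\equiv -2\pmod{n+1}$. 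Your ``once per slit'' explanation is not the mechanism.

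\textbf{The realizability step is the heart of the proof, and you have not done it.} The sentence ``the combination of Proposition~\ref{zero:residue}, the structure theorems \ldots\ produces explicit loops in every relevant connected component'' hides almost all of the work. Concretely: for $\sigma_P$ with $p\in\{-1,0,1\}$ there is no handle to bubble. For $p<0$ one needs a surface in $\mathcal{H}(p-2,n_1,\dots,n_r)$ with the degree-$(p-2)$ pole of zero residue, which can fail (e.g.\ when the only other pole is simple). In genus one, if the component has rotation number $n$ and $\deg(P)=\pm n$, the bubbled handle lives in $\mathcal{H}(-n,n)$, whose rotation number is \emph{never} $n$; so the construction lands in the wrong component. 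Likewise, the ``break up a singularity'' realization of $\tau_{P,Q}$ fails in specific genus-one components such as $\mathcal{H}(kn,(1-k)n,-n)$ with rotation number $n$. In all of these cases the paper produces $\sigma_P$ or $\tau_{P,Q}$ indirectly, as explicit algebraic combinations of other $\tau$'s, $\sigma$'s and the global rotation $\rho=\sum_R\delta_R$. None of this is routine bookkeeping: it is the bulk of the argument, and your outline does not touch it.
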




\section{Some elementary moves}\label{elem:moves}
\subsection{Connected sums} \label{connected:sum}
Let $S, S'$ be translation surfaces. Let $N\in S$, be a singularity of degree $n\geq 0$ and let $N'\in S'$ be a singularity of degree $n'=-2-n<0$. We assume that $N'$ has zero residue.  A pointed neighborhood $V\backslash\{N'\}$ of $N'$ is isometric to the complement of a metric disk  centered in 0, in the cone defined by the form $z^{-2-n'}dz=z^{n}dz$. Hence, after scaling (shrinking) appropriately the surface $S'$ so that this metric disk is isometric (as a translation surface) to a neighborhood $U$ of $N$, we can glue together $S\backslash U$ and $S'\backslash V$ along their boundaries to get a translation surface. This surgery is a flat version of the topological connected sum of two surfaces.
If $n\leq -2$, $n'\geq 0$, the construction is the same by reversing the roles of $S,S'$.
If $n=-1$, then  $n'=-1$, we must assume that the two simple poles have opposite residues, we obtain two half infinite cylinders with isometric waist curves. Cutting and pasting along such waist curves gives the required surface.

We are interested in some particular cases for $S'$, where it generalizes the two surgeries introduced by Kontsevich and Zorich in \cite{KoZo}, \emph{breaking up a singularity} and \emph{bubbling a handle}.

If $S'$ is a sphere with three singularities, \emph{i.e.} $S'\in H(-n-2, n_1, n_2)$, the above construction, when possible, replaces the singularity of degree $n$ by a pair of singularities of degree $n_1,n_2$ with $n=n_1+n_2$. 
\begin{itemize}
\item If $n\geq 0$ and $n_1,n_2\geq 0$, the construction is always possible and is precisely ``breaking up a singularity'' (see \cite{KoZo}). 
\item If $n\leq -1$, the construction is always possible, and breaks up the pole of degree $n$ into a pair of singularities of degree $n_1$ and $n_2$.
\item If $n\geq 0$ and either $n_1$ or $n_2$ is negative. Say $n_1< 0$ and $n_2\geq 0$.  The above construction is not possible since, $S'$ is a sphere with two poles (of respective degree $-n-2$ and $n_1$) and a zero, and in the case, the pole of degree $-n-2$ would have zero residue which would contradict Lemma~\ref{nonzero:residue}.
\end{itemize}

If $S'$ is a torus in $\mathcal{H}(-n-2,n+2)$, then the surgery adds a handle to the surface $S$, and the singularity of degree $n$ is replaced by a singularity of degree $n+2$. 
\begin{itemize}
\item For $n\geq 0$, the construction is always possible, and if we choose $S'$ that contains a cylinder, this is precisely the surgery ``bubbling a handle'' (see \cite{KoZo}).
\item For $n=-3$ or $n=-1$, $\mathcal{H}(-n-2,n+2)=\mathcal{H}(-1,1)=\emptyset$, so the construction does not make sense.
\item If $n\leq -4$ or $n=-2$,  the construction works well as soon as the pole of degree $n$ has zero residue. 
\end{itemize}

\begin{remark}
When $n'<0$ and $N'$ has nonzero residue, the above construction is not possible since the boundary of a pointed neighborhood of $N'$ is never isometric to the boundary of a neighborhood of $N$. However, once a proper rescaling (shrinking) of the surface $S'$ is done, it is possible to perform a surgery on $S$ that creates  a geodesic boundary component (``hole'') adjacent to the singularity $N$, so that the boundary of a neighborhood of $N$ becomes isometric to the boundary of a neighborhood of $N'$, making the construction doable, 
see Section~\ref{nonlocal:move}. Note that if $S$ has no poles, then due to Stokes theorem, this necessarily creates on $S$ at least one other boundary component. This idea has been continued in \cite{BCG+}.
\end{remark}


\subsection{A realization of $\tau_{P,Q}$, local case.}\label{local:move}
Consider a translation surface $S^{hor}$ with labeled horizontal separatrices. Let $P$ and $Q$ be two singularities of degree $p$ and $q$ respectively. Assume that $P,Q$ are obtained after the surgery ``breaking up a singularity'' above, with either $p,q\geq 0$, or $p+q\leq -2$, in the zero residue case. By construction, the singularities $P,Q$ are on a metric disc whose boundary is a covering of Euclidean circle. Cutting the surface along the circle and rotating the disc by an angle $\theta$, one gets a family of surfaces $(S_\theta)$. For $\theta=2\pi(p+q+1)$, one has $S_\theta=S$. Keeping track of the marked horizontal separatrices, we see at the end that the marked horizontal separatrices for $P,Q$ have changed by an angle $2\pi(p+q+1)$, and the horizontal separatrices of the other singularities have not changed. 

Now assume that $S_b^{hor}$ is in the same connected component as a translation surface $S^{hor}$ as above. Then, conjugating the above transformation with a path joining $S_b^{hor}$ to $S^{hor}$ gives the element $(p+q+1)(\delta_P+\delta_Q)=q \delta_P+p\delta_Q=\tau_{P,Q}$
 in $Mon$ (recall that $(p+1)\delta_P=(q+1)\delta_Q=0$).

\subsection{A realization of $\tau_{P,Q}$, nonlocal case.}\label{nonlocal:move}

The above transformation fails if $p+q\geq 0$ and either $p$ or $q$ is negative. 

Here we describe a (nonlocal) surgery that produces the same effect on the set of separatrices. We must first describe a way to do a connected sum  with a surface in $\mathcal{H}(p,q,-2-p-q)$. The idea is to make a ``hole'' (\emph{i.e.} a geodesic boundary component) adjacent to the singularity of degree $p+q$. The transformation is then obtained by continuously rotating the hole by an angle $2\pi(p+q+1)$.

We start from a surface $S_0$ in the stratum obtained by coalescing $P$ and $Q$. We assume that this is not a stratum of holomorphic differentials. We can assume that $S_0$ does not have any vertical saddle connections. Then, it is obtained by the \emph{infinite zippered rectangle construction}. We refer to  \cite{B:merom}, Section~3.3 for a precise construction, and present an example here (see Figure~\ref{ex:infinite:zipp}). Note that in this figure, the parameters $z_1,\dots ,z_4$ have positive real part and may take any value satisfying this condition, and $l_1,\dots ,l_4$ are horizontal half-lines.

\begin{figure}[htb]
\begin{tikzpicture}[scale=0.4]
\coordinate (z1) at (1,0.5);
\coordinate (z2) at (1,-1.5);
\coordinate (z3) at (2,2.5) ;
\coordinate (z4) at (1.5,-1) ;
\fill [fill=gray!20] (0,0)--++(3.5,0) --++ (z1) --++ (z2) --++(3.5,0) --++(0,4)--++(-9,0)--cycle;

\draw (0,0)--++(3.5,0) node[midway,above] {$l_1$} node {\tiny $\bullet$}
                     --++ (z1) node[midway,above] {$z_1$} node {\tiny $\bullet$}
                     --++ (z2) node[midway,above right] {$z_2$} node {\tiny $\circ$}
                     --++(3.5,0) node[midway, above] {$l_2$};
                     
\draw [dashed] (0,0) --++(1,0);                     

\fill [fill=gray!20] (12,-1)--++(3.5,0) --++ (z3) --++ (z4) --++(3.5,0) --++(0,2.5)--++(-10.5,0)--cycle;

\draw (12,-1)--++(3.5,0) node[midway,above] {$l_3$} node {\tiny $\circ$}
                     --++ (z3) node[midway,above left] {$z_3$} node {\tiny $\circ$}
                     --++ (z4) node[midway,above] {$z_4$} node {\tiny $\bullet$}
                     --++(3.5,0) node[midway, above] {$l_4$};
                     
\fill [fill=gray!20] (0,-3)--++(3.5,0) --++ (z2) --++ (z3) --++(2.5,0) --++(0,-4)--++(-9,0)--cycle;

\draw (0,-3)--++(3.5,0)node[midway,below] {$l_1$} node {\tiny $\bullet$}
                     --++ (z2) node[midway,below left] {$z_2$} node {\tiny $\circ$}
                     --++ (z3) node[midway,below right] {$z_3$} node {\tiny $\circ$}
                     --++(2.5,0) node[midway, below] {$l_2$};
                     
\fill [fill=gray!20] (12,-2)--++(3.5,0) --++ (z4) --++ (z1) --++(4.5,0) --++(0,-3.5)--++(-10.5,0)--cycle;

\draw (12,-2)--++(3.5,0)node[midway,below] {$l_3$} node {\tiny $\circ$}
                     --++ (z4) node[midway,below left] {$z_4$} node {\tiny $\bullet$}
                     --++ (z1) node[midway,below right] {$z_1$} node {\tiny $\bullet$}
                     --++(4.5,0) node[midway, below] {$l_4$};

\draw [dashed] (4.5,0.5) --++ (0,2) node[midway, above left] {$l$};          
                     
\end{tikzpicture}
\caption{Infinite zippered rectangle representation of a surface in  $\mathcal{H}(-2,-2,2,2)$.}
\label{ex:infinite:zipp}
\end{figure}
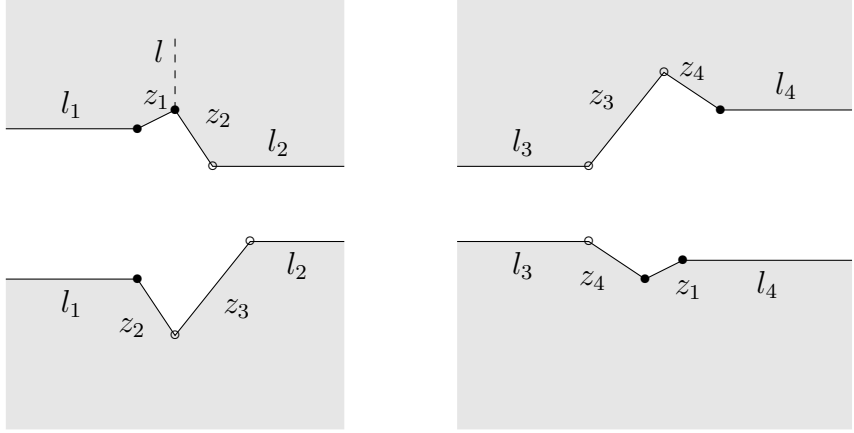

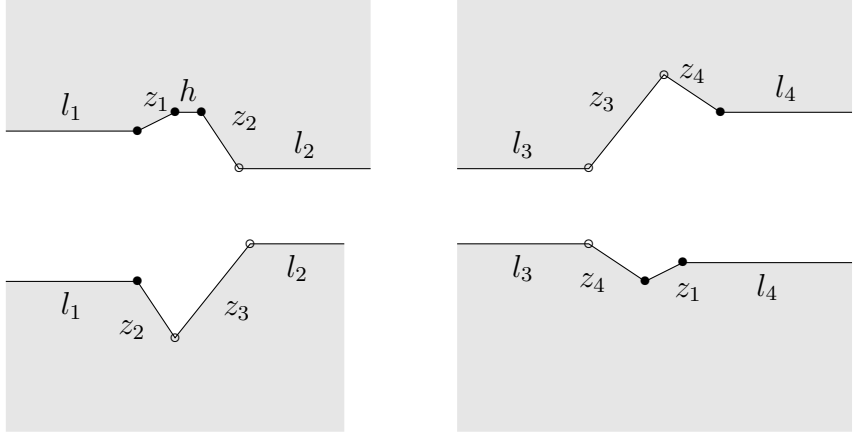
\begin{figure}[htb]
\begin{tikzpicture}[scale=0.4]
\coordinate (z1) at (1,0.5);
\coordinate (z2) at (1,-1.5);
\coordinate (z3) at (2,2.5) ;
\coordinate (z4) at (1.5,-1) ;
\coordinate (h) at (0.7,0);
\fill [fill=gray!20] (0,0)--++(3.5,0) --++(z1) --++ (h) --++ (z2) --++(3.5,0) --++(0,4)--++(-9.7,0)--cycle;

\draw (0,0)--++(3.5,0) node[midway,above] {$l_1$} node {\tiny $\bullet$}		 
                     --++ (z1) node[midway,above] {$z_1$} node {\tiny $\bullet$}
                       --++ (h) node[midway,above] {$h$} node {\tiny $\bullet$}
                     --++ (z2) node[midway,above right] {$z_2$} node {\tiny $\circ$}
                     --++(3.5,0) node[midway, above] {$l_2$};                    

\fill [fill=gray!20] (12,-1)--++(3.5,0) --++ (z3) --++ (z4) --++(3.5,0) --++(0,2.5)--++(-10.5,0)--cycle;

\draw (12,-1)--++(3.5,0) node[midway,above] {$l_3$} node {\tiny $\circ$}
                     --++ (z3) node[midway,above left] {$z_3$} node {\tiny $\circ$}
                     --++ (z4) node[midway,above] {$z_4$} node {\tiny $\bullet$}
                     --++(3.5,0) node[midway, above] {$l_4$};
                     
\fill [fill=gray!20] (0,-3)--++(3.5,0) --++ (z2) --++ (z3) --++(3.2,0) --++(0,-4)--++(-9.7,0)--cycle;

\draw (0,-3)--++(3.5,0)node[midway,below] {$l_1$} node {\tiny $\bullet$}
                     --++ (z2) node[midway,below left] {$z_2$} node {\tiny $\circ$}
                     --++ (z3) node[midway,below right] {$z_3$} node {\tiny $\circ$}
                     --++(3.2,0) node[midway, below] {$l_2$};
                     
\fill [fill=gray!20] (12,-2)--++(3.5,0) --++ (z4) --++ (z1) --++(4.5,0) --++(0,-3.5)--++(-10.5,0)--cycle;

\draw (12,-2)--++(3.5,0)node[midway,below] {$l_3$} node {\tiny $\circ$}
                     --++ (z4) node[midway,below left] {$z_4$} node {\tiny $\bullet$}
                     --++ (z1) node[midway,below right] {$z_1$} node {\tiny $\bullet$}
                     --++(4.5,0) node[midway, below] {$l_4$};

                     
\end{tikzpicture}
\caption{Same surface as before, after creating a hole.}
\label{hole:1}
\end{figure}


We choose a vertical separatrix $l$ adjacent to a singularity of degree $p+q$  (the dashed line in the above picture). Then, insert an horizontal saddle connection as in Figure~\ref{hole:1}. This creates a hole on the surface orthogonal to the direction $l$.  This resulting surface, for a suitable rescaling and a parameter $h$ small enough, can be glued as in Section~\ref{connected:sum} to a flat sphere  $S_1$ in $\mathcal{H}(p,q,-2-p-q)$, where the pole of degree $-2-p-q$ has suitable residue.

Now, we can continuously rotate the segment $h$ by an angle $\pi$ and obtain the surface with a hole that would be obtained from the separatrix $l'$ obtained after rotating $l$ by~$\pi$. 
This operation is described in Figure~\ref{hole:rotating}.

\begin{figure}[htb]
\begin{tikzpicture}[scale=0.5]
\coordinate (z1) at (1,0.5);
\coordinate (z2) at (1,-1.5);
\coordinate (z3) at (2,2.5) ;
\coordinate (z4) at (1.5,-1) ;
\coordinate (h) at (0,-0.7);
\coordinate (h0) at (0.7,0);
\coordinate (A) at (0,0);
\coordinate (B) at (14,0);
\coordinate (C) at (0,-11);
\coordinate (D) at (14,-11);
\coordinate (E) at (0,-22);

\fill [fill=gray!20] (0,0)--++(3.5,0) --++(z1) --++ (h0) --++ (z2) --++(2.8,0) --++(0,2.8)--++(-9,0)--cycle;

\draw (0,0)--++(3.5,0) node[midway,above] {$l_1$} node {\tiny $\bullet$}		 
                     --++ (z1) node[midway,above] {\tiny $z_1$} node {\tiny $\bullet$}
                       --++ (h0)  node[midway,above] {\tiny $h$} node {\tiny $\bullet$}
                     --++ (z2) node[midway,left] {\tiny $z_2$} node {\tiny $\circ$}
                     --++(2.8,0) node[midway, above] {$l_2$};

\fill [fill=gray!20] (0,-4)--++(3.5,0) --++ (z2) --++ (z3) --++(2.5,0) --++(0,-4)--++(-9,0)--cycle;

\draw (0,-4)--++(3.5,0)node[midway,below] {$l_1$} node {\tiny $\bullet$}
                     --++ (z2) node[midway,below left] {\tiny $z_2$} node {\tiny $\circ$}
                     --++ (z3) node[midway,below right] {\tiny $z_3$} node {\tiny $\circ$}
                     --++(2.5,0) node[midway, below] {$l_2$};

\draw [->]  (11,-2)--++(1,0);

\fill [fill=gray!20] (B)--++(3.5,0) --++(z1) --++ (h) --++ (z2) --++(3.5,0) --++(0,3.5)--++(-9,0)--cycle;

\draw (B)--++(3.5,0) node[midway,above] {$l_1$} node {\tiny $\bullet$}		 
                     --++ (z1) node[midway,above] {\tiny $z_1$} node {\tiny $\bullet$}
                       --++ (h)  node[midway] {\tiny $h\ \ $} node {\tiny $\bullet$}
                     --++ (z2) node[midway,left] {\tiny $z_2$} node {\tiny $\circ$}
                     --++(3.5,0) node[midway, above] {$l_2$};
                     
\draw ($(B)+(4.5,0.5)$)--++ ($(z2)+(h)$);

\fill [fill=gray!20] ($(B)+(0,-4)$)--++(3.5,0) --++ (z2) --++ (z3) --++(2.5,0) --++(0,-4)--++(-9,0)--cycle;

\draw ($(B)+(0,-4)$)--++(3.5,0)node[midway,below] {$l_1$} node {\tiny $\bullet$}
                     --++ (z2) node[midway,below left] {\tiny $z_2$} node {\tiny $\circ$}
                     --++ (z3) node[midway,below right] {\tiny $z_3$} node {\tiny $\circ$}
                     --++(2.5,0) node[midway, below] {$l_2$};
                     
\draw [->]  (-2,-13)--++(1,0);

\fill [fill=gray!20] (C)--++(3.5,0) --++(z1) --++ ($(h) +(z2)$) --++(3.5,0) --++(0,3.5)--++(-9,0)--cycle;

\draw (C)--++(3.5,0) node[midway,above] {$l_1$} node {\tiny $\bullet$}		 
                     --++ (z1) node[midway,above] {\tiny $z_1$} node {\tiny $\bullet$}                      
                     --++ ($(h) +(z2)$)  node[midway,left] {\tiny $z_2'$} node {\tiny $\circ$}
                     --++(3.5,0) node[midway, above] {$l_2$};

\fill [fill=gray!20] ($(C)+(0,-4)$)--++(3.5,0) --++ ($-1*(h)$) --++ ($(h)+(z2)$) --++ (z3) --++(2.5,0) --++(0,-4)--++(-9,0)--cycle;
\draw ($(C)+(3.5,-4)$)--++ ($(z2)$); 

\draw ($(C)+(0,-4)$)--++(3.5,0) node[midway,below] {$l_1$} node {\tiny $\bullet$}
		    --++ ($-1*(h)$) node[midway,left] {\tiny $h$} node {\tiny $\bullet$}
                     --++ ($(h)+(z2)$) node[midway,above] {\tiny $\ z_2'$} node {\tiny $\circ$}
                     --++ (z3) node[midway,below right] {\tiny $z_3$} node {\tiny $\circ$}
                     --++(2.5,0) node[midway, below] {$l_2$};

\draw [->]  (11,-13)--++(1,0);

\fill [fill=gray!20] (D)--++(3.5,0) --++(z1) --++ ($(h) +(z2)$) --++(3.5,0) --++(0,3.5)--++(-9,0)--cycle;

\draw (D)--++(3.5,0) node[midway,above] {$l_1$} node {\tiny $\bullet$}		 
                     --++ (z1) node[midway,above] {\tiny $z_1$} node {\tiny $\bullet$}                      
                     --++ ($(h) +(z2)$)  node[midway,left] {\tiny $z_2'$} node {\tiny $\circ$}
                     --++(3.5,0) node[midway, above] {$l_2$};

\fill [fill=gray!20] ($(D)+(0,-4)$)--++(3.5,0) --++ (h0) --++ ($(h)+(z2)$) --++ (z3) --++(1.8,0) --++(0,-3.3)--++(-9,0)--cycle;

\draw ($(D)+(0,-4)$)--++(3.5,0) node[midway,below] {$l_1$} node {\tiny $\bullet$}
		    --++ (h0) node[midway,below] {\tiny $h$} node {\tiny $\bullet$}
                     --++ ($(h)+(z2)$) node[midway,above ] {\tiny $\ z_2'$} node {\tiny $\circ$}
                     --++ (z3) node[midway,below right] {\tiny $z_3$} node {\tiny $\circ$}
                     --++(1.8,0) node[midway, below] {$l_2$};

\draw [->]  (-2,-24)--++(1,0);

\fill [fill=gray!20] (E)--++(3.5,0) --++(z1) --++ ($(z2)$) --++(3.5,0) --++(0,3.5)--++(-9,0)--cycle;

\draw (E)--++(3.5,0) node[midway,above] {$l_1$} node {\tiny $\bullet$}		 
                     --++ (z1) node[midway,above] {\tiny $z_1$} node {\tiny $\bullet$}                      
                     --++ ($(z2)$)  node[midway,left] {\tiny $z_2$} node {\tiny $\circ$}
                     --++(3.5,0) node[midway, above] {$l_2$};

\fill [fill=gray!20] ($(E)+(0,-4)$)--++(3.5,0) --++ (h0) --++ ($(z2)$) --++ (z3) --++(1.8,0) --++(0,-3.3)--++(-9,0)--cycle;

\draw ($(E)+(0,-4)$)--++(3.5,0) node[midway,below] {$l_1$} node {\tiny $\bullet$}
		    --++ (h0) node[midway,below] {\tiny $h$} node {\tiny $\bullet$}
                     --++ ($(z2)$) node[midway,below left ] {\tiny $\ z_2$} node {\tiny $\circ$}
                     --++ (z3) node[midway,below right] {\tiny $z_3$} node {\tiny $\circ$}
                     --++(1.8,0) node[midway, below] {$l_2$};      
                     
\end{tikzpicture}
\caption{Rotating (clockwise) the hole by $\pi$.}
\label{hole:rotating}
\end{figure}


Repeating this operation until the separatrix $l$ rotates  by the angle $(p+q+1)2\pi$ we get  a continuous family $S_{0,\theta}$, that we glue as in Section~\ref{connected:sum} to the surface $r_\theta.S_1$. For $\theta =(p+q+1)2\pi$, we have $r_\theta.S_1=S_1$, and by construction $S_{0,\theta }=S_{0,0}$. Hence, we get a closed path in the moduli space of meromorphic differentials. The corresponding transformation on the marked horizontal separatrices is $\tau_{P,Q}$.

\subsection{A realization of $\sigma_P$}\label{local:sigma}
As in Section~\ref{local:move}, let $P$ be a singularity of degree $p\neq \pm 1$, obtained after bubbling a handle as above, \emph{i.e.} we started from a singularity $P'$ of degree $p-2$ (with zero residue if $p<0$), and attached to it a torus in $\mathcal{H}(-p,p)$. A metric circle $\mathcal{C}$ around $P'$ is preserved by the construction and becomes a metric circle around the handle. Now we cut $S$ along $\mathcal{C}$ and  rotate the disc by an angle $\theta$.  We get a family of surfaces $(S_\theta)$. Since $\mathcal{C}$ is a $|p-2+1|$ covering of the Euclidean circle, for $\theta=2\pi(|p-2+1|)$, $S_\theta=S$. Keeping track of the marked horizontal separatrices, we see that the marked horizontal separatrix for $P$ has changed by an angle $-\theta =\pm 2\pi(p-1)$, 
hence $\pm 4\pi$ since the total angle at $P$ is $2\pi(p+1)$. As before, all the other horizontal separatrices are unchanged. Similarly, if $S_b^{hor}$ is in the same connected component as a surface where the singularity $P$ is obtained after the bubbling a handle, one gets the element $2\delta_P=\sigma_P$ of $Hor$.

 \subsection{Existence of the elementary moves} 
  The goal of this section is to prove Proposition~\ref{prop:g0} and Proposition~\ref{prop:g}. We first describe the cases when the hypothesis needed for  the transformations given in Sections~\ref{local:move}, \ref{nonlocal:move} and \ref{local:sigma}  are satisfied.
 
\begin{lemma}\label{lemme:tauPQ}
Let $S_b^{hor}$ be a framed surface of genus $g$ such that the underlying translation surface is in a nonhyperelliptic connected component $\mathcal{C}$ of a stratum $\mathcal{H}=\mathcal{H}(p,q,n_1,\dots ,n_r,m_1,\dots ,m_s)$, with $r,s\geq 0$, $n_1,\dots ,n_r>0$ and $m_1,\dots ,m_s<0$. Let $P,Q$ be singularities of $S_b^{hor}$ of degree $p,q$ respectively.
We assume that the conditions $(S)$ and $(CC)$ below are both realized:
\begin{enumerate}
\item[(S)] One of the following condition is fulfilled.
\begin{itemize}
\item[i)] $p,q\geq 0$,
\item[ii)] $p+q\leq -2$, and $\sum_j{m_j}\neq -1$ and ($g\neq 0$ or $r\neq 1$ or $s\neq 1$).
\item[iii)] $p+q\geq 0$, $p<0$ and $\sum_j m_j<-1$.
\end{itemize}
\item[(CC)] 
If $g=1$, we are not in the following case: $Rot(S_b)=d$ and $\mathcal{H}(p+q, n_1,\dots ,n_r,m_1,\dots ,m_s)=\mathcal{H}(-d,d)$ for some $d\geq 2$.
\end{enumerate}
Then, $\tau_{P,Q}\in Mon$.
\end{lemma} 


\begin{proof}
We denote by $\mathcal{H}_0$ the stratum $\mathcal{H}(p+q,n_1,\dots ,n_r,m_1,\dots ,m_s)$. 
Observe first that if $p,q\geq 0$ or $p+q\leq -2$, then the stratum $\mathcal{H}_0$ is not empty. If $p+q\geq 0$ and $p<0$, then the stratum $\mathcal{H}_0$ is empty if and only if $\sum_{i}m_i=1$. 

Also, remark that by Proposition~\ref{zero:residue}, Case $ii)$ corresponds to $p+q\leq -2$ and there exists in each connected component of $\mathcal{H}_0$ a flat surface with a pole of degree $p+q$ and zero residue.

Hence, the hypothesis $(S)$ implies that $\mathcal{H}_0$ is nonempty and we can break a singularity of degree $p+q$ of a surface $S_0\in \mathcal{H}_0$ and obtain an element $S\in\mathcal{H}$. We can also choose the connected component of $\mathcal{H}_0$ were $S_0$ is. From Sections~\ref{local:move} and~\ref{nonlocal:move}, $\tau_{P,Q}\in Mon$ as soon as we can obtain the connected component $\mathcal{C}$. 

\begin{itemize}
\item If the genus is zero, there is nothing more to prove since the stratum is connected.
\item If the genus is at least two, observe that breaking up a singularity preserves the parity of the spin structure when it is well defined, and $\mathcal{H}$ contains a component of a given parity if and only if  $\mathcal{H}_0$ also contains a component of the same parity (see Theorem~\ref{th:cc:g2}).

We claim that if $p\neq q$, then after breaking up the singularity we are not in a  hyperelliptic connected component. Indeed, in the local case, it is easy to see that if $S=S_0\oplus S_1$ is in the hyperelliptic component, then the hyperelliptic involution induces an involution on $S_0$ and $S_1$. But $S_1\in \mathcal{H}(-p-q-2,p,q)$, which is not a hyperelliptic component. In the nonlocal case, we see that the length of saddle connection corresponding to the small hole is unique (no other saddle connection has the same length), so the saddle connection is globally preserved by any isometry. Hence if $S$ is in a hyperelliptic connected component, it also induces an involution on the two pieces of surfaces, which is not possible.

When $p=q$, then $\mathcal{H}_0$ necessarily contains a nonhyperelliptic component (of the same parity as for $\mathcal{C}$), hence starting from a suitable surface and breaking up the singularity of order $p+q$ we obtain the required component $\mathcal{C}$.
\item Assume that the genus is one. Recall that in genus one, the components are classified by the rotation number: for a stratum $\mathcal{H}(k_1,\dots ,k_r)$, the rotation number can be any positive  common divisor of $k_1,\dots ,k_r$ except for a stratum of the type $\mathcal{H}(k,-k)$ were the rotation number must be different from $k$. Let $d$ be the rotation number corresponding to the component $\mathcal{C}$.  
Then $d$ divides $p+q,n_1,\dots ,m_s$. By hypothesis, $\mathcal{H}(p+q,n_1,\dots ,n_r)\neq \mathcal{H}(d,-d)$,  hence there exists a surface in $\mathcal{H}(p+q,n_1,\dots ,n_r)$ with rotation number $d$.  Breaking up the zero of degree $p+q$ into singularities of degree $p,q$ preserves the rotation number and gives the required surface. 
\end{itemize}
\end{proof}

\begin{lemma}\label{lemme:sigmaP}
Let $S_b^{hor}$ be a framed surface of genus $g\geq 1$ such that the underlying translation surface is in a nonhyperelliptic connected component $\mathcal{C}$ of a stratum $\mathcal{H}=\mathcal{H}(p,n_1,\dots ,n_r,m_1,\dots ,m_s)$, with $r,s\geq 0$ and the integers $n_1,\dots ,n_r>0$ and $m_1,\dots ,m_s<0$. Let $P$ be a singularity of degree $p$. We assume the conditions $(S)$ and $(CC)$ below are both realized:
\begin{enumerate}
\item[(S)] One of the following condition is fulfilled.
\begin{itemize}
\item[i)] $p\geq 2$,
\item[ii)] $p\leq -2$ and $\sum_j{m_j}\neq -1$ and ($g\neq 1$ or $r\neq 1$ or $s\neq 1$). 
\end{itemize}
\item[(CC)]
If $g=1$, we are not in the following case: $Rot(S_b)=|p|$ and $n_1,\dots ,n_r,m_1,\dots ,m_s$ are multiples of $p$.
\end{enumerate}
Then, $\sigma_P\in Mon$.
\end{lemma}

\begin{proof}
Denote by $\mathcal{H}_0$ the stratum $\mathcal{H}(p-2,n_1,\dots ,n_r,m_1,\dots ,m_r)$. As in the previous Lemma, condition $(S)$ imply that we can construct a surface in $\mathcal{H}$ by  bubbling a handle $S_1\in \mathcal{H}(-p,p)$ at a singularity of degree $p-2$ on a surface $S_0\in \mathcal{H}_0$. We denote by $S=S_0\oplus S_1\in \mathcal{H}$ the resulting surface. We must check that we can have  $S\in \mathcal{C}$. 

Observe that $S_0$ we can be in any connected component of $\mathcal{H}_0$, $S_1$ can be in any connected component of $\mathcal{H}(-p,p)$. Observe also that if $S_0\oplus S_1$ is in a hyperelliptic connected component, then necessarily $S_0$ and $S_1$ are in hyperelliptic connected components.
\medskip

We first assume that $g\geq 2$.

Assume that $p$ is odd then $\mathcal{H}(-p,p)$ does not contain a hyperelliptic component, hence $S$ is not in a hyperelliptic connected component of $\mathcal{H}$. Furthermore, by Theorem~\ref{th:cc:g2}, the stratum $\mathcal{H}$ has only one nonhyperelliptic connected component, therefore $S \in \mathcal{C}$.

 Assume that $p$ is even and positive.  If $p\geq 6$, the stratum $\mathcal{H}(-p,p)$ has two nonhyperelliptic components (one for each parity of the spin structure). Hence, for any choice of $S_0$, we can obtain any nonhyperelliptic component with even or odd spin structure. If $p=4$, the stratum $\mathcal{H}(-4,4)$  has two components: a nonhyperelliptic one, which has even spin structure (the rotation number is one), and the hyperelliptic one, with odd spin structure (the rotation  number is two). If there exists in $\mathcal{H}_0$ a nonhyperelliptic component, we use it and we obtain $S$ in the required components of $\mathcal{H}$. Otherwise, $\mathcal{H}_0=\mathcal{H}(2,-2)$ or $\mathcal{H}_0=\mathcal{H}(2,-1,-1)$, so $\mathcal{H}=\mathcal{H}(4,-2)$ or $\mathcal{H}=\mathcal{H}(4,-1,-1)$, which has only one nonhyperelliptic component. If $p=2$ then $\mathcal{H}_0= \mathcal{H}(0,n_1,\dots ,n_r,m_1,\dots ,m_s)$ is a stratum with no hyperelliptic connected component, hence 
 $S$ cannot be in a hyperelliptic component, and the parity of its spin structure is given by that of $S_0$, which can be odd or even.
 
The case $p$ even and negative is analogous and left to the reader.
\medskip

Now we assume that $g=1$.  
As in the proof of the previous lemma, let $d>0$ that divides $p,n_1,\dots ,m_s$.
We have $d\neq \pm p$, otherwise $n_1,\dots ,m_s$ are multiples of $p$ and this case is excluded. Hence, there exists a surface $S_1\in \mathcal{H}(-p,p)$ with $Rot(S_1)=d$, and therefore $Rot(S)=d$. 
 \end{proof}

 
 Now we can prove Proposition~\ref{prop:g0}  and Proposition~\ref{prop:g}.

 \begin{proof}[Proof of Proposition~\ref{prop:g0}]
We define the element $\rho=\sum_P \delta_P$, where the sum is taken on all singularities that are not simple poles. Observe that  rotating the surface by $2\pi$ and keeping track of the horizontal separatrices gives the element $\rho$ which is therefore in $Mon$.\\
The case with three singularities is special. Here $\mathcal{H}=\mathcal{H}(p,q,r)$, and denote by $P,Q,R$ the three singularities. Recall that  $(r+1)\delta_R=(p+1)\delta_P=(q+1)\delta_Q=0$ (see Section~\ref{mod:space:framed}).
 Since $g=0$ we also have,  $r+1=-1-p-q$, so
$$(r+1)\rho=-(r+1)(\delta_P+\delta_Q)=(p+q+1)(\delta_P+\delta_Q)=\tau_{P,Q}\in Mon.$$

We look at the cases where the hypothesis $(S)$ given in Lemma~\ref{lemme:tauPQ} is not satisfied (the genus being zero, the hypothesis $(CC)$ is satisfied).
\begin{itemize}
\item[a)] $p+q=-1$. Here, $\tau_{P,Q}=0$ so there is nothing to prove.
\item[b)]  $p+q\geq 0$ and $p<0$ and  $\sum_{j}m_j\geq -1$. This case does not appear since $p+q+\sum_{i}n_i+\sum_{j}m_j=-2$. 
\item[c)]  $p+q\leq -2$, and $r=s=1$. Denote respectively by $M,N$ the two other singularities, and respectively by $m<0$ and $n>0$ their degree. Observe that if $m+n\geq 0$, then from Case  b) above,  the hypothesis $(S)$ is necessarily satisfied. 
 We have  $n+m=-2-(p+q)\geq 0$ so $\tau_{M,N}\in Mon$. As before, the element $\rho=\delta_P+\delta_Q+\delta_M+\delta_N$  is in $Mon$.  Then, the condition $p+q+m+n=-2$ implies
$\tau_{M,N}-(n+m+1)\rho=-\tau_{P,Q}$, so $\tau_{P,Q}\in Mon$.

\item[d)] $p+q\leq -2$, and the stratum is of the form $\mathcal{H}(p,q,n_1,\dots ,n_r,-1)$ with $r\geq 1$ and $n_1,\dots ,n_r>0$. Denote by $M$ the simple pole, and by $N_1,\dots ,N_r$ the singularities of degree respectively $n_1,\dots ,n_r$. 
As before, we have $\delta_{N_k}=-\tau_{M,N_k}\in Mon$. Also, 
$$\tau_{P,Q}=(p+q+1)(\delta_P+\delta_Q)=(p+q+1)(\rho -\sum_k{\delta_{N_k}}).$$
Hence, $\tau_{P,Q}\in Mon$.
\end{itemize}

 \end{proof}

 \begin{proof}[Proof of Proposition~\ref{prop:g}]
 
 
 \medskip
 As in the previous proof , we define the element $\rho=\sum_P \delta_P$, where the sum is taken on all singularities that are not simple poles. Recall that $\rho\in Mon$.

 We first look at the element $\tau_{P,Q}$. We study the cases where the hypothesis $(S)$ given in Lemma~\ref{lemme:tauPQ} is not satisfied.
 \begin{itemize}
 \item $p+q=-1$, there is nothing to prove.
 \item  $p+q\geq 0$ and $p<0$ and $\sum_{j}m_j=0$. Since we suppose that $P$ is not the only pole, this case does not appear.
 \item $p+q\geq 0$ and $p<0$ and $\sum_{j}m_j=-1$. It means that except for the pole $P$ of order $p$, there is a unique other pole $M$ which is simple. The case $p=-1$ does not appear by hypothesis of the proposition (it corresponds to the case where there are exactly  two poles that are simple). If $p<-1$, we denote by $N_1,\dots ,N_r$ the singularities of degree $n_1,\dots ,n_r$ respectively. We remark that $\delta_{P_i}=-\tau_{N_i,M}\in Mon$ by Lemma~\ref{lemme:tauPQ} (since there is a simple pole, the hypothesis $(CC)$ of the lemma is automatically satisfied). Hence we can conclude as in Case $d)$ of the previous proposition.
 \item $p+q\leq -2$, and the stratum is of the form $\mathcal{H}(p,q,n_1,\dots ,n_r,-1)$ with $n_k>0$. We construct $\tau_{P,Q}$ as in Case $d)$ of the previous proposition.
 \end{itemize}
Hence there remains the case where $g=1$, the rotation number is $d$, and $\mathcal{H}(p+q,n_1,\dots ,n_r,m_1,\dots ,m_s)=\mathcal{H}(d,-d)$. 
\begin{itemize}
\item  $p+q=d$, hence the stratum is $\mathcal{H}(kd, (1-k)d,-d)$ for some $k>1$ with $p=kd$ and $q=(1-k)d$.
\item  $p+q=-d$, hence the stratum is $\mathcal{H}(kd, -(1+k)d,d)$ for some $k>0$ with $p=kd$ and $q=-(1+k)d$.
\end{itemize}
We postpone these two cases to the end of the proof. In the remaining of the proof we can use that $\tau_{P,Q}\in Mon$ in any cases except these two cases above.\medskip

Now we look at the element $\sigma_P$. We study the cases where the hypothesis given in Lemma~\ref{lemme:sigmaP} are not satisfied. 
\begin{itemize}
\item  $p\leq -2$ and $\sum_{j} m_j=-1$. There is exactly one other pole $M$ which is simple. Here, $\tau_{P,M}\in Mon$ and we have $\sigma_{P}=2\tau_{P,M}\in Mon$.

\item $p\leq -2$,  and $g=r=s=1$. There is exactly one other pole $M$ and one zero $N$ of degree $m,n$ respectively.  We have $m+n+p=0$.
If $\tau_{M,N}=(m+n+1)(\delta_M+\delta_N)\in Mon$, then 
$$(m+n+1)\rho-\tau_{M,N}=(-p+1)\delta_P=(-p-1+2)\delta_P=\sigma_P\in Mon.$$
Otherwise, from the above study, we have $n=-kp$, $m=(k-1)p$ for some $k\geq 2$ and the rotation number is $|p|$. The case $k=2$ corresponds to the hyperelliptic connected component of $\mathcal{H}(p,-2p,p)$ and therefore does not occur by hypothesis. Hence $k>2$ and therefore $\tau_{P,N}\in Mon$ (we are not in the two exceptional cases of the above study). Also, $\tau_{P,M}\in Mon $ for the same reasons. Hence
$$p\rho-\tau_{P,N}-\tau_{P,M}=(p-n-m)\delta_P=2p\delta_P=
-\sigma_P\in Mon.$$

\item The hypothesis $(CC)$ fails:  the stratum is $\mathcal{H}(\pm d, k_1d,\dots ,k_rd)$, and $\deg(P)=p=\pm d$. In this case, as before, we produce $\sigma_P$ as a combination of ``$\tau$'' elements.  If there is a singularity $Q$ of degree $q=-p$, then observe that $\tau_{P,Q}\in Mon$, hence 
\begin{eqnarray*}
(1+q)\tau_{P,Q}&=&(1+q)q\delta_P+(1+q)q\delta_Q=(1-p)(-p)\delta_P\\
&=&2\delta_P=\sigma_P\in Mon.\end{eqnarray*}
If there are least two other singularities $P',P''$ of degree $p$, then we have $\tau_{P,P'}+\tau_{P,P''}-\tau_{P',P''}=\sigma_P\in Mon$.
So we can assume that there are at most two singularities of degree $p$ and no singularities of degree $-p$.
  
   If there are two singularities $P,P'$ of degree $p$. Observe that $\tau_{P,Q}\in Mon$ for each $Q\neq P$. Indeed from the previous study, this may be false only in $\mathcal{H}(d,d,-2d)$ and in $\mathcal{H}(-d,-d,2d)$. But in these cases, the connected component with rotation number $d$ is precisely the hyperelliptic connected component. Also, we have $\sigma_Q\in Mon$. This implies 
 \begin{eqnarray*}
 \sum_{Q\neq P,P'}\left( 2\tau_{P,Q}-p \sigma_Q\right)&=&2\left(\sum_{Q\neq P,P'} \deg(Q) \right)\delta_P \\
 &=& -4p\delta_P=4\delta_P=2\sigma_P\in Mon.
 \end{eqnarray*}
 Hence if $p$ is even, $\sigma_{P}\in Mon$.
 If $p$ is odd,  we have
 
  \begin{eqnarray*}
&&  \rho+\tau_{P,P'}+\sum_{Q\neq P,P'} \left(\tau_{P,Q}-\frac{p+1}{2}\sigma_Q\right) \\
 &=&  \sum_{Q\neq P,P'} \left(\delta_Q+\tau_{P,Q}-(p+1)\delta_Q\right)
 \\
 &=& \sum_{Q\neq P,P'} \deg(Q)\delta_P = -2p\delta_P=2\delta_P=\sigma_P\in Mon.
  \end{eqnarray*}

 If $P$ is the only singularity of degree $p$, we have 
  $$\sigma_{P}=\sum_{Q\neq P} (2\tau_{P,Q} - p \sigma_{Q})\in Mon.$$
 \end{itemize}
 Therefore, we have proven that $\sigma_P\in Mon$ in each case.\medskip
 
 Now, we come back to a stratum of the kind  $\mathcal{H}(kd, (1-k)d, -d)$ (for some $k>1$), and we look at the component of rotation number $d$. We want to produce $\tau_{P,Q}$, where $\deg(P)=kd$ and $\deg(Q)=(1-k)d$. Note that if $k=2$ we are in the hyperelliptic connected component of $\mathcal{H}(2d,-d,-d)$, hence this case does not occur by hypothesis. Denote by $R$ the other singularity. We have
 \begin{enumerate}
\item if $d$ is even, $\sigma_P=2\delta_P\in Mon$,  hence $\delta_P\in Mon$. Similarly, $\delta_Q\in Mon$, so $\tau_{P,Q}\in Mon$.
\item if $d$ is odd, since $\tau_{P,Q}=(d+1)(\delta_P+\delta_Q)$, we get $\tau_{P,Q}=(d+1)\rho-\frac{d+1}{2}\sigma_R\in Mon$.
\end{enumerate}
The case with a stratum of the kind $\mathcal{H}(dn, -(1+k)d, d)$ is similar. This concludes the proof. 
 \end{proof}

\section{Positive genus}\label{pos:genus}
\subsection{A topological invariant} \label{topo:inv}
Here we describe a topological invariant for connected components of $\mathcal{H}^{hor}$ in the following cases:
\begin{itemize}
\item there are no simple poles, and there are singularities of odd degree.
\item there are exactly two poles that are simple, and some odd singularities of positive degree.
\end{itemize}

We first assume that there are no simple poles. The invariant is inspired by  the well known \emph{parity of spin structure} for translation surfaces with even degree singularities (\cite{KoZo}). See also \cite{B:labeled}.

For a  smooth closed curve $\gamma$ in $S$ that does not pass through any singularity, define $ind(\gamma)$ to be the index of the Gauss map defined by $\gamma'$. Choose $(\alpha_i,\beta_i)_{i\in \{1,\dots, g\}}$ a collection of smooth  simple closed curves representing a sympletic basis for the homology of $S$, and define 
$$\phi(\alpha,\beta)=\sum_{i=1}^g (ind(\alpha_i)+1)(ind(\beta_i)+1) \mod 2.$$
When $S$ has no odd degree singularities, $\phi(\alpha,\beta)$ does not depend on the choices of $(\alpha,\beta)$ and is the parity of the spin structure of $S$ (see \cite{B:merom,KoZo}).

When there are odd degree singularities, $\phi(\alpha,\beta)$ clearly depends on the choice of $(\alpha,\beta)$: indeed, if we continuously deform an element $\alpha_i$ or $\beta_i$ until we ``cross an odd singularity'', its index changes by a odd value.

Now we choose once for all an ordered pairing of the set of odd degree singularities, \emph{i.e.} we denote by $(P_1^-,P_1^+), \dots ,(P_s^-, P_s^+)$ these singularities. For a simple curve $\gamma$ joining $P_j^-$ to $P_j^+$, we define $ind(\gamma)$ to be the index (mod 2) of the Gauss map defined by a simple smooth path $\tilde{\gamma}$, whose image is in a small neighborhood of the image of $\gamma$, and such that:
\begin{itemize}
\item $\tilde{\gamma}$ is tangent in its starting point to the fixed horizontal separatrix of $P_j^-$
\item $\tilde{\gamma}$ is tangent in its ending point to the fixed horizontal separatrix of $P_j^+$, rotated by $\pi$.
\end{itemize}
Since $P_j^{+},P_j^-$ are both of odd degree, their corresponding conical angles are an even multiple of $2\pi$, and hence $ind(\gamma)$ does not depend on the choice of $\tilde{\gamma}$.

Now, for a fixed choice of $(\alpha_i,\beta_i)_i$, let $\gamma_1,\dots ,\gamma_s$ be a collection of simple curves, with no pairwise intersections, with $\gamma_j$ joining $P_j^-$ to $P_j^+$, and each $\gamma_j$  does not intersect  the $(\alpha_i,\beta_i)_i$. Then, we define 
$$Sp(\alpha,\beta,\gamma)=\phi(\alpha,\beta)+\sum_{j} ind(\gamma_j) \mod 2.$$

It is obvious that $Sp(\alpha,\beta,\gamma)$ can take two values, for different choices of horizontal separatrices. 
We will prove that $Sp(\alpha,\beta,\gamma)$ does not depend on the choice of $\alpha,\beta,\gamma$ (only on the choice of the oriented pairing of the odd degree singularities). Hence, $Sp$ defines a topological invariant for the connected components of $\mathcal{H}^{hor}$.

\begin{lemma}
$Sp(\alpha,\beta,\gamma)$ does not depend on the choice of $\gamma$.
\end{lemma}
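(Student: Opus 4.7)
My plan is to prove the invariance of $\sum_j ind(\gamma_j)\pmod 2$ (with $\alpha,\beta$ fixed) by checking it under elementary moves connecting any two valid configurations. Given $\gamma$ and $\gamma'$, two types of moves suffice: (i) an ambient isotopy of one $\gamma_j$ within the complement of $\alpha\cup\beta$ and of the singularities, and (ii) an \emph{arc swap} in which $\gamma_j$ is replaced by a simple arc with the same endpoints that differs from $\gamma_j$ by passing on the other side of a single singularity $P\notin\{P_j^-,P_j^+\}$.

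Move (i) preserves each $ind(\gamma_j)$ by continuity: the prescribed tangents at the odd endpoints $P_j^\pm$ are unchanged and the tangent winding is a homotopy invariant modulo $2$. For move (ii), I apply Gauss-Bonnet to the bigon cobounded by smooth approximations of $\gamma_j$ and $\gamma_j'$; crucially, the corner contributions at the endpoints $P_j^\pm$ are well-defined modulo $2$ because the cone angles $2\pi(\deg(P_j^\pm)+1)$ are even multiples of $2\pi$ by the odd-degree hypothesis. This gives
\[
ind(\gamma_j)-ind(\gamma_j')\equiv\deg(P)+1\pmod 2.
\]
Since every odd-degree singularity is by hypothesis already a paired endpoint, $P$ is either some $P_k^\pm$ (with $k\neq j$, so $\deg(P)$ odd and the invariant is preserved), or a non-endpoint of even degree (so a single such swap changes the invariant by $1\pmod 2$).

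The main obstacle is thus to show that the net number of arc swaps of the second type is always even when passing from $\gamma$ to $\gamma'$. For this, I work directly with the $1$-cycle $C=\sum_j(\gamma_j-\gamma_j')$, well-defined since $\partial\gamma_j=\partial\gamma_j'=P_j^+-P_j^-$. A global Gauss-Bonnet calculation on any $2$-chain $D$ with $\partial D=C$ expresses $\sum_j(ind(\gamma_j)-ind(\gamma_j'))\pmod 2$ as a combination of $\chi(D)$ and the degrees of singularities in $\mathrm{int}(D)$, with the corner contributions at the paired endpoints $P_j^\pm$ canceling in pairs thanks to the fixed ordered pairing and the even cone angles. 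Since non-endpoint singularities all have even degree and hence contribute trivially modulo $2$, and since the paired boundary structure forces the residual $\chi(D)$ term to have even parity, we conclude $\sum_j ind(\gamma_j)\equiv\sum_j ind(\gamma_j')\pmod 2$, which is the desired invariance. The delicate point will be this last Gauss-Bonnet bookkeeping, specifically the verification that the corner contributions at the odd endpoints cancel in pairs and that the resulting Euler-characteristic contribution is even.
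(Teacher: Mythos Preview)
Your arc-swap formula is wrong: crossing a singularity $P$ changes $ind(\gamma_j)$ by $\deg(P)$, not $\deg(P)+1$. In the Gauss--Bonnet computation on the bigon, the two cusps (where $\tilde\gamma_j$ and $\tilde\gamma_j'$ share tangent directions) contribute exterior angle $\pi$ each, the interior curvature at $P$ is $-2\pi\deg(P)$, and $\chi=1$; this yields $\int_{\gamma_j}k_g-\int_{\gamma_j'}k_g=2\pi\deg(P)$, hence $ind(\gamma_j)-ind(\gamma_j')\equiv\deg(P)\pmod 2$. Consequently your case analysis is reversed: swaps across even-degree (non-endpoint) singularities are harmless, while swaps across an odd-degree endpoint $P_k^\pm$ with $k\neq j$ change the parity by $1$. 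The case you dismiss as trivial is precisely the one that requires an argument.

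The mechanism you are missing is that a swap of $\gamma_j$ across $P_k^\pm$ forces $\gamma_j$ to intersect $\gamma_k$, since $\gamma_k$ terminates at $P_k^\pm$. The paper exploits this by tracking the quantity $\sum_i ind(\gamma_i)+N(\gamma)\pmod 2$, where $N(\gamma)$ is the number of crossings among the arcs: replacing $\gamma_1$ by $\gamma_1'$ changes $ind(\gamma_1)$ by the number $k$ of odd singularities in the region between them, and simultaneously introduces exactly $k\pmod 2$ new crossings with the remaining $\gamma_j$, so the combined quantity is preserved. Iterating and using $N(\gamma)=N(\gamma')=0$ gives the result. Your closing ``global Gauss--Bonnet'' paragraph does not supply this compensation; the assertions that corner contributions cancel in pairs and that the residual $\chi(D)$ term has even parity are unjustified, and in light of the sign error above cannot be trusted as stated.
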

\begin{proof}
Let $\gamma,\gamma'$ be two collections of simple curves as above. Up to making a small perturbation of $\gamma$ and $\gamma'$, we can assume that the number of intersection points of any two curves in this collection is finite.
The surface $D=S\backslash \cup_{i} (\alpha_i \cup \beta_i)$ is a topological disc with $g-1$ holes. By definition $\gamma_1$ and $\gamma_1'$ have the same end points. If they do not intersect in their interior, then $ind(\gamma_1)=ind(\gamma_1')+k$, where $k$ is the number of odd singularity
of a component of $D\backslash (\gamma_1\cup \gamma_1')$. In 
  this case, the number of intersection points (mod 2) between $\gamma_1'$ and the $(\gamma_{j})_{j\neq 1}$  is $k$. If $\gamma_1$ and $\gamma_1'$ have $N>0$ intersection points we find $\gamma_1''$ with the same endpoints as $\gamma_1$, such that $\gamma_1$ and $\gamma_1''$ have no interior intersection point and such that $\gamma_1''$ and $\gamma_1'$ have $N'<N$ intersection point and we iterate the procedure.

In particular replacing $\gamma_1$ by $\gamma_1'$ preserves the value:
$$\sum_{i} ind(\gamma_i)+ N(\gamma)  \mod 2.$$
where $N(\gamma)$ is the number of self-intersections of the family $\gamma$.

Hence, successively replacing $\gamma_i$ by $\gamma_i'$, we obtain:
$$\sum_{i} ind(\gamma_i)=\sum_{i} ind(\gamma_i') \mod 2.$$
\end{proof}

\begin{lemma}\label{lemme:gamma}
$Sp(\alpha,\beta,\gamma)=Sp(\alpha,\beta)$ does not depend on the choice of the symplectic basis $\alpha,\beta$.
\end{lemma}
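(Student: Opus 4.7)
The plan is to reduce the invariance statement to verification on a generating set of elementary moves on the symplectic basis, using at each stage the freedom to adjust $\gamma$ afforded by the previous lemma. Thus it suffices to exhibit, for each elementary move transforming $(\alpha,\beta)$ into $(\alpha',\beta')$, \emph{some} pair $\gamma,\gamma'$ such that $Sp(\alpha,\beta,\gamma) = Sp(\alpha',\beta',\gamma')$.

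I recall that $\mathrm{Sp}(2g,\mathbb{Z})$ is generated by four kinds of moves: (i) swaps $(\alpha_i,\beta_i) \leftrightarrow (\alpha_j,\beta_j)$; (ii) rotations $(\alpha_i,\beta_i) \to (\beta_i,-\alpha_i)$; (iii) the transvection $\alpha_i \to \alpha_i+\beta_i$; and (iv) the transvection $(\alpha_i,\beta_j) \to (\alpha_i+\alpha_j, \beta_j-\beta_i)$ for $i \ne j$. Each such change can be realized on smooth representatives by band sums and isotopies. Moves (i) and (ii) manifestly preserve $\phi(\alpha,\beta)$, using $\mathrm{ind}(-c) \equiv \mathrm{ind}(c) \pmod 2$, and $\gamma$ is unaffected.

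For (iii) and (iv) the key tool is the quadratic refinement
\[
q(a+b) \equiv q(a) + q(b) + a \cdot b \pmod 2, \qquad q(c) := \mathrm{ind}(c) + 1 \pmod 2,
\]
which holds when the new curves are realized by band sums in a region disjoint from the singularities (possible since the singularities form a finite set). Applied to move (iii), a direct computation using $q(\beta_i)\bigl(q(\beta_i)+1\bigr) \equiv 0$ yields $q(\alpha_i+\beta_i)\, q(\beta_i) \equiv q(\alpha_i)\, q(\beta_i)$. Applied to move (iv), expanding and using $\alpha_i \cdot \alpha_j = \beta_i \cdot \beta_j = 0$ yields $q(\alpha_i+\alpha_j)\, q(\beta_i) + q(\alpha_j)\, q(\beta_j-\beta_i) \equiv q(\alpha_i)\, q(\beta_i) + q(\alpha_j)\, q(\beta_j)$. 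Hence $\phi(\alpha',\beta')=\phi(\alpha,\beta)$, and $\gamma' := \gamma$, slightly isotoped to remain disjoint from the new basis curves, is a valid choice (the complement of a symplectic basis is a $4g$-gon containing the paired odd singularities in its interior, so the arcs $\gamma_j$ can be re-routed there).

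The main obstacle is the case when an elementary move forces a basis curve to be isotoped across an odd singularity $P \in \{P_j^-,P_j^+\}$, which changes its index by $1 \pmod 2$ and would \emph{a priori} change $\phi$. To absorb this, I perform a simultaneous homotopy of the corresponding $\gamma_j$ across $P$; the induced change in $\mathrm{ind}(\gamma_j)$ cancels the change in $\phi$ modulo $2$, by an intersection-parity count relating the side of $P$ on which the basis curve lies and the tangent directions at $P_j^\pm$. By the previous lemma this modification of $\gamma$ does not affect $Sp$. Summing the contributions over the sequence of elementary moves connecting $(\alpha,\beta)$ to $(\alpha',\beta')$ yields the equality $Sp(\alpha,\beta) = Sp(\alpha',\beta')$, as required.
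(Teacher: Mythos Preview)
Your approach has a genuine gap in the final paragraph, which is where the real content lies. The phrase ``homotopy of $\gamma_j$ across $P$'' is ill-posed: $P\in\{P_j^-,P_j^+\}$ is an \emph{endpoint} of $\gamma_j$, so $\gamma_j$ cannot be moved across it. What is actually required is the following. When you push, say, $\alpha_i$ across $P_j^+$, the index of $\alpha_i$ changes by $\deg(P_j^+)$, so $\phi$ changes by $(\mathrm{ind}(\beta_i)+1)\deg(P_j^+)\equiv \mathrm{ind}(\beta_i)+1\pmod 2$. But now $\alpha_i$ meets $\gamma_j$ (they share the endpoint $P_j^+$ on the same side), violating the disjointness hypothesis; to restore it you must reroute $\gamma_j$ around the $i$th handle, and a direct check shows this changes $\mathrm{ind}(\gamma_j)$ by exactly $\mathrm{ind}(\beta_i)+1$. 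That is the cancellation you need, and it is a specific geometric computation (the paper carries it out with a figure), not an unspecified ``intersection-parity count.''

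There is also a structural issue with your reduction. Generators of $\mathrm{Sp}(2g,\mathbb Z)$, realized by band sums, do not suffice to connect arbitrary curve-level symplectic bases: two such systems representing the same homology classes may differ by a nontrivial Torelli element, and nothing in your argument handles that case. The paper avoids both problems by a single device worth comparing with. After performing exactly the compensation above to push $(\alpha,\beta)$ off a fixed arc system $\gamma'$, it collapses each $\gamma'_j$ to a point, obtaining a closed surface $\overline S$. On $\overline S$ the map $\overline c\mapsto \mathrm{ind}(c)+1\pmod 2$ \emph{is} a well-defined quadratic form on $H_1(\overline S,\mathbb Z/2\mathbb Z)$, since crossing a collapsed point changes the index by the even quantity $\deg(P_j^-)+\deg(P_j^+)$. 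Arf invariance on $\overline S$ then yields $\phi(\alpha'',\beta'')=\phi(\alpha',\beta')$ in one stroke, with no generator-by-generator verification and no Torelli obstruction.
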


\begin{proof}
Let $(\alpha,\beta,\gamma)$ and $(\alpha',\beta',\gamma')$ be two families of curves as above. We first show that, there exists $\alpha'', \beta''$ homotopic to $\alpha,\beta$, that do not intersect $\gamma'$ and such that:
$$Sp(\alpha,\beta,\gamma)=Sp(\alpha'',\beta'',\gamma').$$

By the previous lemma, we can choose $\gamma$ so that it does not intersect $\gamma'$. Let $\gamma_1'\in \gamma' $, and we assume that it intersects $\alpha, \beta$. Consider the last intersection point, \emph{i.e.} $x_0=\gamma_1'(t_0)$, and $\alpha,\beta$ do not intersect $(\gamma_1'(t))_{t>t_0}$. We assume for instance that the intersection is with $\alpha_1$.

\begin{figure}[htb]
\begin{center}
\labellist
\tiny 
\hair 2pt
\pinlabel $P_1^+$ at 400 530
\pinlabel $P_1^-$ at 170 520
\pinlabel $\alpha_1$ at 100 450
\pinlabel $\beta_1$ at 50 680
\pinlabel $\gamma_1'$ at 200 570
\pinlabel $\gamma_1$ at 450 580
\pinlabel $\alpha_1$ at 100 30
\pinlabel $\beta_1$ at 50 260
\pinlabel $\gamma_1'$ at 200 150
\pinlabel $\gamma_1$ at 450 185
\endlabellist
\includegraphics[scale=0.4]{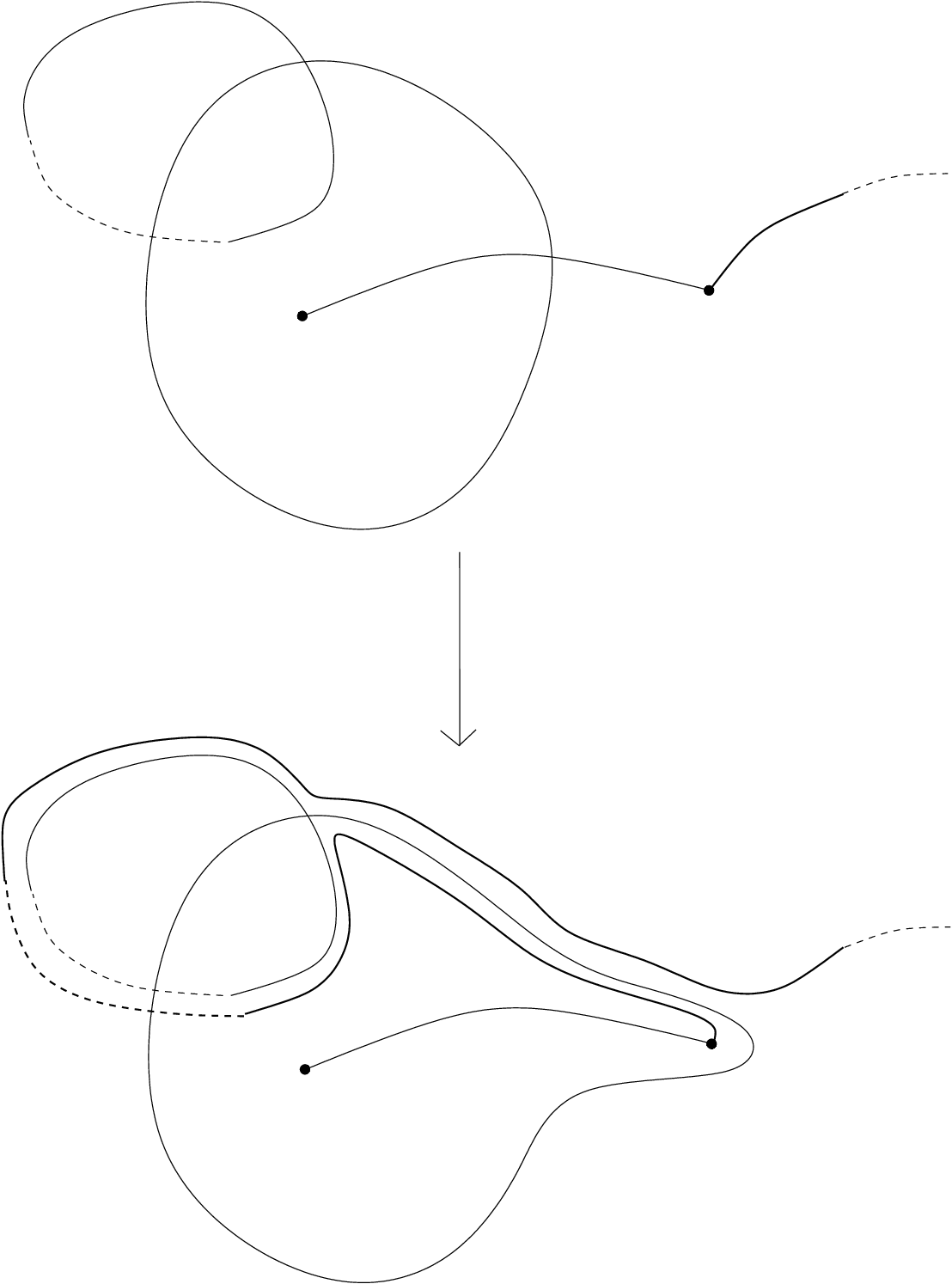}
\caption{Decreasing the number of intersection points.}
\label{lemme2sp}
\end{center}
\end{figure}

Now, we push $\alpha_1$ until it crosses the endpoint $P_1^+$. So, $ind(\alpha_1)$ is replaced by $ind(\alpha_1)\pm \deg(P_1^+)$. But now, $\alpha_1$ intersects $\gamma_1$.
For $\varepsilon$ small enough, the $\varepsilon$-boundary of $\alpha_1\cup \beta_1$, once removed $\alpha_1,\beta_1$ is a topological annulus, hence, we can modify $\gamma_1$ in that neighborhood to avoid $\alpha_1$  (see Figure~\ref{lemme2sp}). This replaces $ind(\gamma_1)$ by $ind(\gamma_1)+ind(\beta_1)+1$.
In particular $Sp(\alpha,\beta,\gamma)$ is not changed by this procedure, and the new family $(\alpha,\beta)$ has one intersection point less with $\gamma'$.

 Iterating the process, we eventually obtain $\alpha'',\beta''$ that do not intersect~$\gamma'$. 
 
Now, we consider the canonical continuous map $\phi:S\to \overline{S}$, where $\overline{S}$ is the surface obtained by collapsing each curve $\gamma_i'$ to a single point. The map $\phi$ induces an homeomorphism from $S\backslash \gamma'$ to its image. 

For a simple closed curve $\overline{c}=\phi(c)$ in $\overline{X}$, that does not pass through the image of a singularity, we define $ind({\overline{c}})=ind({c})$.  The map $\theta(\overline{c})=ind(\overline{c})+1 \mod 2$ defines a quadratic form on ${H}_1(\overline{S},\mathbb{Z}/2\mathbb{Z})$ (see \cite{Johnson,KoZo}). Hence its Arf invariant is
$$\sum_i (ind(\alpha_i'')+1)(ind(\beta_i'')+1)=\sum_i (ind(\alpha_i')+1)(ind(\beta_i')+1) \mod 2.$$
Hence $Sp(\alpha',\beta',\gamma')=Sp(\alpha'',\beta'',\gamma')=Sp(\alpha,\beta,\gamma)$.
\end{proof}

When the stratum is of the form $\mathcal{H}(-1,-1,n_1,\dots ,n_r)$, with positive integers $n_1,\dots ,n_r$,  we define the invariant after first cutting the two infinite cylinder, and gluing them together to make a finite cylinder, on a surface in the stratum $\mathcal{H}(n_1,\dots ,n_r)$. 

\begin{remark}\label{rem:function}
The invariant $Sp$ obtained depends on the choice of the  pairing $\{(P_1^-,P_1^+),(P_2^-,P_2^+),\dots ,(P_s^-, P_s^+)\}$ of the odd degree singularities. We can ask how $Sp(S)$ changes when we replace the pairing by another one. It is enough to study the case when we interchange $P_1^-$ with $P_1^+$ and when we interchange $P_1^-$ with $P_2^-$.
\begin{enumerate}
\item For the first case ($P_1^-$ with $P_1^+$). $Sp(S)$ is clearly replaced by $Sp(S)+1$.
\item For the second case, we replace again $Sp(S)$ by $Sp(S)+1$ . Indeed, consider as before two nonintersecting curves $\gamma_1,\gamma_2$ joining $P_1^-$ to $P_1^+$  and $P_2^-$ to $P_2^+$ respectively. Then, deform them until $\gamma_1,\gamma_2$ are tangent on a unique intersection point. Then, we obtain a new pair $\gamma_1',\gamma_2'$ joining $P_1^-$ to $P_2^+$  and $P_2^-$ to $P_1^+$ such that $Ind(\gamma_1)+Ind(\gamma_2)=Ind(\gamma_1')+Ind(\gamma_2')$. But $\gamma_1',\gamma_2'$ intersect (transversally) on one point. From the proof of Lemma~\ref{lemme:gamma}, modifying $\gamma_1',\gamma_2'$ so that they don't intersect will change the invariant by adding 1. 
\end{enumerate}
In particular, the invariant $Sp$ can be seen as a function from the set of pairings of odd degree singularities to $\mathbb{Z}/2\mathbb{Z}$, satisfying the above conditions. 
\end{remark}


\subsection{Proof of Theorem~\ref{MT:g}}
We assume first that there are only even degree singularities (or even degree zeroes, and a pair of simple poles.) We also assume that the underlying connected component is not a hyperelliptic one.

Let $P$ be a singularity of the base surface $S_p$. From Proposition~\ref{prop:g}, the element $\sigma_P=2\delta_P$ is in $Mon$. Since the singularity has even degree, $\delta_P\in Mon$. Hence, $Mon=Hor$.
\medskip

Now we assume that there are odd degree singularities. First observe that if there is a simple pole $P$ (except the case of two simple poles and no other poles), then for any $Q\neq P$, $\tau_{P,Q}=\delta_Q\in Mon$. Hence $Mon=Hor$.

So, we can assume that there are no simple poles. 
As before, for each singularity $P$ of even degree, we use $\sigma_P$ to see that $\delta_P\in Mon$. Now, fix a frame, and consider $P_1,\dots ,P_{2r}$ the singularities of odd degree. Then, for $i$ from 1 to $2r-1$ successively, we use $\sigma_{P_i}$ and $\tau_{P_i,P_{i+1}}$ to obtain an element of the form $\delta_{P_i}+k_i\delta_{P_i+1}\in Mon$. For, $P_{2r}$, we can only get half of possible horizontal separatrices, by using $\sigma_{P_{2r}}$. Hence, we see that $Mon$ is a subgroup of $Hor$ of index at most~2.  According to  Section~\ref{topo:inv}, in this case the invariant $Sp$ can take two values, therefore $Mon\neq Hor$ and $Mon$ is a subgroup of $Hor$ of index~2.

Observe that if there is only one pole $Q$, for a given singularity $P$, the element $\tau_{Q,P}$ is not necessarily possible. Hence, we first fix the separatrix of $Q$ by using $\rho=\sum_{R}\delta_R$, the element in $Mon$ that corresponds to rotating the surface by $2\pi$, and continue as above.

The case with two simple poles is similar and left to the reader.

\subsection{Hyperelliptic connected component}\label{hyp:case}
From \cite{B:merom}, a hyperelliptic connected component of the moduli space of meromorphic differentials is necessarily a component of a stratum of the following kind:
\begin{itemize}
\item $\mathcal{H}(n,n,p,p)$
\item $\mathcal{H}(2n,p,p)$
\item $\mathcal{H}(n,n,2p)$
\item $\mathcal{H}(2n,2p)$
\end{itemize}
for some, $n>0$ and $p<0$.

Let $\mathcal{C}_{hyp}$ be a hyperelliptic connected component of the moduli space of translation surface with poles. 
Let $\mathcal{H}_{\mathcal{C}_{hyp}}^{hor}$ be the set of framed translation surfaces whose underlying surfaces are in $\mathcal{C}_{hyp}$. We assume that there are two (marked) zeroes $N_1,N_2$ of the same degree. Denote by $i$ the hyperelliptic involution. Since $i(N_1)=N_2$, the image by  $i$ of the marked horizontal separatrix $l_1$ of $N_1$ is a horizontal separatrix $i(l_1)$ of $N_2$. The angle between the marked horizontal separatrix $l_2$ of $N_2$ and $i(l_1)$ is an odd multiple of $\pi$ and is between $\pi$ and $(2n+1)\pi$ and is invariant by continuous deformations. Hence, it is an invariant $\Phi_{zeroes}$ of connected components, which can clearly get $n+1$ values. Similarly, if there are two poles of the same degree, there is an analogous invariant  $\Phi_{poles}$ for the horizontal separatrices associated to the pair of poles, with $|p+1|$ values.

We have the following lemma:
\begin{lemma}
Let $\mathcal{H}_{\mathcal{C}_{hyp}}^{hor}$ be a hyperelliptic connected component with framed horizontal separatrices. Let $S_b^{hor}\in \mathcal{H}_{\mathcal{C}_{hyp}}^{hor}$. Let $P\in S_b^{hor}$ be a (marked) singularity. 
\begin{itemize}
\item If there exists another singularity $P'$ of the same degree, then $\tau_{P,P'}\in Mon$.
\item Otherwise,  $\sigma_P\in Mon$. 
\end{itemize}
\end{lemma}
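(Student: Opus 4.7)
The plan is to follow the scheme of Proposition~\ref{prop:g}, adapting each elementary move so that it is equivariant under the hyperelliptic involution $i$; this guarantees the resulting loops remain inside the hyperelliptic component $\mathcal{C}_{hyp}$. The dichotomy is dictated by the list of four hyperelliptic strata recalled at the beginning of Section~\ref{hyp:case}: two singularities of the same degree are always exchanged by $i$, while a singularity unique of its degree is always fixed by $i$. This matches exactly the two cases of the lemma.

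For the first item ($\tau_{P,P'}$ with $\deg(P)=\deg(P')$), I would realize $S_b^{hor}$ as the output of an $i$-equivariant breaking up of an $i$-fixed singularity of degree $\deg(P)+\deg(P')$ in an ``ancestor'' hyperelliptic stratum (e.g.\ $\mathcal{H}(n,n,p,p)^{hyp}$ comes from $\mathcal{H}(2n,p,p)^{hyp}$ or $\mathcal{H}(n,n,2p)^{hyp}$ by an $i$-symmetric breaking). The pair $P,P'$ obtained in this way is symmetric under $i$, and sits on an $i$-invariant metric disc whose center is fixed by $i$. Since the rotation of the disc by the angle $\theta$ used in Section~\ref{local:move} commutes with $i$ (both are rotations around the same center), the family $(S_\theta)_\theta$ stays inside $\mathcal{C}_{hyp}$, yielding $\tau_{P,P'}\in Mon$.

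For the second item ($\sigma_P$ with $P$ unique of its degree), I would realize $S_b^{hor}$ as the result of an $i$-equivariant bubbling of a handle at $P$. Pick a hyperelliptic surface $S_0$ in the ancestor stratum, where $P$ is replaced by an $i$-fixed singularity $P'$ of degree $\deg(P)-2$, together with a hyperelliptic torus $T\in\mathcal{H}(-\deg(P),\deg(P))$ whose involution fixes both singularities; glue them along $i$-invariant discs around $P'$ and the fixed pole of $T$. The resulting surface lies in $\mathcal{C}_{hyp}$, and the rotation of the glued-in disc of Section~\ref{local:sigma} is again $i$-equivariant, so produces a loop in $\mathcal{C}_{hyp}$ realizing $\sigma_P\in Mon$.

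The main obstacle is to verify that the ancestor hyperelliptic components and the auxiliary torus $T$ are always available, including the base cases where the ancestor has genus zero. This occurs precisely for the genus one hyperelliptic strata $\mathcal{H}(2n,-n,-n)$, $\mathcal{H}(n,n,-2n)$, and $\mathcal{H}(2n,-2n)$. In each of these cases, the genus zero ancestor stratum is automatically connected and admits a symmetric configuration with the required $i$-fixed singularity to break up or to bubble, and the handle $T\in\mathcal{H}(-2n,2n)$ has the needed hyperelliptic involution fixing both singularities, so the $i$-equivariant construction remains available. This reduces to a short case-by-case verification of elementary configurations on $\mathbb{CP}^1$.
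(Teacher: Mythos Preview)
Your approach is correct and is precisely what one expects here: the elementary moves of Sections~\ref{local:move} and~\ref{local:sigma} can be carried out $i$-equivariantly (since on the boundary circle $\mathcal{C}$ both the involution and the disc rotation act as shifts, which commute), so the resulting loops stay in $\mathcal{C}_{hyp}$. The paper itself leaves this proof to the reader, so there is nothing further to compare against; your outline, including the short base-case verification in genus one, is exactly the kind of routine check the author has in mind.
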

\begin{proof}
The proof is easy and left to the reader.
\end{proof}

This lemma, associated to the definition of the invariant gives the following theorem.
\begin{theorem}
Let $\mathcal{H}_{\mathcal{C}_{hyp}}^{hor}$ be a hyperelliptic connected component with marked horizontal separatrices. 
\begin{itemize}
\item If $\mathcal{H}_{\mathcal{C}_{hyp}}^{hor}\subset \mathcal{H}(n,n,p,p)$, for some $n>0$ and $p<-1$, then $\mathcal{H}_{\mathcal{C}_{hyp}}^{hor}$ has $(n+1)|p+1|$ connected components distinguished by the maps $\Phi_{zeroes}$ and $\Phi_{poles}$.
\item If $\mathcal{H}_{\mathcal{C}_{hyp}}^{hor}\subset \mathcal{H}(n,n,2p)$, for some $n>0$ and $p<0$, or $\mathcal{H}_{\mathcal{C}_{hyp}}^{hor}\subset \mathcal{H}(n,n,-1,-1)$ then $\mathcal{H}_{\mathcal{C}_{hyp}}^{hor}$ has $(n+1)$ connected components distinguished by the map $\Phi_{zeroes}$.
\item If $\mathcal{H}_{\mathcal{C}_{hyp}}^{hor}\subset \mathcal{H}(2n,p,p)$, for some $n>0$ and $p<-1$, then $\mathcal{H}_{\mathcal{C}_{hyp}}^{hor}$ has $|p+1|$ connected components distinguished by the maps $\Phi_{poles}$.
\item If $\mathcal{H}_{\mathcal{C}_{hyp}}^{hor}\subset \mathcal{H}(2n,2p)$ for some $n>0$ and $p<-1$ or $\mathcal{H}_{\mathcal{C}_{hyp}}^{hor}\subset \mathcal{H}(2n,-1,-1)$, then $\mathcal{H}_{\mathcal{C}_{hyp}}^{hor}$ is connected.
\end{itemize}
\end{theorem}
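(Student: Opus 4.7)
The plan is to treat each of the four items by matching an upper bound on $[Hor:Mon]$, obtained from the preceding lemma, against a lower bound obtained from the invariants $\Phi_{zeroes}$ and $\Phi_{poles}$. Since the number of connected components of $\mathcal{C}_{hyp}^{hor}$ is exactly $[Hor:Mon]$, matching these bounds suffices.

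For the upper bound, I describe $Hor$ in each case as an explicit product of cyclic groups indexed by the non-simple-pole singularities, and assemble generators via the lemma. Whenever there is a pair of singularities of common degree $n$ (respectively $p$), the element $\tau_{P,P'}$ reduces, modulo the corresponding cyclic order, to $-(\delta_{P}+\delta_{P'})$: using the identity $\tau_{P,Q}=(\deg(P)+\deg(Q)+1)(\delta_{P}+\delta_{Q})$ together with the computations $2n+1\equiv-1\pmod{n+1}$ and $2p+1\equiv-1\pmod{|p+1|}$, so $\tau_{P,P'}$ generates the ``diagonal'' in the two relevant cyclic factors. Whenever a singularity of even degree $2n$ or $2p$ is alone of its degree, $\sigma_{P}=2\delta_{P}\in Mon$ already generates the full cyclic factor, because $|2n+1|$ or $|2p+1|$ is odd and hence $2$ is invertible there. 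Assembling these generators in each of the four strata yields a subgroup of $Mon$ whose index in $Hor$ equals the number announced in the theorem: $(n+1)|p+1|$ in item~1, $n+1$ in item~2, $|p+1|$ in item~3, and $1$ in item~4.

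For the lower bound, I check that the invariants exhaust their ranges. When $\Phi_{zeroes}$ is defined, rotating the marked horizontal separatrix at one of the paired zeroes by $2\pi$ (a legitimate modification of the framing) shifts $\Phi_{zeroes}$ by one step in $\mathbb{Z}/(n+1)$ while leaving $\Phi_{poles}$ untouched; a symmetric statement for $\Phi_{poles}$ ensures that when both invariants are defined the pair $(\Phi_{zeroes},\Phi_{poles})$ is jointly surjective onto $\mathbb{Z}/(n+1)\times\mathbb{Z}/|p+1|$. Since the invariants are locally constant on $\mathcal{C}_{hyp}^{hor}$, this gives the matching lower bounds $(n+1)|p+1|$, $n+1$, and $|p+1|$ respectively. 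In item~4, where no invariant is defined, the generators from the lemma already exhaust $Hor$ and $\mathcal{C}_{hyp}^{hor}$ is connected.

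The main obstacle is the lower-bound step: one must verify carefully, from the definition of $\Phi_{zeroes}$ as a twist angle between the marked separatrix $l_{2}$ at $N_{2}$ and the image $i(l_{1})$ under the hyperelliptic involution, how this angle transforms when one advances a single frame by $2\pi$. This is a direct local computation in the neighborhoods of $N_{1},N_{2}$, exploiting that $i^{*}\omega=-\omega$ acts on horizontal directions by a rotation of $\pi$, so that the twist angle does shift by exactly one cyclic step. The symmetric verification for $\Phi_{poles}$ then ensures that the two invariants distinguish every coset of the subgroup of $Mon$ exhibited above, forcing the upper and lower bounds to coincide in each item.
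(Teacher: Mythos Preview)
Your argument is correct and is precisely the approach the paper has in mind: the paper's own proof reads ``The proof is easy and left to the reader,'' relying on the preceding lemma together with the definitions of $\Phi_{zeroes}$ and $\Phi_{poles}$. Your computation that $\tau_{P,P'}=-(\delta_P+\delta_{P'})$ generates the diagonal in each pair of like-degree cyclic factors, and that $\sigma_P$ generates the full odd-order factor for a lone even-degree singularity, is exactly the intended verification, and your surjectivity check for the invariants matches their description as twist angles.
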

\begin{proof}
The proof is easy and left to the reader.
\end{proof}

 \section{Zero genus}\label{zero:genus}
Let  $\mathcal{H}=\mathcal{H}(n_1,\dots n_r)$ be a stratum of genus zero translation surfaces.
In this section, we count the number of connected components of $\mathcal{H}^{{hor}}(n_1,\dots ,n_r)$ and define a topological invariant distinguishing these connected components.

We assume that  there are no simple poles. Then, for $i\neq j$, we denote by $N_{ij}$ the (positive) integer:
$$N_{ij}=\gcd\left(\{n_k\}_{k\notin \{i,j\}} \cup \{n_i+1,n_j+1\}\right).$$

Let $S\in \mathcal{H}^{hor}(n_1,\dots ,n_r)$, and denote $P_1,\dots ,P_r$ the (marked) singularities of degree $n_1,\dots ,n_r$ respectively. For any $i<j$, let $\gamma_{ij}$ be a path joining $P_i$ to $P_j$ according to the marked horizontal separatrices (as in Section~\ref{topo:inv}). Then $ind(\gamma_{ij})$ is an integer and $ind(\gamma_{ij})\mod N_{ij}$ does not depend on the choice of $\gamma_{ij}$ (only on the choice of marked directions).

Now we define $\Phi(S)$ as:
$$\Phi(S)=\left( ind(\gamma_{ij})\right)_{i<j}\in \prod_{i<j} \mathbb{Z}/N_{ij}\mathbb{Z}.$$

The map $\Phi$ is clearly a locally constant map, and hence, an invariant of connected components of $\mathcal{H}^{hor}(n_1,\dots ,n_r)$. Note that $\Phi$ depends implicitly on the choice of the ordering of the singularities.
%
Note that the map $Sp$ is also well defined if there are some odd degree singularities.

\begin{theorem}\label{th:genus0}
Let  $\mathcal{H}=\mathcal{H}(n_1,\dots n_r)$ be a stratum of genus zero translation surfaces.
\begin{itemize}
\item If there exists $i_0\in \{1,\dots ,r\}$ such that $n_{i_0}=-1$, then $\mathcal{H}^{hor}$ is connected.
\item If all $n_i$ are different from $-1$ and if there are at most two odd degree singularities, then there are $N=\prod_{i<j} N_{ij}$ connected components of $\mathcal{H}^{hor}$, and two elements $S_1$ and $S_2$ of $\mathcal{H}^{hor}$ are in the same connected component if and only if $\Phi(S_1)=\Phi(S_2)$.
\item Otherwise, there are $2N$ connected components  of $\mathcal{H}^{hor}$, and two elements $S_1$ and $S_2$ of $\mathcal{H}^{hor}$ are in the same connected component if and only if $\Phi(S_1)=\Phi(S_2)$ and $Sp(S_1)=Sp(S_2)$.
\end{itemize}
\end{theorem}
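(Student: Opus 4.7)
The plan is to compute the index $[Hor:Mon]$ introduced in Section~\ref{mod:space:framed} using the generators $\rho = \sum_i \delta_{P_i}$ and $\tau_{P_i,P_j}$ of $Mon$ supplied by Proposition~\ref{prop:g0}, and to compare with the image of the invariants $\Phi$ and $Sp$.

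If some $n_{i_0} = -1$ then $\delta_{P_{i_0}} = 0$ in $Hor$ by convention, so Proposition~\ref{prop:g0} applied to the pair $(P_{i_0}, P_j)$ gives $\tau_{P_{i_0}, P_j} = -\delta_{P_j} \in Mon$ for every $j$, hence $Mon = Hor$ and $\mathcal{H}^{hor}$ is connected. In all other cases I assume $n_i \ne -1$ for every $i$.

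Next I would introduce the group homomorphism $\phi : Hor \to \prod_{i<j}\mathbb{Z}/N_{ij}\mathbb{Z}$ describing how $\Phi$ varies under a change of frame: a short local computation, inserting a rotating arc near an endpoint of $\gamma_{ij}$ when one shifts a marked separatrix by one step, shows that $\phi(\delta_{P_k})$ is supported on the components $(i,j)$ with $k\in\{i,j\}$ and takes opposite signs at the two endpoints. A routine verification using $N_{ij}\mid n_i+1, n_j+1$ and $N_{ij}\mid n_k$ for $k\notin\{i,j\}$ then yields $\phi(\rho)=0$ and $\phi(\tau_{ij})=0$, so $Mon\subseteq\ker\phi$. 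I would next establish surjectivity of $\phi$ by inductively solving the system $x_j - x_i \equiv c_{ij} \pmod{N_{ij}}$; the only obstructions are three-cycle compatibilities modulo $\gcd(N_{ij}, N_{jk}, N_{ik})$, and since this $\gcd$ divides both $n_l$ and $n_l+1$ for some index $l$ it must equal $1$. Surjectivity gives $[Hor:\ker\phi] = N$, whence $\mathcal{H}^{hor}$ has at least $N$ connected components.

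The substantive step is the matching inclusion $\ker\phi \subseteq \langle\rho,\tau_{ij}\rangle$. I would argue combinatorially: starting from $x\in\ker\phi$ (equivalently $x_i\equiv x_j\pmod{N_{ij}}$ for all $i<j$), first use a multiple of $\rho$ to normalize $x_1=0$, then successively invoke the $\tau_{ij}$'s, which couple exactly the coordinates $i$ and $j$ with coefficients $n_j$ and $n_i$, to clear the remaining coordinates; the divisibilities defining $N_{ij}$ guarantee each step can be completed inside the cyclic group $\mathbb{Z}/|n_k+1|\mathbb{Z}$. This settles the ``at most two odd degree singularities'' case, giving $[Hor:Mon]=N$ and proving that $\Phi$ classifies the components. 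For the remaining case of at least four odd degree singularities, $Sp$ becomes a genuine additional $\mathbb{Z}/2$-invariant: I would verify that each generator $\rho, \tau_{ij}$ preserves $Sp$ by inspecting the local surgeries of Section~\ref{elem:moves} exactly as in the positive-genus argument of Section~\ref{pos:genus}, and construct a local surgery that flips $Sp$ without affecting $\Phi$, thereby doubling the lower bound to $2N$ and matching the upper bound.

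The main obstacle I expect is the explicit identification $\langle\rho,\tau_{ij}\rangle=\ker\phi$: although the index count forces equality once both inclusions are in hand, writing a generic kernel element as a combination of $\rho$ and the $\tau_{ij}$'s requires careful bookkeeping with several cyclic group orders simultaneously. A secondary difficulty, specific to genus zero, is showing that $Sp$ is genuinely independent of $\Phi$ when there are at least four odd singularities: with no symplectic basis available, one must rely on Remark~\ref{rem:function} and the pairing-dependent definition to exhibit two framed surfaces with the same $\Phi$ but opposite $Sp$.
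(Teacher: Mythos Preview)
Your overall architecture matches the paper's: the simple-pole case is handled identically, the inclusion $Mon\subseteq\ker\phi$ is exactly the statement that $\Phi$ (and $Sp$) are invariants, and your surjectivity argument for $\phi$ is a variant of Lemma~\ref{lem:Phi:surj}. The difference, and the real gap, is at the reverse inclusion $\ker\phi\subseteq\langle\rho,\tau_{ij}\rangle$.

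Your proposed ``normalize $x_1=0$ with $\rho$, then clear the remaining coordinates one at a time with the $\tau_{ij}$'' does not work as written. Adding $a\tau_{1j}=a(n_j\delta_{P_1}+n_1\delta_{P_j})$ changes $x_j$ by $an_1$ and simultaneously disturbs $x_1$ by $an_j$; clearing $x_j$ in $\mathbb{Z}/(n_j+1)\mathbb{Z}$ already requires $\gcd(n_1,n_j+1)\mid x_j$, and after that $x_1$ is no longer zero. The divisibilities defining $N_{ij}$ do \emph{not} by themselves guarantee that this cascade terminates. What the paper does instead is isolate, for each fixed $k$, the specific $\tau$-combinations
\[
(n_i+1)\tau_{P_kP_i}=n_i(n_i+1)\,\delta_{P_k},\qquad
n_j\tau_{P_kP_i}+n_i\tau_{P_kP_j}-n_k\tau_{P_iP_j}=2n_in_j\,\delta_{P_k},
\]
which live purely in the $k$-th factor, and then carries out a genuine number-theoretic gcd computation (Lemma~\ref{elements:g0}) to show that the subgroup of $\mathbb{Z}/(n_k+1)\mathbb{Z}$ they generate is exactly $\varepsilon_k\bigl(\prod_{i\neq k}N_{ki}\bigr)\mathbb{Z}/(n_k+1)\mathbb{Z}$, with $\varepsilon_k\in\{1,2\}$ depending on parity. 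This is the step your sketch is missing; it is where the pairwise coprimality of the $N_{ij}$ and the prime-by-prime valuation analysis enter. The lower bound on $|Mon|$ then follows by feeding these elements and the images $\Psi(-\tau_{P_iP_j})=E_{ij}+E_{ji}$ into a Chinese remainder map $\Psi:Hor\to\prod_{i,j}\mathbb{Z}/N_{ij}\mathbb{Z}$ and counting.

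In the case of at least four odd singularities the same mechanism, with $\varepsilon_k=2$ for odd $n_k$, gives the matching bound $[Hor:Mon]=2N$; your proposal to ``construct a local surgery that flips $Sp$ without affecting $\Phi$'' addresses only the surjectivity side (at least $2N$ components) and leaves the harder direction (at most $2N$) untouched.
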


Note that the first part is obvious, since $\tau_{P_{i_0},Q}=\delta_Q$ is in $Mon$. So, from now, we assume that there are no simple poles.

\begin{lemma} \label{lem:Phi:surj}
The map $\Phi$ is surjective. Furthermore, if there are at least three odd degree singularities, the map $\Phi\times Sp$  is surjective.
\end{lemma}

\begin{proof}
We first prove that $\Phi$ is surjective. 
Let $i_0\in \{1,\dots ,r\}$. When we replace the marked horizontal separatrix $l_{i_0}$ corresponding to $P_{i_0}$ by the one obtained by rotating $l_{i_0}$ by $2\pi$ counterclockwise, it adds to $\Phi(S)$ the element $\eta_{i_0}$ whose value is
\begin{itemize}
\item $-1$  in the factor  $\mathbb{Z}/N_{i_0j}\mathbb{Z}$ for each $j>i_0$.
\item $1$  in the factor  $\mathbb{Z}/N_{ji_0}\mathbb{Z}$ for each $j<i_0$.
\item 0 in the other factors.
\end{itemize}
Since the integers $\{N_{ij}\}_{i< j}$ are pairwise relatively prime, the element $\eta_{i_0}$ generates $\prod_{j\neq i_0} \mathbb{Z}/N_{i_0j}\mathbb{Z}$. In particular, $\eta_1,\dots ,\eta_r$ generates the group $\prod_{i<j} \mathbb{Z}/N_{ij}\mathbb{Z}$. So the map $\Phi$ is surjective.

When there are at least three odd degree singularities, $N=\prod_{i<j} N_{ij}$ is odd. In particular, choosing $i$ so that $n_i$ is odd, and rotating $l_i$ by $2\pi\prod_{j\neq i} N_{ij}$ does not change $\Phi(S)$, but changes $Sp(S)$ by $1=\prod_{j\neq i} N_{ij}$ (mod 2), so the map $\Phi\times Sp$ is surjective.
\end{proof}

\begin{lemma}\label{elements:g0}
Let $k\in \{1,\dots ,r\}$, for each $i,j\neq k$, with $i\neq j$ the following elements are in the group $Mon$. 
\begin{itemize}
\item $n_i(n_i+1) \delta_{P_k}$,
\item $2n_i n_j \delta_{P_k}$.
\end{itemize}
Furthermore, the subgroup of $Mon$ generated by these elements, seen as a subgroup of $\mathbb{Z}/(n_k+1)\mathbb{Z}$, is the subgroup generated by $\varepsilon_k\prod_{i\neq k}N_{ki}$, where
$\varepsilon_k=2$ if $n_k$ is odd and there are at least two other odd singularities, and $\varepsilon_k=1$ otherwise.
\end{lemma}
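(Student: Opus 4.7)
The plan is to first produce the two families of elements by explicit $\mathbb{Z}$-linear combinations of the $\tau_{P,Q}$'s already known to be in $Mon$ by Proposition~\ref{prop:g0}. Multiplying $\tau_{P_k,P_i}=n_i\delta_{P_k}+n_k\delta_{P_i}$ by $n_i+1$ kills the $\delta_{P_i}$-coordinate modulo $(n_i+1)\delta_{P_i}=0$ in $Hor$, leaving $n_i(n_i+1)\delta_{P_k}\in Mon$. For the second family, a straightforward expansion gives
\[
n_k\,\tau_{P_i,P_j}-n_i\,\tau_{P_j,P_k}-n_j\,\tau_{P_i,P_k}=-2\,n_in_j\,\delta_{P_k},
\]
so $2n_in_j\,\delta_{P_k}\in Mon$.

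For the second assertion, the subgroup generated projects onto $d\,\mathbb{Z}/(n_k+1)\mathbb{Z}$ where
\[
d=\gcd\bigl(n_k+1,\ \{n_i(n_i+1)\}_{i\neq k},\ \{2n_in_j\}_{\substack{i\neq j\\ i,j\neq k}}\bigr),
\]
and we must show $d=\varepsilon_k\prod_{i\neq k}N_{ki}$. I would first record two elementary facts: (i) using $\sum_l n_l=-2$, one has $(n_k+1)+(n_i+1)=-\sum_{l\neq k,i}n_l$, which simplifies the definition to $N_{ki}=\gcd(\{n_l\}_{l\neq k,i},\,n_k+1)$; (ii) the integers $\{N_{ki}\}_{i\neq k}$ are pairwise coprime, since a prime $p$ dividing both $N_{ki}$ and $N_{kj}$ would divide all $n_l$ with $l\neq k$, hence $p\mid n_k+2$ as well as $p\mid n_k+1$, a contradiction. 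In particular $\prod_{i\neq k}N_{ki}$ divides $n_k+1$.

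The core of the argument is a prime-by-prime comparison $v_p(d)=v_p\bigl(\varepsilon_k\prod_{i\neq k}N_{ki}\bigr)$. For $p$ odd, the condition $v_p(d)>0$ forces $p\mid n_i(n_i+1)$ for every $i\neq k$; the coprimality of consecutive integers together with $p\mid 2n_in_j$ then leaves at most one index $i_0$ with $p\nmid n_{i_0}$ (necessarily $p\mid n_{i_0}+1$), and the case ``no such $i_0$'' is ruled out by $\sum n_l=-2$. One checks that $v_p(N_{ki})=0$ for $i\neq i_0$, and the identity $(n_k+1)+(n_{i_0}+1)\equiv 0 \pmod{\gcd\{n_l\}_{l\neq k,i_0}}$ implies $v_p(n_{i_0}+1)\geq v_p(N_{ki_0})$, giving $v_p(d)=v_p(N_{ki_0})=v_p(\prod N_{ki})$ (with $\varepsilon_k=1$ since $p$ is odd). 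For $p=2$, the parity of $\sum_{i\neq k}n_i$ forces the number $|B|$ of odd singularities among $i\neq k$ to have the same parity as $n_k$, and $v_2(d)>0$ requires $n_k$ odd, hence $|B|$ odd. The subcases $|B|=1$ and $|B|\geq 3$ correspond exactly to $\varepsilon_k=1$ and $\varepsilon_k=2$: in the first, the analysis mirrors the odd-prime case; in the second, any pair $i,j\in B$ produces $v_2(2n_in_j)=1$, forcing $v_2(d)=1$, while $v_2(N_{ki})=0$ for every $i$ (since $|B\setminus\{i\}|\geq 2$), so $v_2(\prod N_{ki})=0$ and the factor $\varepsilon_k=2$ accounts exactly for the discrepancy.

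The main obstacle is the $p=2$ analysis, which is responsible for the otherwise mysterious factor $\varepsilon_k$. The delicate point is to track carefully how the parity of $|B|$ (forced by $\sum n_l=-2$) interacts with the $2n_in_j$ generators to produce $v_2(d)=1$ precisely when there are at least two odd singularities apart from $P_k$.
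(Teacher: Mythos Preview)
Your proposal is correct and follows essentially the same route as the paper. You produce the two families by the same $\mathbb{Z}$-linear combinations of the $\tau$'s (the paper writes them as $(n_i+1)\tau_{P_kP_i}$ and $n_j\tau_{P_kP_i}+n_i\tau_{P_kP_j}-n_k\tau_{P_iP_j}$, which is your identity up to sign), and then carry out the same prime-by-prime comparison of $d_k$ with $\varepsilon_k\prod_{i\neq k}N_{ki}$, with the same case split at $p=2$; your preliminary simplification $N_{ki}=\gcd(\{n_l\}_{l\neq k,i},\,n_k+1)$ and the explicit parity bookkeeping via $|B|$ are minor streamlinings rather than a different argument.
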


\begin{proof}
A direct computation shows that the element $n_i(n_i+1) \delta_{P_k}$ is given by $(n_i+1)\tau_{P_k,P_i}$, and the element $2n_i n_j \delta_{P_k}$ is given by
$n_j \tau_{P_k,P_i}+n_i \tau_{P_k,P_j}-n_k\tau_{P_i,P_j}.$

The subgroup of $\mathbb{Z}/(n_k+1)\mathbb{Z}$ generated by these elements is $<d_k>$, where:
$$d_k= \gcd\left(\{2n_in_j\}_{i\neq j \neq k}\cup\{n_i(n_i+1)\}_{i\neq k}\cup \{n_k+1\}\right).$$

Let $p>2$ be a prime number that divides $d_k$, $\alpha=\nu_p(d_k)$ its p-adic valuation. By definition of $d_k$, $p^\alpha$ divides $n_k+1$, and for each $i\neq k$, $p^\alpha$ divides $n_i$ or $n_i+1$. It cannot always divides  $n_i$ since 
$(n_k+1)+\sum_{i\neq k} n_i=-1$. Also, if there are two indices $i\neq j$, with $i,j\neq k$ such that $p^\alpha$ does not divide $n_i$ and $n_j$, then $p^\alpha$ does not divide $2n_in_j$, which contradicts $p^\alpha|d_k$. Hence there is exactly one index $i\neq k$ such that $p^\alpha$ does not divide $n_i$, hence, divides $n_i+1$. So, $p^\alpha|N_{ki}$.

Conversely, let $p>2$ be a prime number and $\alpha=\nu_p(\varepsilon_k \prod_{i\neq k} N_{ki})$. Since the $\{N_{ki}\}_{i}$ are pairwise relatively prime, there is an index $i_0$ such that $p^\alpha|N_{ki_0}$. Hence, we easily see that $p^\alpha$ divides $d_k$.
\medskip

We have proven that $\nu_{p}(d_k)=\nu_p(\varepsilon_k \prod_{i\neq k} N_{ki} )$ for $p>2$. Now, we prove the same for the case $p=2$.
If $n_k$ is even, then both $d_k$ and $\varepsilon_k\prod_{i\neq k}N_{ki}$ are odd.

Assume that $n_k$ is odd and there are at least three odd singularities. Denote by $i_0,j_0\neq k$ the indices of two odd degree singularities different from $P_k$. We see that the 2-adic valuation of $d_k$ is 1 by considering $2n_{i_0}n_{j_0}$, and the 2-adic valuation of $\varepsilon_k \prod_{i\neq k} N_{ki}$ is also 1 since $\varepsilon_k=2$ and all the $N_{ki}$ are odd.

Assume that $n_k$ is odd and there are exactly two odd degree singularities $P_k,P_{i_0}$ on the surface.
Let $\alpha>0$ such that $2^\alpha|d_k$. Then $2^\alpha$ divides $n_k+1$, and $n_{i}(n_{i}+1)$ for each $i\neq k$. In particular, it divides $(n_{i_0}+1)$ (since $n_{i_0}$ is odd), and for each $i\notin\{k,i_0\}$, it divides $n_{i}$ (since $n_{i}+1$ is odd). So, $2^{\alpha}|N_{ki_0}$.

Conversely, let $\alpha>0$ such that $2^\alpha|\prod_{i\neq k} N_{ki}$. The integer $N_{ki}$ is even if and only if $i=i_0$. Hence, $2^{\alpha}|N_{ki_0}$ and $2^\alpha$ divides each $n_i$, for $i\notin\{k,i_0\}$. So, it divides $n_in_j$, for each $i,j\neq k$ with $i\neq j$. Finally, $2^\alpha|d_k$.

Hence, we have proven that $d_k=\varepsilon_k \prod_{i\neq k} N_{ki}$.
\end{proof}

\begin{proof}[Proof of Theorem~\ref{th:genus0}]
We first assume that there are at most two odd degree singularities, so that for each $i$, $\varepsilon_i=1$  in the above lemma. In order to simplify notation, we set 
$N_{ii}=1$ for each $i$.

From Lemma~\ref{lem:Phi:surj}, $Mon$ has at most 
$$N=\frac{\prod_{i\in\{1,\dots ,r\}}|n_i+1|}{\prod_{1\leq i<j \leq r}N_{ij}}.$$
elements. The theorem will follow if we prove that $Mon$ has exactly this number of elements.

Since for each $i,j$, $N_{ij}|(n_i+1)$, there is a canonical map $ \mathbb{Z}/{(n_i+1)\mathbb{Z}}\to \mathbb{Z}/N_{ij}\mathbb{Z}$, so it induces a canonical map:
$$\Psi:
\begin{array}{ccc}
\prod_{i=1}^r \mathbb{Z}/{(n_i+1)}\mathbb{Z} &\to& \prod_{i,j\in\{1,\dots ,r\}} \mathbb{Z}/N_{ij}\mathbb{Z} \\
(x_i)_{i} &\mapsto&  (x_i \mod N_{ij})_{i,j}
\end{array}.
$$
Since $\{N_{ij}\}_{i,j}$ are relatively prime, by the Chinese Remainder Lemma, the kernel of $\Psi$ is $\prod_{i} u_i\mathbb{Z}/(n_i+1)\mathbb{Z}$, where $u_i=\prod_{j\neq i} N_{ij}$ and is a subgroup of $Mon$ by the previous lemma.

For a pair $(i_0,j_0)$ of distinct indices, the image by $\Psi$ of the element $-\tau_{P_{i_0},P_{j_0}}$ is the element $E_{i_0j_0}+E_{j_0i_0}$, where $E_{ij}$ is the element which is 1 for the indices $(i,j)$ and 0 everywhere else.

In particular, the image $\Phi(Mon)$ contains at least $\prod_{i<j} N_{ij}$ elements. So $Mon$ contains at least,
$$\frac{\prod_{i=1}^r|n_i+1|}{\prod_{i=1}^r u_i} \prod_{1\leq i<j\leq r}N_{ij}=\frac{\prod_{i=1}^r|n_i+1|}{\prod_{1\leq i<j\leq r}N_{ij}} $$
elements. Therefore it contains exactly that many elements.

Now we assume that there are at least three odd degree singularities. In order to simplify the notation, we define, for $i\neq 0$ $N_{i0}=N_{0i}=\varepsilon_i$, where, $\varepsilon_i=2$ if $n_i$ is odd and $\varepsilon_i=1$ otherwise.
From Lemma~\ref{lem:Phi:surj}, $Mon$ has at most 
$$N'=\frac{1}{2}\frac{\prod_{i=1}^r|n_i+1|}{\prod_{1\leq i<j \leq r}N_{ij}}$$
elements. We proceed as before, but replace the map $\Psi$ by the map $\tilde{\Psi}$
$$\tilde{\Psi}:
\begin{array}{ccc}
\prod_{i=1}^r \mathbb{Z}/{(n_i+1)}\mathbb{Z} &\to& \prod_{i=1}^r 
\prod_{j=0}^r \mathbb{Z}/N_{ij}\mathbb{Z}
\\
(x_i)_{i}& \mapsto& (x_i \mod N_{ij})_{(i,j)\in \{1,\dots ,r\}\times\{0,\dots ,r\}}
\end{array}.
$$

Since all $N_{ij}$ are odd and pairwise relatively prime, we see as before that the kernel is $\prod_{i} u_i\mathbb{Z}/(n_i+1)\mathbb{Z}$, where $u_i=\prod_{j=0}^r N_{ij}=\varepsilon_i \prod_{j\neq i} N_{ij}$,  and is a subgroup of $Mon$ by the previous lemma.

If $n_{i_0}$ or $n_{j_0}$ is even, then  the image by $\tilde{\Psi}$ of the element $-\tau_{P_{i_0},P_{j_0}}$ is the element $E_{i_0j_0}+E_{j_0i_0}$. If both $n_{i_0}$ and $n_{j_0}$ are odd, then we get
$E_{i_0j_0}+E_{j_0i_0}+E_{i_00}+E_{j_00}$. Therefore $\tilde{\Psi}((n_{i_0}+n_{j_0})\tau_{P_{i_0},P_{j_0}})=2(E_{i_0j_0}+E_{j_0i_0})$. Since $N_{i_0j_0}$ is odd, there is a multiple of $\tau_{P_{i_0},P_{j_0}}$ whose image by $\tilde{\Psi}$ is $E_{i_0j_0}+E_{j_0i_0}$. Finally, we obtain that the image by $\tilde{\Psi}$ of $Mon$ contains at least
$2^{n-1}\prod_{i<j}N_{ij}$ elements, where $n$ is the number of odd degree singularities, and therefore, $Mon$ has at least $N'$ elements. Hence it contains exactly that many elements.
\end{proof}

\section{Partially marked surfaces}
Coming back to the initial motivation of this paper. It is natural to study the moduli space of surfaces where only a subset of the singularities have a marked horizontal separatrix. 

Let $g\geq 1$. Let $\mathcal{H}(n_1^{\alpha_1},\dots ,n_r^{\alpha_r})$ be a stratum of the moduli space of genus $g$ meromorphic differentials, and let $\mathcal{C}\subset\mathcal{H}(n_1^{\alpha_1},\dots ,n_r^{\alpha_r})$ be a nonhyperelliptic connected component. Let $\{n_1^{\beta_1},\dots ,n_r^{\beta_r}\}\subset \{n_1^{\alpha_1},\dots ,n_r^{\alpha_r}\}$, and let $\mathcal{H}_\mathcal{C}^{part}$ be the corresponding moduli space of partially framed surfaces.

The following result follows easily from Theorem~\ref{MT:g}. 
\begin{corollary}
Assume that there are non-marked odd degree singularities, then $\mathcal{H}_\mathcal{C}^{part}$ is connected. Otherwise, $\mathcal{H}_\mathcal{C}^{part}$ has the same number of connected components as $\mathcal{H}_\mathcal{C}^{hor}$.
\end{corollary}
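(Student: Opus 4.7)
The plan is to analyze the forgetful covering $\pi : \mathcal{C}^{hor} \to \mathcal{C}^{part}$, whose deck transformation group is
$$G = \prod_{P \text{ unmarked},\, \deg(P) \neq -1} \mathbb{Z}/h_P\mathbb{Z}.$$
Since $\mathcal{C}^{part} = \mathcal{C}^{hor}/G$, the set $\pi_0(\mathcal{C}^{part})$ is in bijection with the quotient of $\pi_0(\mathcal{C}^{hor})$ by the induced $G$-action, and counting components of $\mathcal{C}^{part}$ reduces to understanding this action.

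By Theorem~\ref{MT:g}, $\pi_0(\mathcal{C}^{hor})$ has cardinality one or two. When it is one, the statement is trivial and consistent with the ``otherwise'' alternative of the corollary. The interesting case is $|\pi_0(\mathcal{C}^{hor})|=2$, which forces the existence of odd degree singularities and the absence of simple poles; the two components are then distinguished by the invariant $Sp$ of Section~\ref{topo:inv}.

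The key computation is the effect on $Sp$ of the canonical generator of $\mathbb{Z}/h_P\mathbb{Z}$ attached to an unmarked singularity $P$: it leaves the underlying flat surface untouched and replaces the marked horizontal separatrix at $P$ by its cyclic successor, i.e.\ the separatrix obtained from it by a $2\pi$ rotation in the local cone at $P$. If $\deg(P)$ is even, $P$ does not appear in the formula defining $Sp$, so $Sp$ is invariant. If $\deg(P)$ is odd, then $P$ belongs to exactly one ordered pair $(P_j^{-},P_j^{+})$ of the fixed pairing of odd singularities; the auxiliary smooth path $\tilde{\gamma}_j$ defining $ind(\gamma_j)$ is tangent at its $P$-endpoint to the marked separatrix, and rotating that tangent direction by $2\pi$ adds $\pm 1$ to the winding number of the Gauss map of $\tilde{\gamma}_j$. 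The other terms in $Sp$ are unaffected, so $Sp$ flips by $1 \mod 2$.

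Combining these gives the dichotomy. If some unmarked singularity has odd degree, the corresponding generator already exchanges the two components of $\mathcal{C}^{hor}$, hence $G$ acts transitively and $\mathcal{C}^{part}$ is connected. Otherwise every generator of $G$ preserves $Sp$, the invariant descends to a non-constant locally constant function on $\mathcal{C}^{part}$, and $\mathcal{C}^{part}$ has the same two components as $\mathcal{C}^{hor}$. The main subtlety will be to verify that this local effect on $Sp$ is independent of the auxiliary choices (symplectic basis and the connecting curves $\gamma_j$) entering the definition of $Sp$, which is guaranteed by Lemma~\ref{lemme:gamma} together with the fact that the generator only modifies the tangent direction at the $P$-endpoint of the single curve $\tilde{\gamma}_j$.
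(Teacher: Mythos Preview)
Your argument is correct and is precisely the computation the paper has in mind when it says the corollary ``follows easily from Theorem~\ref{MT:g}'': quotienting $\mathcal{C}^{hor}$ by the separatrix rotations at the unmarked singularities and checking which of these rotations flip $Sp$. One small omission: when $|\pi_0(\mathcal{C}^{hor})|=2$ you assert this forces the absence of simple poles, but Theorem~\ref{MT:g} also allows the case of exactly two simple poles together with an odd-degree zero, where $Sp$ is the modified invariant defined after gluing the two infinite cylinders; your tangent-direction computation for $Sp$ goes through verbatim there (the simple poles carry no separatrix, so they contribute nothing to $G$).
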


Assume now that the genus is zero, and denote by $\{P_1,\dots ,P_r\}$ the singularities, with $\{P_1,\dots ,P_s\}$, $s<r$ the marked ones. 
Now we define $\Phi_{part}(S)$ as:
$$\Phi_{part}(S)=\left( ind(\gamma_{ij})\right)_{1\leq i<j\leq s}\in \prod_{1\leq i<j \leq s} \mathbb{Z}/N_{ij}\mathbb{Z}.$$
\emph{i.e.} we restrict the map $\Phi$ to the marked singularities.

The following Theorem is an easy corollary of Theorem~\ref{th:genus0}.
\begin{corollary}
Let  $\mathcal{H}=\mathcal{H}(n_1,\dots n_r)$ be a stratum of genus zero translation surfaces, such that $\mathcal{H}^{hor}$ is not connected.
\begin{itemize}
\item If there are some non-marked odd degree singularities, or if there are at most two odd degree singularities, then two elements $S_1$ and $S_2$ of $\mathcal{H}^{part}$ are in the same connected component if and only if $\Phi_{part}(S_1)=\Phi_{part}(S_2)$.
\item Otherwise, two elements $S_1$ and $S_2$ of $\mathcal{H}^{hor}$ are in the same connected component if and only if $\Phi_{part}(S_1)=\Phi_{part}(S_2)$ and $Sp(S_1)=Sp(S_2)$.
\end{itemize}
\end{corollary}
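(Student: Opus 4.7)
The strategy is to lift the problem from $\mathcal{H}^{part}$ to $\mathcal{H}^{hor}$ via the forgetful covering $\pi_u\colon\mathcal{H}^{hor}\to\mathcal{H}^{part}$ whose deck group is $K=\prod_{k>s}\mathbb{Z}/(n_k+1)\mathbb{Z}$, after naming the marked singularities $P_1,\dots,P_s$ and the unmarked ones $P_{s+1},\dots,P_r$. Writing $Hor=Hor_{part}\times K$ and letting $\pi\colon Hor\to Hor_{part}$ be the projection onto the marked factors, a standard covering space argument gives $\pi_0(\mathcal{H}^{part})\cong Hor_{part}/\pi(Mon)$. First I would note that $\Phi_{part}$ is defined on $\mathcal{H}^{part}$ (it only uses separatrices at marked singularities) and is locally constant by the same reasoning as in Section~\ref{zero:genus}; when all odd-degree singularities are marked, $Sp$ likewise descends to $\mathcal{H}^{part}$. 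This settles the ``only if'' direction in both bullets.

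The heart of the proof is to establish
\[
\pi(Mon)=\ker\Phi_{part}\quad\text{(first bullet)},\qquad \pi(Mon)=\ker\Phi_{part}\cap\ker Sp_{part}\quad\text{(second bullet)},
\]
as subgroups of $Hor_{part}$. Granting this, the ``if'' direction is immediate, since two elements of $\mathcal{H}^{part}$ with the same invariants give equal cosets of $\pi(Mon)$, hence lie in the same component. Write $\Phi_{lin}\colon Hor\to\prod_{i<j}\mathbb{Z}/N_{ij}\mathbb{Z}$ for the linearization $(x_i)\mapsto(x_j-x_i\bmod N_{ij})$ implicit in the proof of Theorem~\ref{th:genus0}, which identifies $Mon$ with $\ker\Phi_{lin}$ when there are at most two odd singularities, and with $\ker\Phi_{lin}\cap\ker Sp_{lin}$ otherwise. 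The inclusion $\pi(Mon)\subseteq$ the claimed kernel is immediate in both bullets (in the second, because $Sp_{lin}$ depends only on coordinates at odd singularities, all of which are marked there).

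For the reverse inclusion, given $(x_1,\dots,x_s)\in\ker\Phi_{part}$ I would extend it to an element of $Mon$. For each $k>s$, the pairwise coprimality of $\{N_{kj}\}_{j\neq k}$ (used in the proof of Lemma~\ref{lem:Phi:surj}) together with the divisibilities $N_{kj}\mid n_k+1$ gives, via CRT, a canonical factor $\prod_{j\neq k}\mathbb{Z}/N_{kj}\mathbb{Z}$ of $\mathbb{Z}/(n_k+1)\mathbb{Z}$. The condition $(x_1,\dots,x_r)\in\ker\Phi_{lin}$ amounts to $x_k\equiv x_j\pmod{N_{kj}}$ for every pair. The constraints coming from $j\leq s$ prescribe the CRT components $x_k\bmod N_{kj}$ in a mutually consistent way (they live in coprime factors), and constraints between two unmarked indices $k,j>s$ can be satisfied edge-by-edge by choosing $x_k\bmod N_{kj}$ and $x_j\bmod N_{kj}$ independently of the other CRT components of $x_k,x_j$. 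This produces an extension with $\Phi_{lin}=0$.

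To complete the first bullet when there are at least three odd singularities but some unmarked, I would kill a possibly nonzero $Sp_{lin}$ of the extension by rotating the separatrix at an unmarked odd $P_k$ by $2\pi\prod_{j\neq k}N_{kj}$: this element of $K\cap\ker\Phi_{lin}$ toggles $Sp_{lin}$ by the computation in the proof of Lemma~\ref{lem:Phi:surj}. For the second bullet, all unmarked singularities are of even degree, so $K\subseteq\ker Sp_{lin}$, and $Sp_{lin}$ of the extension equals $Sp_{part}(x_1,\dots,x_s)$, which vanishes by hypothesis; the extension then automatically lies in $Mon$. The main obstacle is the CRT-extension step: it is not deep but requires careful bookkeeping of the coprimality pattern of the $N_{kj}$'s, which is exactly where the arithmetic of the degrees enters the argument.
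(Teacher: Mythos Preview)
The paper gives no proof of this corollary beyond the remark that it follows easily from Theorem~\ref{th:genus0}, so there is nothing to compare against at the level of strategy. Your argument is correct and is precisely the natural way to extract the corollary: the reduction to $\pi_0(\mathcal{H}^{part})\cong Hor_{part}/\pi(Mon)$, the identification $Mon=\ker\Phi_{lin}$ (resp.\ $\ker\Phi_{lin}\cap\ker Sp_{lin}$) implicit in the proof of Theorem~\ref{th:genus0}, and the CRT extension step all go through. The extension step uses exactly the two arithmetic facts already established in the paper---pairwise coprimality of the $N_{ij}$ and the divisibility $\prod_{j\neq k}N_{kj}\mid n_k+1$---and your $Sp$-toggling at an unmarked odd singularity is the mechanism from the proof of Lemma~\ref{lem:Phi:surj}. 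The bookkeeping you flag as the ``main obstacle'' is routine once one observes that for each unordered pair $\{k,k'\}$ of unmarked indices the single congruence $x_k\equiv x_{k'}\pmod{N_{kk'}}$ lives in CRT factors of $x_k$ and $x_{k'}$ that are disjoint from all other constraints, so setting both to $0$ resolves them simultaneously.
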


\appendix
\section{More about connected sums}
We look back at the construction described in Section~\ref{connected:sum}. 
Let $S, S'$ be translation surfaces, and let $N\in S$, be a singularity of degree $n\geq 0$ and let $N'\in S'$ be a singularity of degree $n'=-2-n<0$ with zero residue.

First, we observe that once the scaling of $S'$, the neighborhood $U$ of $N$ and the pointed neighborhood $V$ of $N'$ are fixed, there remains a finite number of possibilities for gluing together $S\backslash U$ and $S'\backslash V$. There are exactly $n+1$ choices. If $S,S'$ are translation surfaces with marked horizontal separatrices, we can fix this choice by imposing that the separatrix corresponding to $N$ coincides with the separatrix corresponding to $N'$ (rotated by $\pi$). Once this combinatorial choice is fixed, the other choices involved in the construction (scaling, $U$, $V$) give a connected set of surfaces.

One would like to glue $S$ and $S'$ along several pairs $(N_i,N'_i)\in S\times S'$. We assume that for each $i$ the singularity $N_i$ has degree $n_i>0$, and the singularity $N'_i$ has degree $-2-n_i<0$ with zero residue. It is natural to glue successively along $(N_1,N'_1)$ then $(N_2,N'_2)$ and so on. However, after the first step, the singularities belong to the same surface. self-gluing construction is made analogously, but in general it is not possible any more to shrink one side to make space. However, in this case, shrinking sufficiently $S'$ at first solves this issue (since all singularities $N'_i$ have negative degree). As before, there is a combinatorial choice for each pair which is fixed if the initial surfaces are with marked horizontal separatrices, and two gluings with the same combinatorial choices give surfaces in the same connected component. 

Computing the $Sp$ invariant of the new surface is easy. It is enough to consider simple gluings and self-gluings. 
\begin{enumerate}
\item For simple gluing, there are two cases: either the two singularities are even, or they are odd. In the first case, the Sp invariant of the new surface is clearly the sum of the Sp invariant of the two surfaces. 
In the second case, following Remark~\ref{rem:function} we first choose a suitable pairing of odd degree singularities: we consider pairings of the form $\{(P_1^-,P_1^+)\}\cup \mathcal{P}$ with $P_1^+=N$ for $S$ and $\{(P_1'^-,P_1'^+)\}\cup \mathcal{P}'$ with $P_1'^-=N'$. We consider the following pairing for the new surface
$$\{(P_1^-,P_1'^+)\}\cup \mathcal{P}\cup \mathcal{P}'.$$
Then, we easily see that, with these pairings, the Sp invariant of the new surface is the sum of the Sp invariant of $S$ and $S'$.

\item For self-gluing, the new surface has genus one more than the initial surface. We easily see that the Sp invariant does not change for any pairing when glued singularities are of even order, and for a pairing containing $(N,N')=(P_1^-,P_1^+)$ when the glued singularities have odd order. 
\end{enumerate}

\nocite*
\bibliographystyle{plain}
\bibliography{biblio}

\end{document}